\def\ar{\rightarrow}
\title{On the linearity of fundamental groups of rose and star graphs of groups}
\author{D. Tsipa\thanks{
		The research work was supported by the Hellenic Foundation for
		Research Innovation (HFRI) under the 3rd Call for HFRI PhD
		Fellowships.  (Fellowship Number: 5161) \\  \protect\includegraphics[width=0.17\textwidth]{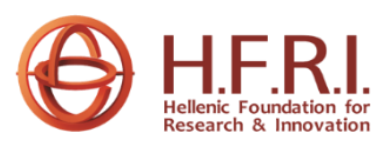} 	}} 
\affil[ ]{Department of Mathematics, University of the Aegean, Karlovassi, 832 00 Samos, Greece}
\affil[ ]{\textit {dtsipa@aegean.gr}}
\date{}
\pgfplotsset{compat=newest}
\newcommand{\N}{\mathbb{N}}
\newcommand{\Z}{\mathbb{Z}}
\newcommand*{\defeq}{\mathrel{\vcenter{\baselineskip0.5ex \lineskiplimit0pt
			\hbox{\scriptsize.}\hbox{\scriptsize.}}}%
	=}
\newcommand{\longeq}{\scalebox{2}[1]{=}}
\theoremstyle{definition}
\newtheorem{definition}{Definition}[section]
\newtheorem{theorem}{Theorem}[section]
\newtheorem{remark}{Remark}[section]
\newtheorem{proposition}{Proposition}[section]
\newtheorem{lemma}{Lemma}[section]
\newtheorem{example}{Example}[section]
\newtheorem*{theorem-non}{Examples}
\newtheorem{corollary}{Corollary}[section]
\newcommand\myew{\mathrel{\stackrel{\makebox[0pt]{\mbox{\normalfont\scriptsize($R_{4}$)}}}{\longeq}}}
\newcommand\myeq{\mathrel{\stackrel{\makebox[0pt]{\mbox{\normalfont\scriptsize($R_{5}$)}}}{\longeq}}}
\def\vdots{\vbox{\baselineskip=6pt \lineskiplimit=0pt 
		\kern6pt \hbox{.}\hbox{.}\hbox{.}}}
\def\ar{\rightarrow}
\def\g{\gamma}
\def\<{\langle}
\def\>{\rangle}
\begin{document}
	
\maketitle	
\begin{abstract}
	Let $G$ be a fundamental group of a graph of group where the graph is a rose or a star graph and the vertex groups are free groups, free abelian groups or right-angled Artin groups. We prove the linearity of $G$ over $\mathbb{Z}$  under various conditions on its edge groups.
\end{abstract}	
\section{Introduction}
A group $G$ is \emph{linear} if it admits a faithful representation into some matrix group $GL_{n}(K)$ for some commutative ring with unity $K$. Although the theory about linear groups is a well developed theory  it remains difficult to actually recognize linear groups. For example, although in 1988 Lubotzky gave necessary and sufficient conditions for a group to be linear over $\mathbb{C}$ (\cite{Lubotzky}), proving that a group satisfy these conditions appears to be not an easy job.

The family of groups for which we will investigate linearity are fundamental groups of graphs of groups. In geometric group theory, a graph of groups is an object consisting of a collection of groups indexed by the vertices and edges of a graph, together with a family of monomorphisms of the edge groups into the vertex groups. There is a unique group, called the fundamental group, canonically associated to each finite connected graph of groups. It admits an orientation-preserving action on a tree: the original graph of groups can be recovered from the quotient graph and the stabilizer subgroups. This theory, commonly referred to as Bass–Serre theory, is due to the work of Hyman Bass and Jean-Pierre Serre. (see \cite{Serre},\cite{Bass}).

 Nisnevi\v c was the first one who investigated the linearity of fundamental groups of graphs of groups (see \cite{Nisne}) and proved that the free product of linear groups is linear. Later, Shalen worked on the linearity of amalgamated free products of groups (see \cite{Shalen}) and proved that even if the amalgamated subgroup is cyclic, the product can be non-linear. In the positive direction, he proved that any group that can be built up from free
groups by successively forming free products with amalgamation, where the amalgamated subgroup is a maximal cyclic subgroup of each factor, is linear.

\par Let $K$ be the ring of integers $\mathbb{Z}$. The linearity of right-angled Artin groups was proved in \cite{Davis} and \cite{HsuWise}. It has been proved that certain families of hyperbolic groups can be embedded as subgroups of right-angled Artin groups and therefore are linear (see \cite{Agol} and \cite{Wise}). In \cite{Mrv}, Metaftsis, Raptis and Varsos proved that an HNN-extension of a finitely generated free abelian group is $\mathbb{Z}$-linear if and only if it is residually finite. The idea of the proof was to embed a certain finite index subgroup of the HNN-extension into a larger group which in turn has a finite index subgroup that is a right-angled Artin group. Since linearity is closed under taking finite extensions of subgroups, this implies that the original group is linear. We will generalize this idea in order to prove the linearity of fundamental groups of more general graphs of groups and also fill a gap in the proof of the main Theorem (Theorem 3.1) of \cite{Mrv}. More specifically,  we will prove the linearity over $\Z$ of the fundamental groups of rose and star graphs of groups that have cyclic edge groups while vertex groups are free groups, free abelian groups or right-angled Artin groups. Moreover, using a similar technique we will also prove that the amalgamated free product of two free abelian groups is  $\mathbb{Z}$-linear (Theorem \ref{amalfreeprod}).

\section{Preliminaries}	
\subsection{Graphs of groups and their fundamental groups}
Let $\Gamma$ be a graph. We denote the set of vertices and the set of edges of $\Gamma$ with $V(\Gamma)$ and $E(\Gamma)$ respectively. We denote the inverse of an edge $e\in E(\Gamma)$ with $\bar{e}$ and the initial and terminal vertices of $e$ by $\alpha(e)$ and $\omega(e)$ respectively. Obviously $\alpha(e)=\omega(\bar{e})$ for every $e\in E(\Gamma)$.
\begin{definition}
	A\emph{ graph of groups} $(\mathcal{G},\Gamma)$ consists of a connected graph $\Gamma$, a vertex group $G_{v}$ for each vertex $v\in V(\Gamma)$, an edge group $G_{e}$ for each edge $e\in E(\Gamma)$, and monomorphisms $$\{\alpha_{e}:G_{e}\to G_{\alpha(e)} | e\in E(\Gamma)\}.$$ We require in addition that $G_{e}=G_{\bar{e}}$.
\end{definition}
\begin{remark}
	For an edge $e\in E(\Gamma)$, instead of the monomorphism $\alpha_{e}:G_{e}\to G_{\alpha(e)}$ we sometimes use the monomorphism $\omega_{e}:G_{e}\to G_{\omega(e)}$ defined by $\omega_{e}=\alpha_{\bar{e}}$.
\end{remark}
Denote by $F(\mathcal{G},\Gamma)$ the factor group of the free product of all groups $G_{v}$, $v\in V(\Gamma)$ and the free group $E$ with basis $\{t_{e} | e\in E(\Gamma)\}$ by the normal closure of the subset $\{t_{e}\alpha_{e}(g)t_{e}^{-1}(\alpha_{\bar{e}}(g))^{-1}\}\cup  \{t_{e}t_{\bar{e}}\}$, where $e\in E(\Gamma), g\in G_{e}$. Notice that $F({\cal G},\Gamma)$ is an HNN-extension of $*G_v$ with one stable letter for every edge pair. Also notice that any path in $\Gamma$ can be regarded as an element of $E$ and hence an element of $F({\cal G}, \Gamma)$. The group $F({\cal G},\Gamma)$ is called the \emph{path group} of the graph of groups.
\begin{definition}
	Let $(\mathcal{G},\Gamma)$ be a graph of groups and let $P$ be a vertex of the graph $\Gamma$. The \emph{fundamental group $\pi_{1}(\mathcal{G},\Gamma,P)$ of the graph of groups $(\mathcal{G},\Gamma)$ with respect to the vertex $P$ }is the subgroup of the group $F(\mathcal{G},\Gamma)$ consisting of all elements of the form $g_{0}t_{e_{1}}g_{1}t_{e_{2}}\dots t_{e_{n}}g_{n}$, where $e_{1}e_{2}\dots e_{n}$ is a closed path in $\Gamma$ with the initial vertex $P$, $g_{0}\in G_{P}$, $g_{i}\in G_{\omega(e_{i})}$, for all $i=1,\dots,n$.
\end{definition}
\begin{definition}
	Let $(\mathcal{G},\Gamma)$ be a graph of groups and let $T$ be a maximal subtree of the graph $\Gamma$. The \emph{fundamental group $\pi_{1}(\mathcal{G},\Gamma,T)$ of the graph of groups $(\mathcal{G},\Gamma)$ with respect to the subtree $T$} is the factor group $F(\mathcal{G},\Gamma)$ by the normal closure of the set of elements $t_{e}$, $e\in E(T)$.
\end{definition}

The above two definitions yield isomorphic groups.
\begin{theorem}
	The fundamental groups $\pi_{1}(\mathcal{G},\Gamma
	,P)$ and $\pi_{1}(\mathcal{G},\Gamma,T)$ are isomorphic for any choice of the vertex $P$ and any choice of the maximal tree $T$ in the graph $\Gamma$.
\end{theorem}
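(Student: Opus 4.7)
The plan is to construct an explicit isomorphism between the two groups via a map on $F(\mathcal{G},\Gamma)$ that factors through the normal closure of the tree edges and lands inside the loops at $P$. Fix $P$ and $T$, and for each vertex $v\in V(\Gamma)$ let $\tau_{v}\in F(\mathcal{G},\Gamma)$ denote the reduced path in $T$ from $P$ to $v$, written as a product of stable letters $t_{e}$ with $e\in E(T)$; by convention $\tau_{P}=1$.

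The first step is to define $\psi:F(\mathcal{G},\Gamma)\to \pi_{1}(\mathcal{G},\Gamma,P)$ on generators by
$$\psi(g)=\tau_{v}\,g\,\tau_{v}^{-1}\quad (g\in G_{v}),\qquad \psi(t_{e})=\tau_{\alpha(e)}\,t_{e}\,\tau_{\omega(e)}^{-1}.$$
Each image is a closed loop based at $P$, so $\psi$ takes values in $\pi_{1}(\mathcal{G},\Gamma,P)$. The main work is to verify that $\psi$ respects the defining relations of $F(\mathcal{G},\Gamma)$. For the relation $t_{e}t_{\bar e}=1$ the inner $\tau_{\omega(e)}^{-1}\tau_{\omega(e)}$ cancels and the outer factors match. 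For $t_{e}\alpha_{e}(g)t_{\bar e}=\alpha_{\bar e}(g)$, the three inner pairs of tree-path factors again telescope, reducing the image to $\tau_{\alpha(e)}(t_{e}\alpha_{e}(g)t_{\bar e})\tau_{\alpha(e)}^{-1}=\tau_{\alpha(e)}\alpha_{\bar e}(g)\tau_{\alpha(e)}^{-1}=\psi(\alpha_{\bar e}(g))$, using the same relation already holding in $F(\mathcal{G},\Gamma)$. Moreover, for $e\in E(T)$ one has $\tau_{\omega(e)}=\tau_{\alpha(e)}t_{e}$, so $\psi(t_{e})=1$; hence $\psi$ kills the normal closure $N$ of $\{t_{e}:e\in E(T)\}$ and descends to $\bar\psi:\pi_{1}(\mathcal{G},\Gamma,T)\to \pi_{1}(\mathcal{G},\Gamma,P)$.

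In the opposite direction, the inclusion $\pi_{1}(\mathcal{G},\Gamma,P)\hookrightarrow F(\mathcal{G},\Gamma)$ composed with the projection onto $F(\mathcal{G},\Gamma)/N$ yields a homomorphism $\iota:\pi_{1}(\mathcal{G},\Gamma,P)\to \pi_{1}(\mathcal{G},\Gamma,T)$. To see that $\bar\psi$ and $\iota$ are mutually inverse, take a typical element $g_{0}t_{e_{1}}g_{1}\cdots t_{e_{n}}g_{n}$ of $\pi_{1}(\mathcal{G},\Gamma,P)$ coming from a closed path $e_{1}\cdots e_{n}$ at $P$, and set $v_{i}=\omega(e_{i})$ so that $v_{0}=v_{n}=P$; the successive factors $\tau_{v_{i}}^{-1}\tau_{v_{i}}$ telescope and the outer $\tau_{v_{0}},\tau_{v_{n}}$ are trivial, yielding $\bar\psi\circ\iota=\mathrm{id}$. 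Conversely, every $\tau_{v}$ is a word in the $t_{e}$ with $e\in E(T)$, hence equals $1$ modulo $N$, so $\psi(x)\equiv x\pmod{N}$ on each generator of $F(\mathcal{G},\Gamma)$, giving $\iota\circ\bar\psi=\mathrm{id}$.

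I expect the main obstacle to be the bookkeeping in defining and checking well-definedness of $\psi$: one must verify compatibility with both families of relations in $F(\mathcal{G},\Gamma)$ simultaneously, and confirm that the $\tau_{v}$'s behave coherently across all edges of $T$ (so that $\tau_{\omega(e)}=\tau_{\alpha(e)}t_{e}$ regardless of which of the two endpoints is closer to $P$ in $T$). Once this is established, the telescoping arguments proving $\bar\psi\circ\iota=\mathrm{id}$ and $\iota\circ\bar\psi=\mathrm{id}$ are essentially automatic, and the independence from the choices of $P$ and $T$ follows.
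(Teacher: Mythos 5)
The paper offers no argument for this theorem at all---it simply refers the reader to Bogopolski and Serre---so you are on your own here, and your overall strategy (conjugate each $G_v$ and each $t_e$ by the tree geodesics $\tau_v$, verify the relations, and invert against the natural projection $F(\mathcal G,\Gamma)\to F(\mathcal G,\Gamma)/N$) is exactly the standard proof from those references. The two composite computations $\bar\psi\circ\iota=\mathrm{id}$ and $\iota\circ\bar\psi=\mathrm{id}$ are fine as you describe them, \emph{granted} that $\psi$ is well defined.

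That is precisely where the proof breaks: the step you yourself single out as the main obstacle---compatibility of $\psi$ with the edge relations---fails as written, and the claimed telescoping is false. Since $\alpha_e(g)\in G_{\alpha(e)}$, your definition forces $\psi(\alpha_e(g))=\tau_{\alpha(e)}\alpha_e(g)\tau_{\alpha(e)}^{-1}$, so that $\psi(t_e)\psi(\alpha_e(g))\psi(t_e)^{-1}=\tau_{\alpha(e)}t_e\,\tau_{\omega(e)}^{-1}\tau_{\alpha(e)}\,\alpha_e(g)\,\tau_{\alpha(e)}^{-1}\tau_{\omega(e)}\,t_e^{-1}\tau_{\alpha(e)}^{-1}$: the inner factors $\tau_{\omega(e)}^{-1}\tau_{\alpha(e)}$ do not cancel. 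Moreover $\psi(\alpha_{\bar e}(g))=\tau_{\omega(e)}\alpha_{\bar e}(g)\tau_{\omega(e)}^{-1}$, because $\alpha_{\bar e}(g)$ lies in $G_{\alpha(\bar e)}=G_{\omega(e)}$, not the $\tau_{\alpha(e)}$-conjugate your computation ends with. The root cause is an orientation clash already latent in the paper's conventions: the relation $t_e\alpha_e(g)t_e^{-1}=\alpha_{\bar e}(g)$ conjugates the image in $G_{\alpha(e)}$ to the image in $G_{\omega(e)}$, whereas the path definition of $\pi_1(\mathcal G,\Gamma,P)$ places $G_{\omega(e)}$-elements immediately to the right of $t_e$. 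Taken literally the two are incompatible and the theorem would even be false: for a single edge $e$ from $u$ to $v$ with $P=u$ one gets $\pi_1(\mathcal G,\Gamma,P)=\langle G_u,\,t_eG_vt_e^{-1}\rangle$, and Britton's Lemma applied to the relation $t_e\alpha_e(g)t_e^{-1}=\alpha_{\bar e}(g)$ gives $t_eG_vt_e^{-1}\cap G_u=1$, so this subgroup is $G_u*G_v$, while $\pi_1(\mathcal G,\Gamma,T)=G_u*_{G_e}G_v$. Your map, the telescoping, and the rest of the argument all become correct verbatim once the edge relation is taken in the form $t_e\alpha_{\bar e}(g)t_e^{-1}=\alpha_e(g)$ (equivalently $t_e\omega_e(g)t_e^{-1}=\omega_{\bar e}(g)$, which is Serre's convention). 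You need to fix the orientation convention explicitly and carry out the relation check against it, rather than asserting that the tree-path factors telescope.
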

\begin{proof}
	For the proof of this Theorem see \cite{Bog} or \cite{Serre}.
\end{proof}
The isomorphism class of these groups is denoted by $\pi_{1}(\mathcal{G},\Gamma)$.
\begin{remark}
It is easy to see that if  $\Gamma$ is a segment then $\pi_{1}(\mathcal{G},\Gamma)$ is isomorphic to the free product of the two groups associated to the vertices of $\Gamma$ amalgamated over the subgroups $\alpha_{e}(G_{e})$ and $\alpha_{\bar{e}}(G_{e})$, where $G_{e}$ is the group associated to the single edge $e$ of $\Gamma$. \par If $\Gamma$ is a loop then $\pi_{1}(\mathcal{G},\Gamma)$ is isomorphic to the to the HNN-extension with the base $G_{P}$, where $P$ is the single vertex of $\Gamma$, and associated subgroups  $\alpha_{e}(G_{e})$ and $\alpha_{\bar{e}}(G_{e})$, where $G_{e}$ is again the group associated to the single edge $e$ of $\Gamma$. \par For an arbitrary graph of groups $({\mathcal{G},\Gamma})$ the fundamental group  $\pi_{1}(\mathcal{G},\Gamma)$  can be obtained from the fundamental group $\pi_{1}(\mathcal{G},T)$ by consecutive applications of HNN-extensions (the number of which is equal to the number of pairs of mutually inverse edges of $\Gamma$ not lying in the tree $T$), while the group $\pi_{1}(\mathcal{G},T)$ can be obtained from the fundamental group of a segment of groups by successive applications of the construction of an amalgamated product.  
\end{remark}
\subsection{Notation and conventions}	

Let $G$ be 	a group. \begin{enumerate}
	\item If $x,y\in G$ let $x^{y}=yxy^{-1}$.
	\item If $x,y\in G$, the commutator $[x,y]$ of $x$ and $y$ is defined to be $xyx^{-1}y^{-1}$.
	\item We denote by $G=\langle S_{G} | R_{G} \rangle $ a fixed presentation of $G$, where $ S_{G}$ is a generating set of $G$ and $R_{G}$ a set of defining relations of $G$. \\ If additionally, $G$ is  a right-angled Artin group then we assume that the presentation of $G$ is  the presentation on the standard RAAG generating set.
	\item If $S$ is a subset of $G$ then we denote by $S^{G}$ the normal closure of $S$ in $G$.
\end{enumerate} 
\section{The linearity of GBS-groups}
As we already stated, the purpose of this paper is to investigate the linearity of the fundamental groups of certain graphs of groups that have free abelian vertex groups. In the special case where the vertex and edge groups are infinite cyclic groups then the fundamental group of the graph of groups is called \emph{generalized Baumslag-Solitar group} or \emph{GBS-group}. GBS-groups have been the subject of recent research (see for example \cite{Fore,LevittA,Robi}).

Let $G$ be a group. A $G$-tree is a tree with a $G$-action by automorphisms, without inversions. A $G$-tree is proper if every edge stabilizer is a subgroup of the vertex stabilizers for the vertices that correspond to this edge. It is minimal if there is no proper $G$-invariant subtree and cocompact if the quotient graph is finite. The quotient of the action is the graph of groups with vertex groups the vertex stabilizers and edge groups the edge stabilizers.

Given a $G$-tree $X$, an element $g\in G$ is called elliptic if it stabilizes a vertex of $X$ and is hyperbolic otherwise. If $g$ is hyperbolic then there is a unique $G$-invariant line in $X$, the axis of $g$, on which $g$ acts as a translation. A subgroup $H$ of $G$ is elliptic if it fixes a vertex.

A generalized Baumslag-Solitar tree is a $G$-tree whose vertex stabilizers and edge stabilizers are all infinite cyclic. The group $G$ that arises is called generalized Baumslag-Solitar group (GBS group). The quotient graph of groups has all vertex and edge groups isomorphic to $\Z$ and the inclusion maps are multiplication by various non-zero integers.

The group $G$ is torsion free since any torsion group intersects each conjugate of a vertex group trivially which implies that is free and therefore trivial.  Moreover, all elliptic elements are commensurable.

GBS-groups are presented by labeled graphs, that is finite graphs where each oriented edge $e$ has label $\lambda_e\neq 0$. If $\Gamma$ is a finite graph with vertex and edge groups isomorphic to $\Z$ then we have a graph of groups with fundamental group a GBS group.  A presentation for a GBS-group $G$ can be obtained as follows. Choose a maximal tree $T\subset \Gamma$. There is one generator $g_v$ for every vertex of $\Gamma$ and one relator $t_e$ for every edge in $\Gamma\setminus T$ (non-oriented). Each non-oriented edge contributes  a relation in the presentation. The relations are
$$(g_{\alpha(e)})^{\lambda_e}=(g_{\alpha(\overline{e})})^{\lambda_{\overline{e}}}\ \ \mbox{if}\ \ \  e\in T$$
$$t_e(g_{\alpha(e)})^{\lambda_e}t_e^{-1}=(g_{\alpha(\overline{e})})^{\lambda_{\overline{e}}}\ \   \mbox{if}\ \ e\in\Gamma\setminus T$$
The above is the standard presentation of $G$ and the generating set is the standard generating set associated to $\Gamma$ and $T$.

Notice that the group $G$ represented by $\Gamma$ does not change if we change the sign of all labels near a vertex $v$ or the labels carried by a non-oriented edge. These are called admissible sign changes. Notice also that there may be infinitely many labelled graphs representing the same $G$.

A GBS-group is elementary if it is isomorphic to $\Z$, $\Z^2$ or the Klein bottle $K=\<a,t\mid tat^{-1}=a^{-1}\>=\<x,y\mid x^2=y^2\>.$ These groups have special properties.

If a GBS-group $G$ is non-elementary then there is a modular homomorphism (introduced by Kropholler in \cite{Krop}) $\Delta_G: G\ar {\mathbb Q}^*$ associated to $G$.  Given any non-trivial elliptic element $a$, there is a non-trivial relation $ga^mg^{-1}=a^n$. Then $\Delta_G(g)=m/n$.  Notice that $m,n$ may depend on the choice of $a$ but $m/n$ does not.  If $g$ is elliptic, $\Delta_G(g)=1$.  Notice that the existence of a relation $th^mt^{-1}=h^n$ implies that $h$ is elliptic since it satisfies $|m|\ell(h)=|n|\ell(h)$ where $\ell(h)$ is the translation length of $h$.  Remember that the translation length of an element $g$ is $\ell(g)=\liminf_n\frac{d(x,g^nx)}{n}$ which obviously implies that congugate elements have the same translation length.

We may also define $\Delta_G$ directly in terms of loops in $\Gamma$.  If $\g\in\pi_1(\Gamma)$ is represented by $e_1e_2\ldots e_m$ then $\Delta_G(\g)=\prod\limits_{i=1}^{m}\frac{\lambda_{e_i}}{\lambda_{\overline{e}_i}}.$ Hence $\Delta_G$ is trivial if $\Gamma$ is a tree. 

Also $\Delta_G$ is trivial if and only if the center of $G$ is non-trivial. In this case the center is cyclic and it contains only elliptic elements. A non-elementary GBS is \emph{unimodular} if the image of $\Delta_G$ is contained in $\{1,-1\}$. This is equivalent to $G$ having a normal infinite cyclic subgroup and also to $G$ being virtually $F_n\times{\mathbb \Z}$ for some $n\ge 2$ (see \cite{LevittB}).

Let $e \in E(\Gamma)\setminus E(T)$. By reading along the unique path in T joining $v =\alpha(e)$ and $u =\omega(e)$, we obtain a
relation $ g_{v}^{p_{1}(e)}=g_{u}^{p_{2}(e)}$, where $p_{1}(e)$ and $p_{2}(e)$ are the respective products of the labels of the initial and final  edges in the path. We say that $e$ is \emph{$T$-dependent} or \emph{skew $T$-dependent} if $$\frac{p_{1}(e)}{p_{2}(e)}=\frac{\lambda_{e}}{\lambda_{\bar{e}}} \mbox{ or} -\frac{\lambda_{e}}{\lambda_{\bar{e}}}.$$
respectively. If $e$ is a loop, then by convention, $e$ is $T$-dependent or skew $T$-dependent if $\lambda_{e} =\lambda_{\bar{e}}$ or $-\lambda_{\bar{e}}$,
respectively.

If every non-tree edge is $T$-dependent, $G$ is called tree dependent. If every non-tree edge is
$T$-dependent or skew $T$-dependent, with at least one non-tree edge of the latter type, then $G$ is said to be skew tree dependent. It turns out that the properties of tree dependence and skew tree dependence
are independent of the choice of maximal subtree $T$.

Let $Z(G)$ and $C(G)$ denote the center and the maximum cyclic normal subgroup of $G$ (called the \emph{cyclic radical}). Clearly $C(G)$ contains $Z(G)$. 
\begin{proposition}[see \cite{Delga}] 
	Let $G$ be a non-elementary GBS-group. Then the following statements are equivalent.
	\begin{enumerate}
		\item $G$ is tree dependent, respectively, skew tree dependent.
		\item $Im(\Delta_G)=\{1\}$, respectively $Im(\Delta_G)=\{-1,1\}.$
		\item $Z(G)\neq 1$, respectively, $1=Z(G)<C(G).$
		\end{enumerate}
\end{proposition}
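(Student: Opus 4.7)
The plan is to establish the chain of equivalences by proving (1)$\Leftrightarrow$(2) via a direct computation of $\Delta_G$ on the stable letters associated with non-tree edges, and then deducing (2)$\Leftrightarrow$(3) from the two facts already recalled in the text about $\Delta_G$ and the centre/cyclic radical of $G$.

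For (1)$\Leftrightarrow$(2), I would fix a maximal subtree $T$ and first observe that $\mathrm{Im}(\Delta_G)$ is generated by $\{\Delta_G(t_e):e\notin E(T)\}$, since $\Delta_G$ vanishes on all elliptic elements (hence on each vertex group $G_v$), and these together with the $t_e$'s generate $G$. For a non-tree edge $e$ with $v=\alpha(e)$ and $u=\omega(e)$, the HNN relation $t_eg_v^{\lambda_e}t_e^{-1}=g_u^{\lambda_{\bar e}}$ together with the tree path relation $g_v^{p_1(e)}=g_u^{p_2(e)}$ yields, after raising both sides of the first to the $p_2(e)$-th power and rewriting the right-hand side in terms of $g_v$,
\[
t_e\,g_v^{\lambda_e\,p_2(e)}\,t_e^{-1}=g_v^{\lambda_{\bar e}\,p_1(e)},
\]
so $\Delta_G(t_e)=\dfrac{\lambda_e\,p_2(e)}{\lambda_{\bar e}\,p_1(e)}$. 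This equals $+1$ (resp.\ $-1$) precisely when $e$ is $T$-dependent (resp.\ skew $T$-dependent); the loop-edge convention is consistent since in that case $p_1(e)=p_2(e)=1$. Collecting these values over all non-tree edges gives both ``tree dependent'' and ``skew tree dependent'' versions of (1)$\Leftrightarrow$(2).

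For (2)$\Leftrightarrow$(3), the text already records that $\Delta_G$ is trivial iff $Z(G)\neq 1$, which handles the tree-dependent case. For the skew case, the recalled characterisation of unimodularity gives $\mathrm{Im}(\Delta_G)\subseteq\{\pm 1\}\Leftrightarrow C(G)\neq 1$; requiring in addition that equality $\mathrm{Im}(\Delta_G)=\{\pm 1\}$ hold (rather than $\{1\}$) is then exactly $Z(G)=1$, i.e.\ $1=Z(G)<C(G)$.

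The one delicate step is the explicit conjugation computation yielding $\Delta_G(t_e)=\frac{\lambda_e p_2(e)}{\lambda_{\bar e}p_1(e)}$: one must carefully match the conventions for ``initial'' and ``final'' labels along the tree path defining $p_1(e)$ and $p_2(e)$, and separately check the loop-edge case. Once this is in hand, everything else reduces to assembling previously stated facts.
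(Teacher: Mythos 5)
Your argument is correct, but there is nothing in the paper to compare it against: the proposition is stated with a pointer to Delgado--Robinson--Timm \cite{Delga} and no proof is given. What you supply is a self-contained derivation assembled from the background facts the section records, and it hangs together. Your key computation --- combining $t_eg_v^{\lambda_e}t_e^{-1}=g_u^{\lambda_{\bar e}}$ with $g_v^{p_1(e)}=g_u^{p_2(e)}$ to get $t_e g_v^{\lambda_e p_2(e)}t_e^{-1}=g_v^{\lambda_{\bar e}p_1(e)}$, hence $\Delta_G(t_e)=\lambda_e p_2(e)/(\lambda_{\bar e}p_1(e))$ --- is exactly the manipulation the paper performs immediately \emph{after} the proposition (there used to exhibit a non-linear $BS(m,n)$ subgroup when $\Delta_G$ is not unimodular), and it is consistent with the loop formula $\Delta_G(\gamma)=\prod_i\lambda_{e_i}/\lambda_{\bar e_i}$, so the identification of $T$-dependence with $\Delta_G(t_e)=1$ and skew $T$-dependence with $\Delta_G(t_e)=-1$ is right, including the loop-edge convention. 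The reduction of $(2)\Leftrightarrow(3)$ to the two recalled facts ($\Delta_G$ trivial iff $Z(G)\neq 1$; unimodularity iff existence of a normal infinite cyclic subgroup, i.e.\ $C(G)\neq 1$) is also fine, though be aware that those two facts are themselves stated without proof in the paper (the second cited to \cite{LevittB}), so your argument is a correct assembly of quoted results rather than a proof from first principles --- which is a reasonable standard here, given that the paper itself only cites the result. The one point worth making explicit is that $\mathrm{Im}(\Delta_G)$ is generated by the values $\Delta_G(t_e)$ over non-tree edges because $\Delta_G$ is a homomorphism killing all elliptic elements and $G$ is generated by the vertex groups together with these $t_e$; you state this and it is exactly what makes the passage from ``each edge'' to ``the whole image'' legitimate, including the requirement that at least one skew edge forces $-1$ to actually lie in the image.
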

As stated above in case where the image of $\Delta_G$ is contained in $\{1,-1\}$ we have that  $G$ is virtually $F_n\times{\mathbb \Z}$ for some $n\ge 2$ and therefore is $\mathbb{\Z}-$linear. On the other hand assume that  $\Delta_G$ is not contained in $\{1,-1\}$ and therefore according the the above Proposition there exists an edge $e \in E(\Gamma)\setminus E(T)$ such that $$\frac{p_{1}(e)}{p_{2}(e)}\neq\pm \frac{\lambda_{e}}{\lambda_{\bar{e}}}.$$
Now let $g_{v}$ and $g_{u}$  be the generators of the vertex groups associated to the vertices $v=\alpha(e)$ and $u=\omega(e)$. Then we have the following relations 
$$(g_{v})^{p_{1}(e)}=(g_{u})^{p_{2}(e)}\ \ \mbox{and }  
t_e(g_{v})^{\lambda_e}t_e^{-1}=(g_{u})^{\lambda_{\overline{e}}}$$ 
and thus the relation $t_e(g_{v})^{\lambda_{e}p_{2}(e)}t_e^{-1}=(g_{v})^{\lambda_{\overline{e}}p_{1}(e)}$. But the above inequality then implies that there is a subgroup of $G$ that is isomorphic to a Baumslag-Solitar group $BS(m,n)$ where $|m|\neq |n|$ which is known not to be linear over $\mathbb{Z}$. Therefore we have the following result.
\begin{proposition}
	A non-elementary GBS-group is $\mathbb{Z}-$ linear if and only if $Im(\Delta_G)\subseteq \{-1,1\}$. 
\end{proposition}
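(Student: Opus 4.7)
The plan is to establish the two directions separately, using as black boxes the structural characterizations proven in the preceding Proposition attributed to Delgado and in the cited work of Levitt.

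For the sufficiency direction, suppose $\mathrm{Im}(\Delta_G)\subseteq\{-1,1\}$. By the preceding Proposition this means $G$ is either tree dependent or skew tree dependent, and by the cited result of Levitt (\cite{LevittB}) the group $G$ is virtually $F_{n}\times\Z$ for some $n\ge 2$. Free groups embed in $SL_{2}(\Z)$, and direct products of $\Z$-linear groups are $\Z$-linear (block-diagonal representation), so $F_{n}\times\Z$ is $\Z$-linear. Since the class of $\Z$-linear groups is closed under passing to overgroups of finite index (the standard induced-representation argument sends a faithful representation of a finite-index subgroup to a faithful representation of the ambient group, preserving the ring $\Z$), we conclude that $G$ is $\Z$-linear.

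For the necessary direction I would argue by contrapositive: assume $\mathrm{Im}(\Delta_G)\not\subseteq\{-1,1\}$. Then by the Delgado Proposition there is a non-tree edge $e\in E(\Gamma)\setminus E(T)$ with $\frac{p_{1}(e)}{p_{2}(e)}\neq\pm\frac{\lambda_{e}}{\lambda_{\bar e}}$, equivalently $|\lambda_{e}p_{2}(e)|\neq|\lambda_{\bar e}p_{1}(e)|$. Combining the two relations displayed by the author gives
\[
t_{e}\,g_{v}^{\lambda_{e}p_{2}(e)}\,t_{e}^{-1}=g_{v}^{\lambda_{\bar e}p_{1}(e)}.
\]
Write $m=\lambda_{e}p_{2}(e)$ and $n=\lambda_{\bar e}p_{1}(e)$, so $|m|\neq |n|$. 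I would then consider the subgroup $H=\langle g_{v},t_{e}\rangle\le G$. Because $g_{v}$ is elliptic in the Bass--Serre tree of $G$ and $t_{e}$ is the stable letter associated to a non-tree edge, Britton's lemma applied to the HNN-extension presentation of $G$ (or equivalently normal forms in Bass--Serre theory) ensures that $H$ admits no relations beyond the single HNN relation above. Thus $H\cong BS(m,n)$. Since $|m|\neq |n|$, the group $BS(m,n)$ is non-Hopfian, hence not residually finite, and by Mal'cev's theorem finitely generated $\Z$-linear groups are residually finite, so $BS(m,n)$ is not $\Z$-linear. Linearity passing to subgroups then forces $G$ not to be $\Z$-linear.

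The main technical point — and the only one not already written out by the author in the discussion preceding the statement — is the verification that $H=\langle g_{v},t_{e}\rangle$ really is the full Baumslag--Solitar group $BS(m,n)$ rather than some proper quotient; for this one must use that $g_{v}$ generates a vertex group $\cong\Z$ (no extra torsion or identifications in the base) and that $t_{e}$ is a stable letter not conjugating any further power of $g_{v}$ into $\langle g_{v}\rangle$ within the Bass--Serre action, which is exactly Britton's lemma combined with the fact that edge stabilizers in a GBS-tree are infinite cyclic and $\langle g_{v}\rangle$ is a vertex stabilizer. Everything else reduces to citations of results already introduced in the section.
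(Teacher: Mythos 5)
Your strategy coincides with the paper's: sufficiency via Levitt's theorem that a unimodular non-elementary GBS group is virtually $F_n\times\Z$ together with closure of $\Z$-linearity under finite extensions, and necessity by extracting from a non-(skew-)tree-dependent edge the relation $t_e g_v^{m}t_e^{-1}=g_v^{n}$ with $|m|\neq|n|$ and invoking a Baumslag--Solitar obstruction. The paper treats the non-$\Z$-linearity of $BS(m,n)$ for $|m|\neq|n|$ as a known fact; you try to justify it, and your justification is wrong in an important case. The chain ``$|m|\neq|n|\Rightarrow BS(m,n)$ is non-Hopfian $\Rightarrow$ not residually finite'' fails whenever $\min(|m|,|n|)=1$: the group $BS(1,2)$ is residually finite and Hopfian, and it genuinely occurs here (a single loop with labels $1$ and $2$ gives a non-elementary GBS group with $Im(\Delta_G)=\langle 1/2\rangle$). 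In that case the correct obstruction is different: $BS(1,k)$ with $|k|\ge 2$ is metabelian but not polycyclic, while by Mal'cev every solvable subgroup of $GL_N(\Z)$ is polycyclic; so it is not $\Z$-linear even though it is residually finite (indeed it is linear over $\Q$). Your argument as written only covers the case $\min(|m|,|n|)\ge 2$, where neither integer divides the other is not even needed --- there non-residual-finiteness does apply when neither $|m|$ divides $|n|$ nor conversely, but e.g.\ $BS(2,4)$ again needs a separate argument. In short, the ``known fact'' requires a case analysis that your single implication does not provide.

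A second, smaller gap is the step you yourself flag as the main technical point: the claim that $H=\langle g_v,t_e\rangle$ is isomorphic to $BS(m,n)$ on the nose. Britton's lemma does not deliver this when $e$ is not a loop, because the associated subgroups of the edge $e$ in $G$ are $\langle g_v^{\lambda_e}\rangle$ and $\langle g_u^{\lambda_{\bar e}}\rangle$, not $\langle g_v^{m}\rangle$ and $\langle g_v^{n}\rangle$ with $m=\lambda_e p_2(e)$. A word that is Britton-reduced in $BS(m,n)$ can still contain a pinch $t_e g_v^{j}t_e^{-1}$ with $\lambda_e\mid j$ but $m\nmid j$; for instance $t_e g_v^{j}t_e^{-1}$ then equals an element of $\langle g_u\rangle$ and hence commutes with $g_v^{p_1(e)}$ in $G$, a relation that is generally nontrivial in $BS(m,n)$. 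So the natural surjection $BS(m,n)\twoheadrightarrow H$ need not be injective, and $H$ may be a proper quotient (the paper makes the same unsupported identification). The conclusion can be rescued either by working only with the honest relation $t_e g_v^{m}t_e^{-1}=g_v^{n}$, $g_v$ of infinite order, and showing directly that no faithful representation into $GL_N(\Z)$ can satisfy it, or by citing the known classification of residually finite / $\Z$-linear GBS groups; but as written this step is not proved.
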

The linearity of elementary GBS-groups is trivial.

\section{Results}	
\subsection{The main lemmas}

Let  $K=\langle S_{K} | R_{K} \rangle$ be a finitely generated right-angled Artin group with $S_{K}=\{k_{j}\}$, $j=1,\dots m$.  For $i\in \{1,\dots,n\}$, let $A_{i},B_{i}$ be subsets of $S_{K}$ such that $|A_{i}|=|B_{i}|$ and $f_{i}$ be $1-1$ maps $f_{i} :A_{i} \to B_{i}$ such that each $f_{i}$ can be extended to an isomorphism (also denoted $f_{i}$) $f_{i} :H_{i} \to f(H_{i})$, where $H_{i}$ and $f(H_{i})$ are the subgroups of $K$ generated by $A_{i}$ and $B_{i}$ respectively.
\par For each $i=1,\dots,n$ we can define the HNN-extension $$K_{i}^{*}=\langle S_{K},t_{i} | R_{K}, t_{i}a_{i}t_{i}^{-1}=f_{i}(a_{i}), a_{i}\in A_{i}\rangle$$ with $K$ as the base group, $t_{i}$ as  stable letters and $H_{i}, f(H_{i})$ as associated subgroups. In each $K_{i}^{*}$ we have that $t_{i}$ acts as a partial permutation by sending $a_{i}$ to $f(a_{i})$, for all $a_{i}\in A_{i}$, that is we can extend this action to a permutation on the set $S_{K}$.

We can extend this action to an action on the set  $S_{K}$ that is an automorphism of finite order defined by a permutation $\sigma$ assuming that if $\sigma$ is written as product of disjoint cycles then each cycle is defined by generators $k_{j}$ that generate a free abelian subgroup of $K$. Let $\bar{f}_{i}$ be that extension with  $ord(\bar{f}_{i})=r_{i}$ for each $i=1,\dots,n$.  Notice here that each $\bar{f_{i}}$ is actually an automorphism of the defining graph of the RAAG $K$. 
\par We will now provide two lemmas regarding the linearity of the group generated by $K$ and the stable letters of the HNN-extensions  $K_{i}^{*}$. In the first Lemma, we assume that the stable letters $\{t_{i}\}$  generate a right-angled Artin group and that the automorphisms $\bar{f}_{i}$ are mutually commutative. In the second Lemma we assume that the stable letters $\{t_{i}\}$ generate a free group while the automorphisms $\bar{f}_{i}$ generate a finite subgroup of $Aut(K)$. In the following sections, we will use these Lemmas in order prove $\Z-$linearity for fundamental groups of certain graph of groups with free abelian vertex groups.
\begin{lemma}\label{Lemma1lin}
	Let  $K$, $K_{i}^{*}$ be the groups described above and let $\Lambda$ be a finitely generated right-angled Artin group  $\Lambda=\langle S_{\Lambda} | R_{\Lambda} \rangle$  with $S_{\Lambda}=\{t_{i}\}$ be its generating set. If the automorphisms $\bar{f}_{i}$ are mutually commutative, for all $i,j\in \{1,\dots,n\}$ then the group $$G=\langle S_{K}, S_{\Lambda} | \hspace{1mm} R_{K},R_{\Lambda},  t_{i}a_{i}t_{i}^{-1}=f_{i}(a_{i}), a_{i}\in A_{i} \rangle $$ is $\mathbb{Z}-$linear.   
\end{lemma}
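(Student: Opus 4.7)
The plan is to follow the MRV-style strategy sketched in the introduction: embed $G$ into a larger group $\hat{G}$ that is virtually a right-angled Artin group, and then invoke the closure of $\mathbb{Z}$-linearity under subgroups and finite extensions. Since the $\bar{f}_{i}$ pairwise commute and each has finite order, the subgroup $F:=\langle \bar{f}_{1},\dots,\bar{f}_{n}\rangle\le\mathrm{Aut}(K)$ is a finite abelian group, and $\tilde{K}:=K\rtimes F$ is $\mathbb{Z}$-linear as a finite extension of the RAAG $K$ (Davis, Hsu--Wise). The crucial observation is that inside $\tilde{K}$ each partial isomorphism $f_{i}:H_{i}\to f(H_{i})$ becomes an inner automorphism, namely conjugation by $\bar{f}_{i}\in F\le \tilde{K}$.

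Next I would construct $\hat{G}$ by adjoining to $\tilde{K}$ new letters $s_{1},\dots,s_{n}$ subject to (i) $[s_{i},h]=1$ for every $h\in H_{i}$ and (ii) $[s_{i},s_{j}]=[s_{i},\bar{f}_{j}]=1$ whenever $(t_{i},t_{j})$ is an edge in the defining graph of $\Lambda$. Structurally, $\hat{G}$ is an iterated amalgamated product of $\tilde{K}$ with copies of $H_{i}\times\langle s_{i}\rangle$ along $H_{i}$, refined by the commutations in (ii). Define $\phi:G\to\hat{G}$ by $\phi(k)=k$ on $S_{K}$ and $\phi(t_{i})=\bar{f}_{i}s_{i}$. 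One checks that $\phi$ respects all defining relations of $G$: the HNN relation becomes
\[
\phi(t_{i})\phi(a_{i})\phi(t_{i})^{-1}=\bar{f}_{i}s_{i}a_{i}s_{i}^{-1}\bar{f}_{i}^{-1}=\bar{f}_{i}a_{i}\bar{f}_{i}^{-1}=\bar{f}_{i}(a_{i})=f_{i}(a_{i}),
\]
using that $s_{i}$ centralises $H_{i}\ni a_{i}$, while the $R_{\Lambda}$ relations translate to $[\bar{f}_{i}s_{i},\bar{f}_{j}s_{j}]=1$, which follows from the commutativity of $\bar{f}_{i}$, $\bar{f}_{j}$ in $F$ together with (ii).

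The remainder of the proof has two components, which are the principal obstacles. First, I would prove that $\phi$ is injective by a Britton-style argument, matching the HNN normal form of an element $g_{0}t_{i_{1}}^{\varepsilon_{1}}g_{1}\cdots t_{i_{n}}^{\varepsilon_{n}}g_{n}\in G$ against the amalgamated-product/HNN normal form of its image in $\hat{G}$ -- reduction in $\hat{G}$ must force reduction in $G$. Second, I would show $\hat{G}$ is $\mathbb{Z}$-linear by passing to the finite-index subgroup $\hat{G}_{0}:=\ker(\hat{G}\twoheadrightarrow F)$ (the projection sending $k\mapsto 1$, $s_{i}\mapsto 1$, $\bar{f}_{j}\mapsto\bar{f}_{j}$) and arguing that $\hat{G}_{0}$ embeds in a right-angled Artin group whose vertex set is $S_{K}$ together with the $F$-orbit of $\{s_{i}\}$, with commutation pattern inherited from $R_{K}$, $R_{\Lambda}$, and $[s_{i},H_{i}]=1$. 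The delicate point is ruling out unexpected relations arising from the interplay of the $\bar{f}_{j}$-action with the $s_{i}$'s; this is where the hypothesis that each $\bar{f}_{i}$ permutes $S_{K}$ in cycles of mutually commuting generators enters. Once both steps are established, $\hat{G}_{0}$ is $\mathbb{Z}$-linear as a subgroup of a RAAG, $\hat{G}$ is $\mathbb{Z}$-linear as a finite extension of $\hat{G}_{0}$, and $G$ is $\mathbb{Z}$-linear as a subgroup of $\hat{G}$ via $\phi$.
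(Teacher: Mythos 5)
Your overall strategy is the same as the paper's: factor each stable letter $t_i$ into an ``automorphism part'' acting on all of $S_K$ times a ``centralizing part'' commuting with $H_i$, embed $G$ into the enlarged group, pass to the kernel of a map onto a finite group, and recognize that kernel as (something commensurable with) a right-angled Artin group. The paper does exactly this, except one stable letter at a time: it adjoins a letter $\xi_1$ of \emph{infinite} order acting on $K$ by $\bar f_1$ and a letter $\zeta_1$ centralizing $H_1$, sends $t_1\mapsto\xi_1\zeta_1$, takes the index-$r_1$ kernel of $\tilde G_1\twoheadrightarrow\mathbb{Z}_{r_1}$, computes its presentation by Reidemeister--Schreier, and iterates. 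So the approach is not genuinely different; the issue is that your proposal stops exactly where the proof begins. The two steps you defer --- injectivity of $\phi$ and the identification of $\hat G_0$ with (a subgroup of) a RAAG --- are the entire mathematical content of the lemma, and the paper's proof consists almost entirely of carrying them out (a length/normal-form argument for injectivity, and an explicit Reidemeister--Schreier plus Tietze computation for the finite-index subgroup).

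Moreover, your particular packaging creates an obstacle the paper deliberately avoids. Your $\hat G$ is an iterated amalgamated product \emph{further quotiented} by the commutation relations in (ii); such a group has no ready-made normal form, so the promised ``Britton-style argument'' has nothing to run on --- reduction in $\hat G$ cannot be compared with reduction in $G$ until you establish a normal form for $\hat G$ itself. The paper sidesteps this by introducing one new letter at a time so that each enlarged group is an honest HNN extension over an identifiable base, where Britton's lemma genuinely applies, and by making $\xi_1$ commute with \emph{all} the remaining $t_i$ (not just the $\Lambda$-neighbours) so that the stable letters do not proliferate under Reidemeister--Schreier. A second, smaller discrepancy: by realizing the automorphism parts inside the finite group $F$ you force $\bar f_i^{\,r_i}=1$, whereas in the paper's construction $\xi_1^{r_1}$ survives as a nontrivial generator of the finite-index subgroup commuting with $K$; killing it is an extra collapse of your target group relative to the paper's, and is one more place where injectivity of $\phi$ could fail and must be checked rather than asserted. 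As it stands, the proposal is a correct outline of the known strategy but not a proof.
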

\begin{proof}
	Let's assume without loss of generality that $r_{1}\geq2$. We define $\tilde{G}_{1}$ to be the group with generating set $S_{\tilde{G}_{1}}=\{S_{K},\xi_{1},\zeta_{1},t_{2},\dots,t_{n}\}$ and set of relations $R_{\tilde{G}_{1}}$:
	\begin{itemize}
		\item[($R_{1}$)] The relations $R_{K}$ of the group $K$
		\item[($R_{2}$)] The relations $R_{\Lambda}$ of the  right-angled Artin group  $\Lambda$
		\item[($R_{3}$)] $t_{i}a_{i}t_{i}^{-1}=f_{i}(a_{i})$ for every $a_{i}\in A_{i}$ with $i\neq 1$
		\item[($R_{4}$)] $[t_{i},\zeta_{1}]=1$ for all $i\neq 1$ such that $[t_{i},t_{1}]\in R_{\Lambda}$ 
		\item[($R_{5}$)] $[t_{i},\xi_{1}]=1$ for all $i\neq 1$

		\item[($R_{6}$)] $[\zeta_{1},a_{1}]=1$ for all $a_{1}\in A_{1}$
		\item[($R_{7}$)] $\xi_{1}k_{j}\xi_{1}^{-1}=\bar{f_{1}}(k_{j})$, for $j=1,\dots,m$
	\end{itemize}
	\par The map $\phi_{1}:$ $G \to \tilde{G}_{1}$ with $\phi_{1}(k_{j})=k_{j}$ for every $k_{j}\in S_{K}$, $\phi_{1}(t_{i})=t_{i}$ for $i=2,\dots,n$ and $\phi_{1}(t_{1})=\xi_{1}\zeta_{1}$ defines a monomorphism. It is a homomorphism since $\phi_{1}$ preserves the relations of $G$. Indeed: 
	\begin{itemize}
		\item[$-$] Let $r\in R_{K}$. Then $\phi_{1}(r)=r$ which is a relator of $\tilde{G}$, since $\phi_{1}(k)=k$ for all $k\in K$.
		\item[$-$] Let $r\in R_{\Lambda}$ i.e $r$ is a relation of the form $t_{i}t_{j}=t_{j}t_{i}$. \\ \textit{Case 1}: Assume that $i,j\neq 1$. Then $\phi_{1}(r)=r$ which is a relator of $\tilde{G}$, since $\phi_{1}(t_{i})=t_{i}$ for $i=2,\dots,n$.
		\\ \textit{Case 2}: Let's now assume that one of $i,j$ is 1, say $i=1$. Then by using the relations  $[t_{j},\zeta_{1}]=1$ and $[t_{j},\xi_{1}]=1$ we have that
		$\phi_{1}([t_{1},t_{j}])=[\xi_{1}\zeta_{1},t_{j}]=\xi_{1}\zeta_{1}t_{j}\zeta_{1}^{-1}\xi_{1}^{-1}t_{j}^{-1}\myew \xi_{1}t_{j}\xi_{1}^{-1}t_{j}^{-1} \myeq t_{j}t_{j}^{-1}=1$.
		\item[$-$] Let $r$ be a relation of the form $t_{i}a_{i}t_{i}^{-1}=f_{i}(a_{i}), a_{i}\in A_{i}$.
		\\ \textit{Case 1}: Assume that $i\neq 1$. Then $\phi_{1}(r)=r$ which is a relator of $\tilde{G}$.
		\\ \textit{Case 2}: Let $i=1$, thus we have the relation $t_{1}a_{1}t_{1}^{-1}=f_{1}(a_{1}), a_{1}\in A_{1}$. This relation is mapped through $\phi_{1}$ to the relation $\xi_{1}\zeta_{1}a_{1}\zeta_{1}^{-1}\xi_{1}^{-1}=f_{1}(a_{1}) \xLeftrightarrow[]{(R_{4})} \xi_{1}a_{1}\xi_{1}^{-1}=f_{1}(a_{1})$ which holds in $\tilde{G}$ since $\bar{f_{1}}|_{H_{1}}=f_{1}$.
	\end{itemize} 
	\par Furthermore $\phi_{1}$ is 1-1. Indeed, notice that we can consider $G$ as an HNN-extension with base group $X=\langle K, t_{2},\dots, t_{n} \rangle$, $t_{1}$ the stable letter and let $\langle t_{1} \rangle $  act on the subgroup $H_{1}$ of $K$ according to the relations $t_{1}a_{1}t_{1}^{-1}=f_{1}(a_{1})$, $a_{1}\in A_{1}$ and on the group generated by $\{ t_{2},t_{3},\dots,t_{n}\}$ according to the relations in $R_{\Lambda}$ that contain $t_{1}$.
	
	Similarly we have that $\tilde{G}_{1}$ is an HNN-extension with base group the group generated by $\{ S_{K}, \zeta_{1},t_{2},\dots,t_{n} \} $ and  $\xi_{1}$ the stable letter. Now, let $$g=x_{0}t_{1}^{\epsilon_{1}}x_{1}\dots t_{1}^{\epsilon_{n}}x_{n}\in G, \hspace{3mm} n\geq1$$ with $(x_{0},t_{1}^{\epsilon_{1}},x_{1},\dots, t_{1}^{\epsilon_{n}},x_{n})$ be reduced and thus $g\neq 1_{G}$.  We have that $g$ has length  $l(g)=2n+1$. Then $$\phi_{1}(g)=x_{0}\phi_{1}(t_{1}^{\epsilon_{1}})x_{1}\dots \phi_{1}(t_{1}^{\epsilon_{n}})x_{n}=x_{0}(\xi_{1}\zeta_{1})^{\epsilon_{1}}x_{1}\dots (\xi_{1}\zeta_{1})^{\epsilon_{n}}x_{n}.$$ We claim that $l(\phi_{1}(g))=3n+1$. Indeed, let's assume that $\phi_{1}(g)$ contains a subword $w$ of the form $w=\xi_{1}\zeta_{1}g_{i}\zeta_{1}^{-1}\xi_{1}^{-1}$. If $g_{i}\notin A$ then we can not have reduction of the length of $w$, since $[\zeta_{1},a_{1}]=1$ only for $a_{1}\in A$. Now let's assume that $g_{i}=a_{1}\in A_{1}$. Then, since $[\zeta_{1},a_{1}]=1$, we would have the subword $w=\xi_{1}a_{1}\xi_{1}^{-1}$ and thus we have reduced the length of $w$ and hence the length of $\phi_{1}(g)$ by $2$. But that would also mean that that the word $g$ would contain the subword $t_{1}a_{1}t_{1}^{-1}$, $a_{1}\in A_{1}$, which is a contradiction since we assumed that $(x_{0},t_{1}^{\epsilon_{1}},x_{1},\dots, t_{1}^{\epsilon_{n}},x_{n})$ is reduced. Therefore, we have that $l(\phi_{1}(g))=3n+1$ and hence, since it has length greater than $1$, we have that $\phi_{1}(g)\neq 1_{\tilde{G}_{1}}$ and thus $\phi_{1}$ is 1-1.
	\par We now define the epimorphism $\theta_{1}:\tilde{G}_{1} \twoheadrightarrow \mathbb{Z}_{r_{1}}$ that maps $\xi_{1}$ to $1$ and the rest of the generators of $\tilde{G}_{1}$ to $0$. Let $N_{1}=Ker\theta_{1}$ and $U_{1}=\{1,\xi_{1},\xi_{1}^{2},\dots,\xi_{1}^{r_{1}-1}\}$ be a Schreier transversal for $N_{1}$. Then a generating set for $N_{1}$ consists of the non-trivial elements $\{us(\overline{us})^{-1}, u\in U_{1}, s\in S_{\tilde{G}_{1}}\}$, where $\bar{g}$ is the representative of $g$ in $U_{1}$ for every $g\in \tilde{G}_{1}$. Therefore we have that $S_{N_{1}}=\{k_{j}^{u},\zeta_{1}^{u},t_{i}^{u},\xi_{1}^{r_{1}}\}$, where $k_{j}\in S_{K}, i\in \{2,\dots,n\}$ and $u\in U_{1}$.
	\par From the Reidemeister-Schreier rewriting process, the relations of $N_{1}$ are the relators $uru^{-1}$, where $u\in U_{1}$ and $r\in R_{\tilde{G}_{1}}$ are the relators of $\tilde{G}_{1}$, rewritten in terms of generators of $N_{1}$.
	\begin{itemize}
		\item[($R^{u}_{1}$)]  Let $r\in R_{K}$ i.e. $r$ is a relator of the form $[k_{j},k_{j'}]$, $k_{j},k_{j'}\in S_{K}$. Then since $k_{j}^{u},k_{j'}^{u}$ belong to the generating set of $N_{1}$ for every $u\in U_{1}$ we have that $u[k_{j},k_{j'}]u^{-1}= [uk_{j}u^{-1},uk_{j'}u^{-1}]= [k_{j}^{u},k_{j'}^{u}]$ and thus we have that the relations of $K$ induce relations in $N_{1}$ that are commutators on its generating set.
		\item[($R^{u}_{2}$)] Similarly, if $r$ is of the form  $[t_{i},t_{j}]$, where $i,j\neq 1$ then $u[t_{i},t_{j}]u^{-1}= [t_{i}^{u},t_{j}^{u}]$ for all $i,j\in\{2,\dots,n\}$. 	
		\item[($R^{u}_{3}$)] Let $r$ be a relation of the form $t_{i}a_{i}t_{i}^{-1}=f_{i}(a_{i})$, where $i\neq 1$. Then we will get the relations $t_{i}^{u}a_{i}^{u}(t_{i}^{u})^{-1}=(f_{i}(a_{i}))^{u}$ for all $u\in U_{1}$.
		\item[($R^{u}_{4}$)] If $r$ is of the form  $[t_{i},\zeta_{1}]$, where $i\neq 1$, then $u[t_{i},\zeta_{1}]u^{-1}= [t_{i}^{u},\zeta_{1}^{u}]$ for all $i\in\{2,\dots,n\}$.

		\item[($R^{u}_{5}$)] Now let $r$ be of the form $[\xi_{1},t_{i}]$, $i=2,\dots,n$. 
		For $u=1$ we have the equality of the generators $t_{i}^{\xi_{1}}=t_{i}$. Now let $u=\xi_{1}$. Then we have $\xi_{1}^{2}t_{i}\xi_{1}^{-1}t_{i}^{-1}\xi_{1}^{-1}=$ $\xi_{1}^{2}t_{i}\xi_{1}^{-2}\xi_{1}t_{i}^{-1}\xi_{1}^{-1}$ and since $t_{i}^{\xi_{1}}=t_{i}$ we have that  $t_{i}^{\xi_{1}^{2}}=t_{i}$. Similarly, for all $u=\xi_{1}^{2},\dots,\xi_{1}^{r_{1}-2}$ we will end up in the equalities $t_{i}^{\xi_{1}^{r_{1}-1}}=t_{i}^{\xi_{1}^{r_{1}-2}}=\dots= t_{i}^{\xi_{1}}=t_{i}$, i.e. $t_{i}^{u}=t_{i}$ for all $u\in U_{1}$. Finally, let $u=\xi_{1}^{r_{1}-1}$. Then we have $\xi_{1}^{r_{1}}t_{i}\xi_{1}^{-r_{1}}\xi_{1}^{r_{1}-1}t_{i}^{-1}\xi_{1}^{-r_{1}-1}$ and since $t_{i}^{\xi_{1}^{r_{1}-1}}=t_{i}$ this becomes $\xi_{1}^{r_{1}}t_{i}\xi_{1}^{-r_{1}}t_{i}^{-1}$ and hence we have the relation $[\xi_{1}^{r_{1}},t_{i}]=1$. 
		
		\item[($R^{u}_{6}$)] Since $H_{1}$ is a subgroup of $K$, if $r$ is of the form $[\zeta_{1},a_{1}], a_{1}\in A_{1} $ then $uru^{-1}=[\zeta_{1}^{u},a_{1}^{u}]$.
		\item[($R^{u}_{7}$)] Finally, let $r$ be of the form $\xi_{1}k_{j}\xi_{1}^{-1}(\bar{f_{1}}(k_{j}))^{-1}$. Here we work in the same way as the case of $R^{u}_{5}$. For $u=1$ we get the equality of the generators $k_{j}^{\xi_{1}}=\bar{f_{1}}(k_{j})$, for $u=\xi_{1}$ we get the equality $k_{j}^{\xi_{1}^{2}}=(\bar{f_{1}}(k_{j}))^{\xi_{1}}=\bar{f_{1}}^{2}(k_{j})$ etc. Finally for $u=\xi_{1}^{r_{1}-1}$ we have the relator $\xi_{1}^{r_{1}}k_{j} \xi_{1}^{-r_{1}}\xi_{1}^{r_{1}-1}(\bar{f_{1}}(k_{j}))^{-1}\xi_{1}^{-(r_{1}-1)}$. Now since $ord(\bar{f_{1}})=r_{1}$ we have that $\xi_{1}^{r_{1}-1}\bar{f_{1}}(k_{j})\xi_{1}^{-(r_{1}-1)}=\bar{f_{1}}^{r_{1}}(k_{j})=k_{j}$ and hence we have the relation $[\xi_{1}^{r_{1}},k_{j}]=1$.
	\end{itemize}
	\par Now since $t_{i}^{u}=t_{i}$ for all $u\in U_{1}$ we can use Tietze transformations in order to remove $t_{i}^{u}$ from the presentation of $N_{1}$ for all $u\in U_{1}$, $u\neq 1$. Moreover from ($R^{u}_{7}$) we have that for all $k_{j}\in S_{K}$ there exists a $k_{j'}\in S_{K}$ such that $k_{j}^{u}=k_{j'}$ and therefore we can remove $k_{j}^{u}$ from the generating set for all $u\in U_{1}$, $u\neq 1$. That would mean that the relations  ($R^{u}_{1}$) will be replaced by the relations of the form $$[k_{j},k_{j'}], [\bar{f_{1}}(k_{j}),\bar{f_{1}}(k_{j'})],[\bar{f_{1}}^{2}(k_{j}),\bar{f_{1}}^{2}(k_{j'})],\dots,[\bar{f_{1}}^{r_{1}-1}(k_{j}),\bar{f_{1}}^{r_{1}-1}(k_{j'})].$$ But since $\bar{f_{1}}$ is an automorphism, we have that  if $r=[k_{j},k_{j'}]\in R_{K}$ then $\bar{f_{1}}(r)=[\bar{f_{1}}(k_{j}),\bar{f_{1}}(k_{j'})]\in R_{K}$ and therefore the relators ($R^{u}_{1}$) will be replaced by the relators $(R_{1})$, i.e  the relators $R_{K}$ of the group $K$. Moreover, in ($R^{u}_{6}$), let $a_{1,u}=a_{1}^{u}$, $u\in U_{1}$ be the generators of $K$ that arise after the above replacement, for all $a_{1}\in A_{1}$. Finally,notice that after we remove the generators  $k_{j}^{u}$,  $u\in U_{1}\setminus \{1\}$ from the relations ($R^{u}_{3}$) the new relations that will arise (if any) will be in consistency with the extension $\bar{f_{i}}$ of the action of $t_{i}$, since $\bar{f}_{1}, \bar{f}_{i}$ commute, for all $i\in \{2,\dots,n\}$. Let $t_{i}a_{i,u}t_{i}^{-1}=\hat{f}_{i}(a_{i,u})$ be the new relations after the replacement.
	\par We have now a new generating set for $N_{1}$, $S_{N_{1}}=\{k_{j},t_{i},\zeta_{1}^{u},\xi_{1}^{r_{1}}\}$ while the new set of relations $R_{N_{1}}$ is the set $$\Big{\{}R_{K},[\zeta_{1}^{u},a_{1,u}]=1,[\xi_{1}^{r_{1}},k_{j}]=1,[t_{i},t_{j}]=1,[t_{i},\zeta_{1}^{u}]=1,[t_{i},\xi_{1}^{r_{1}}]=1,t_{i}a_{i,u}t_{i}^{-1}=\hat{f}_{i}(a_{i,u})\Big{\}}$$ where $k_{j}\in S_{K}, i=2,\dots,n $ and $u\in U_{1}$.
	\par We notice here that with the above procedure we embed $G$ in a group with a subgroup of finite index where all the relations that are induced by the action of $t_{1}$ in the original group have now become commutators between generators. \par Now let $K_{2}$ be the right-angled Artin group  $$ K_{2}=\langle k_{j},\zeta_{1}^{u},\xi_{1}^{r_{1}} |R_{K}, [\zeta_{1},a_{1,u}], [\xi_{1}^{r_{1}},k_{j}] \rangle$$ and $\Lambda_{2}$ be the  right-angled Artin group $$\Lambda_{2}=\langle t_{2},\dots,t_{n} | [t_{i},t_{j}]=1 \rangle , \hspace{1mm} \text{ for all } i,j \text{ such that } [t_{i},t_{j}]\in R_{\Lambda}$$ Then $$N_{1}=\langle S_{K_{2}}, S_{\Lambda_{2}} | \hspace{1mm} R_{K_{1}},R_{\Lambda_{2}},  t_{i}a_{i,u}t_{i}^{-1}=\hat{f}_{i}(a_{i,u}), a_{i,u}\in  S_{\hat{H}_{i}} \rangle$$ where $\hat{H}_{i}=\langle a_{i,u}\rangle$. Therefore by repeating the above procedure for $N_{1}$ that is embed $N_{1}$ to a group $\tilde{N_{1}}$ and then take a subgroup of index $r_{2}$ we can replace the relations that are induced by the action of $t_{2}$ with relations that are commutators in the generating set. By working in the same way for the rest of the $t_{i}$ with $ord(t_{i})\geq 2$ we will end up in a right-angled Artin group in finitely many steps. \par Now  since linearity is closed under taking finite extensions of subgroups, we deduce that $G$ is $\Z-$linear.
\end{proof}
\begin{lemma}\label{Lemma2lin}
	Let  $K$, $K_{i}^{*}$ be the groups described above and assume that the automorphisms $\{\bar{f}_{i}\}$ generate a finite subgroup $D$ of $Aut(K)$. Then the group $$G=\langle S_{K}, t_{1},\dots, t_{n}\hspace{1mm}|\hspace{1mm} R_{K}, t_{i}a_{i}t_{i}^{-1}=f_{i}(a_{i}), a_{i}\in A_{i} \rangle $$ is $\mathbb{Z}-$linear.   
\end{lemma}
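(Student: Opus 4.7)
The plan is to adapt the embedding technique of Lemma~\ref{Lemma1lin}; the new difficulty is that since the $\bar f_i$ do not commute I cannot peel off the stable letters $t_i$ one at a time, so I absorb the finite action of $D$ into a single semidirect product. Fix a finite presentation $D=\langle d_1,\ldots,d_n\mid R_D\rangle$ with $d_i\mapsto\bar f_i$ and form
\[
\tilde G=\langle S_K,\xi_1,\ldots,\xi_n,\zeta_1,\ldots,\zeta_n\mid R_K,\,R_D(\xi),\,\xi_i k\xi_i^{-1}=\bar f_i(k),\,[\zeta_i,a]=1\rangle,
\]
where $k$ ranges over $S_K$, $a$ over $A_i$, and $R_D(\xi)$ is $R_D$ with each $d_i$ replaced by $\xi_i$. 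Set $L=\langle S_K,\xi_1,\ldots,\xi_n\rangle$; because $D$ acts faithfully on $K$ via the $\bar f_i$, the subgroup $L$ is isomorphic to the corresponding semidirect product of $K$ by $D$, and $\tilde G$ is the multiple HNN-extension of $L$ with stable letters $\zeta_i$ and associated subgroups $H_i=\langle A_i\rangle\le K$ attached by the identity. Define $\phi\colon G\to\tilde G$ by $\phi|_K=\mathrm{id}$ and $\phi(t_i)=\xi_i\zeta_i$; the only relation to check is $\phi(t_i a_i t_i^{-1})=\xi_i\zeta_i a_i\zeta_i^{-1}\xi_i^{-1}=\xi_i a_i\xi_i^{-1}=\bar f_i(a_i)=f_i(a_i)$.

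To prove that $\phi$ is injective, take a Britton-reduced word $g=x_0 t_{i_1}^{\epsilon_1}x_1\cdots t_{i_m}^{\epsilon_m}x_m$ in $G$ with $m\ge1$, rewrite $\phi(g)=y_0\zeta_{i_1}^{\epsilon_1}y_1\cdots\zeta_{i_m}^{\epsilon_m}y_m$ in $\tilde G$ with $y_j\in L$, and check case-by-case on $(\epsilon_k,\epsilon_{k+1})$ that a potential pinch $\zeta_i^{\pm 1}y\zeta_i^{\mp 1}$ with $y\in H_i$ would force the corresponding pinch in $g$; here one uses $\xi_i^{-1}x\xi_i=\bar f_i^{-1}(x)$ together with the automorphism identity $\bar f_i(H_i)=f_i(H_i)$. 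Britton's Lemma applied to $\tilde G$ as an HNN-extension over $L$ then gives $\phi(g)\ne 1$. Next, define the retraction $\theta\colon\tilde G\twoheadrightarrow D$ by $\xi_i\mapsto\bar f_i$ and $k_j,\zeta_i\mapsto 1$, and let $N=\ker\theta$, an index-$|D|$ normal subgroup. Choose a Schreier transversal $U\subset\langle\xi_1,\ldots,\xi_n\rangle$ for $N$ in $\tilde G$ and run the Reidemeister--Schreier procedure exactly as in Lemma~\ref{Lemma1lin}: the generators $u k_j u^{-1}=\bar f_u(k_j)$ are redundant since they already lie in $K$, the generators $u\xi_i v^{-1}$ (with $v=\overline{u\xi_i}$) vanish once $R_D(\xi)$ is processed, and the relations $u[\zeta_i,a]u^{-1}$ become $[\tau_{u,i},\bar f_u(a)]=1$ where $\tau_{u,i}:=u\zeta_i u^{-1}$. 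After these Tietze reductions,
\[
N=\langle S_K,\,\{\tau_{u,i}\}_{u\in U,\,1\le i\le n}\mid R_K,\,[\tau_{u,i},b]=1\text{ for }b\in\bar f_u(A_i)\rangle,
\]
which is a right-angled Artin group because each $\bar f_u$ permutes $S_K$ and hence $\bar f_u(A_i)\subseteq S_K$.

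Right-angled Artin groups are $\mathbb Z$-linear (\cite{Davis}, \cite{HsuWise}), hence $N$ is $\mathbb Z$-linear; since linearity is closed under finite extensions, so is $\tilde G$; and since $G$ embeds in $\tilde G$ via $\phi$, it follows that $G$ is $\mathbb Z$-linear. The main obstacle in this plan is the injectivity of $\phi$: one must match every possible Britton pinch in $\tilde G$ (which now involves $\xi_i$-conjugation inside $L$) with a pinch in $G$, and the identity $\bar f_i(H_i)=f_i(H_i)$ is what converts one kind of pinch to the other. A secondary delicate point is verifying that Reidemeister--Schreier introduces no hidden relations among the $|D|\cdot n$ new stable letters $\tau_{u,i}$; this is equivalent to saying that the quotient of the Bass--Serre tree of $\tilde G$ (over $L$) by $N$ is a bouquet of $|D|\cdot n$ loops at a single vertex with stabilizer $K$.
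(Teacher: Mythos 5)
Your proposal is correct and follows essentially the same route as the paper: embed $G$ into $\tilde G$ via $t_i\mapsto\xi_i\zeta_i$, where the $\xi_i$ realize the $\bar f_i$ and the $\zeta_i$ centralize the $H_i$, then pass to the kernel of $\tilde G\twoheadrightarrow D$ and identify it as a right-angled Artin group by Reidemeister--Schreier. The one divergence is that you impose the full relator set of $D$ on the $\xi_i$ (so that $\langle S_K,\xi_1,\dots,\xi_n\rangle\cong K\rtimes D$ and $N$ carries no residual $\xi$-generators), whereas the paper omits the power relators $\xi_i^{r_i}$ and must then argue that the leftover generators $\xi_i^{r_i}$, $\bar w_\beta$ can be absorbed into the RAAG structure; your variant is, if anything, the cleaner bookkeeping, and your Bass--Serre description of $N$ as the fundamental group of a bouquet of $n|D|$ loops over the vertex group $K$ correctly certifies that no hidden relations arise.
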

\begin{proof}
	First of all, notice that $G$ can be considered as the fundamental group of the following graph of groups \\
	\begin{center}
		\begin{tikzpicture}[scale=1.1]
			\begin{polaraxis}[grid=none, axis lines=none]
				\addplot[mark=none,domain=0:360,samples=200] {cos(x*3)};
			\end{polaraxis}
			\node at (3.7,3.8) (nodeP) {\large{$K$}};
			\fill (3.43cm,3.43cm) circle (1.5pt); 
			\node at (6.2,4.3) (nodeQ) {\small{$t_{1}a_{1}t_{1}^{-1}=f_{1}(a_{1}), a_{1}\in A_{1}$}};
			\node at (0.02,4.5) (nodeR) {\small{$t_{2}a_{2}t_{2}^{-1}=f_{2}(a_{2}), a_{2}\in A_{2}$}};
			\node at (-0.21,2.4) (nodeS) {\small{$t_{3}a_{3}t_{3}^{-1}=f_{3}(a_{3}), a_{3}\in A_{3}$}};
			\fill (4.73cm,2.38cm) circle (1.5pt);
			\fill (4.28cm,2.13cm) circle (1.5pt);
			\fill (3.73cm,1.93cm) circle (1.5pt);
		\end{tikzpicture}
	\end{center}
	Assume that $D$ has a presentation $$ D=\langle s_{1}, s_{2}\dots,s_{n}\hspace{1mm}|\hspace{1mm} s_{1}^{r_{1}}, s_{2}^{r_{2}},\dots, s_{n}^{r_{n}}, w_{1}(s_{1},\dots,s_{n}),\dots,w_{M}(s_{1},\dots,s_{n})\rangle$$
	where the generators $s_{i}$ correspond to the automorphisms $\bar{f}_{i}$, $i\in\{1,\dots,n\}$ of $K$ and $ w_{\beta}(s_{1},\dots,s_{n})$, $\beta\in\{1,\dots,M\}$ are words in these generators. 
	\par Now let $\tilde{G}$ be the group with generating set $$S_{\tilde{G}}=\{S_{K},\xi_{1},\xi_{2},\dots,\xi_{n},\zeta_{1},\zeta_{2},\dots,\zeta_{n}\}$$ while the set of relations $R_{\tilde{G}}$ of  $\tilde{G}$  is the following.
	\begin{itemize}
		\item[($R_{1}$)] The relators $R_{K}$ of the group $K$
		\item[($R_{2}$)] The words $w_{\beta}(\xi_{1},\dots,\xi_{n})=1$ induced by the presentation of $D$
		\item[($R_{3}$)] $[\zeta_{i},a_{i}]=1$ for all $a_{i}\in A_{i}$, $i\in \{1,\dots,n\}$
		\item[($R_{4}$)] $\xi_{i}k_{i}\xi_{i}^{-1}=\bar{f_{i}}(k_{j})$, for $i\in \{1,\dots,n\}$,  $j=1,\dots,m$
		
	\end{itemize}
	\par The map $\phi:$ $G \to \tilde{G}$ with $\phi(k_{j})=k_{j}$ for every $k_{j}\in S_{K}$, $\phi(t_{i})=\xi_{i}\zeta_{i}$ for $i=1,\dots,n$ defines a monomorphism (with similar arguments to those stated in Lemma \ref{Lemma1lin}). 
	\par We now define the epimorphism $\theta:\tilde{G} \twoheadrightarrow D$ that maps $\xi_{i}$ to $s_{i}$ while the generators  $\{k_{j}\}$ and $\{\zeta_{i}\}$ are mapped to $1_{D}$. Let $N=Ker\theta$ and $U$ be a Schreier transversal for $N$ with $1\in U$. Then a generating set for $N$ consists of the non-trivial elements $\{us(\overline{us})^{-1}, u\in U_{1}, s\in S_{\tilde{G}_{1}}\}$, where $\bar{g}$ is the representative of $g$ in $U_{1}$ for every $g\in \tilde{G}_{1}$. Therefore we will have that $$S_{N}=\{k_{j}^{u},\zeta_{i}^{u},\xi_{i}^{r_{i}},\bar{w}_{1}(\xi_{1},\dots,\xi_{n}),\dots,\bar{w}_{M}(\xi_{1},\dots,\xi_{n})\}$$ where $k_{j}\in S_{K}$, $u\in U$ and $\bar{w}_{\beta}(\xi_{1},\dots,\xi_{n})$,  $\beta\in\{1,\dots,M\}$ are words in $\xi_{i}$, $i\in \{1,\dots,n\}$.
	\par  We will now use the Reidemeister-Schreier rewriting process in order to find the relations $R_{N}$ of $N$. 
	
	\begin{itemize}
		\item[$(R_{1}^{u})$]  Let $r\in R_{K}$ i.e. $r$ is a relator of the form $[k_{j},k_{j'}]$, $k_{j},k_{j'}\in S_{K}$. Then since $k_{j}^{u},k_{j'}^{u}$ belong to the generating set of $N$ for every $u\in U$ we have that $u[k_{j},k_{j'}]u^{-1}= [uk_{j}u^{-1},uk_{j'}u^{-1}]= [k_{j}^{u},k_{j'}^{u}]$ and thus we have that the relations of $K$ induce relations in $N$ that are commutators on its generating set.

		\item[$(R_{2}^{u})$] Let $r$ be a relation of $N$ that corresponds to a word ${w}$ of $D$. Then the relations $uru^{-1}$ will give us equations between the generators of $N$ that allows us to remove the generators $\bar{w}_{\beta}(\xi_{1},\dots,\xi_{n})$ from the generating set $S_{N}$ as well as relations that are commutators between the generators $\xi_{i}^{r_{i}}$.
		
		\item[$(R_{3}^{u})$] Since $H_{i}$ are subgroups of $K$, if $r$ is of the form $[\zeta_{i},a_{i}], a_{i}\in A_{i} $ then $uru^{-1}=[\zeta_{i}^{u},a_{i}^{u}]$.
		
		\item[$(R_{4}^{u})$] Finally, let $r$ be of the form $\xi_{i}k\xi_{i}^{-1}(\bar{f_{i}}(k))^{-1}$, $i\in\{1,\dots n\}$. Since the actions of $\xi_{i}$ correspond to the generators of $D$, after rewriting the relators $uru^{-1}$, where $u\in U$ and $r\in R_{\tilde{G}}$ in terms of generators of $N$ and using the relations $(R_{2}^{u})$ we will end up to the relations $[\xi_{i}^{r_{i}},k_{j}]=1$, for all $i\in \{1,\dots,n\}$ and $j\in\{1,\dots,m\}$ as well as equalities between the generators $k_{j}^{u}$ that allows us to remove the generators $k_{j}^{u}$ from the presentation of $N$ for all $u\in U$ with $u\neq 1$.

	\end{itemize}

	Therefore, similarly to Lemma \ref{Lemma1lin}, after applying Tietze transformations we will end up in a presentation of $N$ where all the relators are commutators of its generators and thus $N$ is a right-angled Artin group and thus $\mathbb{Z}-$linear. Consequently, again since linearity is closed under taking finite extensions of subgroups the group $\tilde{G}$ is $\mathbb{Z}-$linear and thus $G$ is $\mathbb{Z}$-linear. 
\end{proof}

\begin{corollary}
	Let $G$ be a group with presentation $$G=\langle x,y \hspace{1mm}|\hspace{1mm}[x^{n},y^{m}]=1\rangle,\hspace{2mm} n,m\in\N.$$ Then $G$ is $\mathbb{Z}$-linear.
\end{corollary}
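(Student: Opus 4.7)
The plan is to apply Lemma~\ref{Lemma2lin} to an appropriate finite-index subgroup of $G$ and then to invoke closure of $\Z$-linearity under finite extensions.

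Consider the epimorphism $\phi \colon G \twoheadrightarrow \Z_n$ sending $x \mapsto 1$ and $y \mapsto 0$, which is well defined since $\phi([x^n, y^m]) = 0$. Let $N = \ker \phi$, so $[G : N] = n$. A Reidemeister--Schreier computation with Schreier transversal $\{1, x, x^2, \dots, x^{n-1}\}$ yields the presentation
\[
N = \langle X,\ Y_0, Y_1, \dots, Y_{n-1} \mid [X, Y_i^{m}] = 1,\ i = 0, 1, \dots, n-1 \rangle,
\]
where $X = x^n$ and $Y_i = x^i y x^{-i}$, since each conjugate $x^i [x^n, y^m] x^{-i}$ rewrites as $[X, Y_i^{m}]$. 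Thus $N$ is the HNN extension of the free group $F_n = \langle Y_0, \dots, Y_{n-1} \rangle$ (a free right-angled Artin group) with stable letter $X$ centralising the subgroup $\langle Y_0^{m}, \dots, Y_{n-1}^{m} \rangle$.

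Because the associated subgroup is generated by $m$-th powers of the generators $Y_i$ rather than by elements of the generating set $S_{F_n}$ itself, Lemma~\ref{Lemma2lin} does not apply verbatim. To bring the setup into the required form I would pass once more to the finite-index subgroup $M = \ker \psi \le N$ where $\psi \colon N \twoheadrightarrow \Z_m$ sends $Y_i \mapsto 1$ for every $i$ and $X \mapsto 0$ (well defined since $\psi([X, Y_i^{m}]) = 0$). A second Reidemeister--Schreier computation with transversal $\{1, Y_0, Y_0^2, \dots, Y_0^{m-1}\}$, followed by Tietze transformations, should produce for $M$ a presentation whose base is a free right-angled Artin group on the transversal conjugates of the $Y_i$ and whose stable letter (inherited from $X$) acts on these base generators by a cyclic permutation of order $m$ -- an action that extends to a finite-order automorphism of the base generating a finite cyclic subgroup of $\mathrm{Aut}$. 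Lemma~\ref{Lemma2lin} then applies and gives $\Z$-linearity of $M$; since $[G : M] = nm$ and $\Z$-linearity is preserved under finite extensions, $G$ is $\Z$-linear.

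The main obstacle is the second Reidemeister--Schreier computation and the subsequent Tietze-reduction: one must verify that the rewritten relations of $M$ become commutators of individual base generators (so that the base is honestly a right-angled Artin group), and that the residual conjugation relations involving the stable letter take the HNN form $t \cdot a \cdot t^{-1} = f(a)$ with $a \in S_K$ and $f$ extending to a finite-order automorphism whose cycles consist of commuting generators (as the setup preceding Lemma~\ref{Lemma2lin} requires). The technical heart is controlling how conjugation by $X$ interacts with the second Schreier transversal and with the remaining generators $Y_i$ for $i > 0$.
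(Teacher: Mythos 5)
Your first step coincides with the paper's: passing to the index-$n$ kernel of $x\mapsto 1$, $y\mapsto 0$ and obtaining $N=\langle X, Y_{0},\dots,Y_{n-1}\mid [X,Y_{i}^{m}]=1\rangle$ with $X=x^{n}$, $Y_{i}=x^{i}yx^{-i}$ is exactly the paper's subgroup $N$ (your formula for the conjugates is in fact the correct one). The gap is in your second step. Passing to $M=\ker\psi$ with transversal $\{1,Y_{0},\dots,Y_{0}^{m-1}\}$ does \emph{not} turn the powers $Y_{i}^{m}$ into single Schreier generators: only $Y_{0}^{m}$ becomes one. For $i\neq 0$ one has $Y_{i}^{m}=Z_{i,0}Z_{i,1}\cdots Z_{i,m-1}$ where $Z_{i,j}=Y_{0}^{j}Y_{i}Y_{0}^{-(j+1\bmod m)}$, so the rewritten relators of $M$ have the form $[X_{j}, Z_{i,j}Z_{i,j+1}\cdots]$ with $X_{j}=Y_{0}^{j}XY_{0}^{-j}$ --- commutators of a generator with a \emph{word of length $m$}, not with a single generator. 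Moreover there are now $m$ conjugates $X_{0},\dots,X_{m-1}$ rather than one stable letter. So after the second Reidemeister--Schreier step you are facing the same obstruction you started with, and Lemma~\ref{Lemma2lin} still does not apply; the "main obstacle" you flag at the end is real and your construction does not remove it.

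The missing idea (and what the paper actually does) is not a second finite-index passage but a Tietze ``unrolling'' applied directly to $N$: for each $i$ introduce new generators $r_{i,1}=Y_{i}XY_{i}^{-1}$, $r_{i,2}=Y_{i}r_{i,1}Y_{i}^{-1},\dots,r_{i,m-1}=Y_{i}r_{i,m-2}Y_{i}^{-1}$ (in the paper's notation, $t_{i}$ plays the role of $Y_{i}$ and $z=X$), so that the single relation $Y_{i}^{m}XY_{i}^{-m}=X$ is replaced by the chain $Y_{i}XY_{i}^{-1}=r_{i,1}$, $Y_{i}r_{i,1}Y_{i}^{-1}=r_{i,2},\dots,Y_{i}r_{i,m-1}Y_{i}^{-1}=X$. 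Now the base is the free group on $\{X\}\cup\{r_{i,j}\}$, each stable letter $Y_{i}$ acts on the generating set by the $m$-cycle $(X\ r_{i,1}\ \cdots\ r_{i,m-1})$, and these permutations (which do not commute, since they all move $X$, but do generate a finite subgroup of the symmetric group on the finite generating set) satisfy exactly the hypotheses of Lemma~\ref{Lemma2lin}. This yields $\Z$-linearity of $N$, hence of $G$, with only the one finite-index step you already carried out.
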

\begin{proof}
	Let $N$ be the subgroup of $G$ of finite index $n$ with respect to $x$ with presentation $$N=\langle z, t_{1}, t_{2}, \dots, t_{n}\hspace{1mm}|\hspace{1mm} [t_{i}^{m},z]=1\rangle$$ where $z=x^{n}$ and $t_{i}=y^{i}x^{n}y^{-i}$, $i\in \{1,\dots,n\}$.

	For each $i\in \{1,\dots,n\}$ we have  $$[t_{i}^{m},z]=1\Leftrightarrow$$ $$t_{i}^{m}zt_{i}^{i-m}=z\Leftrightarrow$$ $$\underbrace{t_{i}\dots t_{i}}_{m \text{ times}}z\underbrace{t_{i}^{-1}\dots t_{i}^{-1}}_{m \text{ times}}=z$$
	
	Let  \hspace{2mm}
	$\begin{cases}
		r_{i,1} & =  t_{i}zt_{i}^{-1}
		\\ r_{i,2} & =  t_{i}r_{i,1}t_{i}^{-1}\\ & \vdots  \\ 
		r_{i,m-1} &=  t_{i}r_{i,m-2}t_{i}^{-1}
	\end{cases}   $.
	\vspace{3mm}

	Using Tietze transformations we add the elements $\{r_{i,j}\}$ in the presentation of $G$ and thus the relations $t_{i}^{m}zt_{i}^{i-m}=z$ can be replaced by the following set of relations $$\{t_{i}zt_{i}^{-1}=r_{i,1},  t_{i}r_{i,1}t_{i}^{-1}=r_{i,2},\dots, t_{i}r_{i,m-2}t_{i}^{-1}=r_{i,m-1},t_{i}r_{i,m-1}t_{i}^{-1}=z \}$$ for all $i\in \{1,\dots,n\}$. 
	
	In each such set, the action of $t_{i}$ can be extended to an automorphism that is defined by the permutation $(z\hspace{2mm} r_{i,1}\hspace{2mm} r_{i,2} \cdots r_{i,m-1} )$. Now since these automorphisms generate a finite group (subgroup of the symmetric group), from Lemma \ref{Lemma2lin} we have that $N$, and therefore $G$, is $\mathbb{Z}-$linear.
\end{proof}
\subsection{The case of the Rose graph}

As we already stated, an HNN-extension can be considered as the fundamental group of a graph of groups $(\mathcal{G},\Gamma)$, where  $\Gamma$ is a loop,  the  base group is the group $G_{P}$, where $P$ is the single vertex of $\Gamma$, and associated subgroups  $\alpha_{e}(G_{e})$ and $\alpha_{\bar{e}}(G_{e})$, where $G_{e}$ is the group associated to the single edge $e$ of $\Gamma$. 

In the following Theorem we fill a gap in the proof of the main Theorem (Theorem 3.1) of \cite{Mrv}.
\begin{theorem}
	Let $K$ be a finitely generated free abelian group and $A$ be subgroup of $K$. If $f$ is an automorphism of $K$ of finite order then the HNN-extension $$G=\langle S_{K}, t\hspace{1mm}|\hspace{1mm}R_{K}, tat^{-1}=f(a), a\in A\rangle$$ is $\mathbb{Z}-$linear.
\end{theorem}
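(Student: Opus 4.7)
The plan is to run the embedding trick of Lemmas~\ref{Lemma1lin} and~\ref{Lemma2lin} with a single stable letter. Let $r=\mathrm{ord}(f)$ and introduce the auxiliary group
\[
\tilde{G}=\langle S_K,\xi,\zeta\mid R_K,\ \xi k\xi^{-1}=f(k)\text{ for }k\in S_K,\ [\zeta,a]=1\text{ for }a\in A\rangle.
\]
First I would define $\phi:G\to\tilde{G}$ by $k\mapsto k$ for $k\in S_K$ and $t\mapsto\xi\zeta$ and verify that it is an injective homomorphism. Preservation of the relation $tat^{-1}=f(a)$ uses $\phi(tat^{-1})=\xi\zeta a\zeta^{-1}\xi^{-1}=\xi a\xi^{-1}=f(a)$, since $\zeta$ centralizes $A$. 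Injectivity follows as in Lemma~\ref{Lemma1lin}: viewing $\tilde{G}$ as an HNN extension with stable letter $\xi$ over the base $\langle S_K,\zeta\rangle$, a Britton-reduced word of length $2n+1$ in $G$ maps to a reduced word of length $3n+1$ in $\tilde{G}$, because any collapse of a subword $\xi\zeta g\zeta^{-1}\xi^{-1}$ would force $g\in A$ and would contradict the reducedness of the original word.

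Next I would take the epimorphism $\theta:\tilde{G}\twoheadrightarrow\Z/r$ defined by $\xi\mapsto 1$ and $k,\zeta\mapsto 0$ (well-defined because $f^r=\mathrm{id}$) and compute a presentation of $N:=\ker\theta$, of index $r$, by Reidemeister-Schreier with Schreier transversal $U=\{1,\xi,\ldots,\xi^{r-1}\}$. Using the relations $\xi k\xi^{-1}=f(k)$, Tietze transformations eliminate each Schreier generator $k^{\xi^i}$ with $i\ge 1$ in favor of the word $f^i(k)$ in the generators of $K$; the rewriting for $u=\xi^{r-1}$ then contributes the relations $[\xi^r,k]=1$ for every $k\in S_K$, and the $[\zeta,a]=1$ relations rewrite as $[\zeta_i,f^i(a)]=1$ for $a\in A$ and $0\le i<r$, where $\zeta_i:=\xi^i\zeta\xi^{-i}$. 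The resulting presentation exhibits $N$ as the fundamental group of a star graph of groups: central vertex $K\times\langle\xi^r\rangle\cong\Z^{n+1}$ and, for each $i$, a leaf $\langle\zeta_i\rangle\times f^i(A)\cong\Z^{1+\mathrm{rank}(A)}$ joined to the center by an edge whose edge group $f^i(A)$ is included as $f^i(A)\le K$ on one side and as $\{1\}\times f^i(A)$ on the other.

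Once $N$ is shown to be $\Z$-linear the theorem follows, because $\tilde{G}$ is then $\Z$-linear as a finite extension of $N$ and $G$ embeds in $\tilde{G}$ via $\phi$. The main obstacle is precisely the linearity of $N$: unlike the situation of Lemma~\ref{Lemma2lin}, the subgroups $f^i(A)$ are in general not generated by a subset of any basis of the center $\Z^{n+1}$, so the relations $[\zeta_i,f^i(a)]=1$ force the $\zeta_i$ to commute with integer linear combinations of basis generators rather than with individual generators, and $N$ is not a right-angled Artin group on the nose. To close the argument I would either construct an explicit faithful integral representation of $N$—realizing $\Z^{n+1}$ by a block of unipotent upper-triangular matrices and choosing the images of the $\zeta_i$ so that their centralizers intersect the image of $K$ in exactly $f^i(A)$—or appeal to the linearity results for star graphs of free abelian groups developed in the following subsection of the paper.
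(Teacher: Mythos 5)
Your reduction follows the paper's strategy almost exactly (embed $G$ into $\tilde G$ via $t\mapsto\xi\zeta$, check injectivity by Britton's lemma, pass to the kernel of $\tilde G\twoheadrightarrow\Z/r$ and rewrite by Reidemeister--Schreier), but the proof has a genuine gap exactly where you say it does: you never establish the linearity of $N$. The idea you are missing is the paper's very first move: before doing anything else, replace $K$ (and correspondingly $G$) by a finite-index subgroup so that $A$ becomes a \emph{direct summand} of $K$, with a basis $S_A$ extending to a basis $S_K$. Since $f$ has finite order, the subgroup generated by the orbit $\bigcup_j f^j(A)$ is $f$-invariant and is a finite-index subgroup of a direct summand of $K$; after this normalization the leftover relations $[\zeta_i,f^j(a)]=1$ become commutators between elements of a generating set of $N$, so $N$ is (after one more finite-index adjustment) a right-angled Artin group, and linearity of $G$ follows from closure under finite extensions and subgroups. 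Without that normalization the relations $[\zeta_i,f^i(a)]=1$ really do involve nontrivial integer combinations of basis elements, which is precisely the wall you hit.

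Neither of your two proposed escape routes closes the gap. The star-graph results of the following subsection (Theorem \ref{prop1} and its corollaries) require \emph{cyclic} edge groups with primitive/extendable images, whereas your star decomposition of $N$ has edge groups $f^i(A)\cong\Z^{\operatorname{rank}A}$ of arbitrary rank, so they do not apply. And the suggested explicit unipotent representation of $N$ is not constructed; arranging that the centralizer of the image of $\zeta_i$ meets the image of $K$ in exactly $f^i(A)$ \emph{and} that the representation is faithful on the whole graph of groups is essentially the entire difficulty, not a routine verification. (A minor, inessential difference from the paper: the paper also adjoins a second letter $z$ with $\xi\zeta\xi^{-1}=z$, $\xi z\xi^{-1}=\zeta$ and uses $r=\operatorname{lcm}(2,\operatorname{ord} f)$, which keeps only two conjugates of $\zeta$ in the kernel instead of your $r$ conjugates $\zeta_0,\dots,\zeta_{r-1}$; either bookkeeping works once the direct-summand reduction is in place.)
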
 
\begin{proof}
	First of all, by taking appropriate finite index subgroup of $K$ we can assume that $A$ is a direct summand of $K$ with generating set $S_{A}=\{a_{j}\}$. Thus we can extend this generating set to a generating set $S_{K}$ of $K$.
	
	We embed $G$ to the group $\tilde{G}$ with presentation $$\tilde{G}=\langle S_{K},  \xi, \zeta, z \hspace{1mm}|\hspace{1mm} R_{K}, \xi k\xi^{-1}=f(k), \xi \zeta \xi^{-1}=z,  \xi z\xi^{-1}=\zeta, [\zeta,a]=1,[z,a]=1\rangle$$ 
	where, $k\in S_{K}, a\in S_{A}$.
	The map $\phi:G\to \tilde{G}$ with $\phi(t)=\xi\zeta$ and $\phi(k)=k$ for all $k\in K$ is a monomorphism according to arguments described in Lemma \ref{Lemma1lin}. 
	
	Now let $r=lcm(2,\bar{r})$, where $\bar{r}$ is the order of the automorphism $f$ and let $\theta$ be the epimorphism $\theta:\tilde{G}\twoheadrightarrow \mathbb{Z}_{r}$ that maps  $\xi$ to $1$ and $\zeta,z, k$ to $0$ for all $k\in K$. Let $N=Ker\theta$ and $U=\{1,\xi,\xi_{1}^{2},\dots,\xi_{1}^{r-1}\}$ be a Schreier transversal for $N$. Then a generating set for $N$ is $S_{N}=\{k^{u},\zeta^{u},z^{u},\xi^{r} \}$, $k\in S_{K}$ and $u\in U$. From the Reidemeister-Schreier rewriting process the relations of $N$ are 
	\begin{itemize}
		\item[-] $[k_{i}^{u},k_{j}^{u}]=1$, $k_{i},k_{j}\in S_{K}$
		\item[-] $k^{\xi}=f(k), k^{\xi^{2}}=f^{2}(k),\dots, k^{\xi^{r-1}}=f^{r-1}(k)$, $[\xi^{r},k]=1$, $k\in S_{K}$
		\item[-] $[\zeta,a]=1$, $[\zeta^{\xi},a^{\xi}]=1$,\dots,$[\zeta^{\xi^{r-1}},a^{\xi^{r-1}}]=1, a\in S_{A}$
		\item[-] $[z,a]=1$, $[z^{\xi},a^{\xi}]=1$,\dots,$[z^{\xi^{r-1}},a^{\xi^{r-1}}]=1, a\in S_{A}$
		\item[-] $\zeta^{\xi}=z, \zeta^{\xi^{2}}=\zeta,\dots,  [\xi^{r},\zeta]=1$
		\item[-] $z^{\xi}=\zeta, z^{\xi^{2}}=z,\dots,  [\xi^{r},z]=1$
	\end{itemize}

	Now we can apply Tietze transformations in order to replace the generators $k^{u}$, $u\in U\setminus\{1\}$ using the equalities $k^{\xi}=f(k), k^{\xi^{2}}=f^{2}(k),\dots, k^{\xi^{r-1}}=f^{r-1}(k)$, $k\in S_{K}$ and the generators $\zeta^{u},z^{u}$, $u\in U\setminus\{1\}$. Therefore we have a new presentation of $N$ with generating set $\{k,\zeta,z, \xi^{r}\}$, $k\in S_{K}$ and relations $$R_N=\{R_{K},[\xi^r,k]=1,,[\xi^r,\zeta]=1, ,[\xi^r,z]=1,[\zeta,f^{j}(a)]=1,[z,f^{j}(a)]=1$$ where  $k\in S_{K}$, $a\in S_{A}$ and $j\in \{0,\dots,r-1\}$.
	
	The only relations of $N$ that now are not commutators between elements of the generating set are the relations of the form $[\zeta,f^{j}(a)]=1$ and $[z,f^{j}(a)]=1$. But since the subgroup $\langle orb(a) \rangle=\langle f^{j}(a)\rangle$, $a\in A$ is a subgroup of finite index of a direct factor of $K$ we end up having a group $N$ where every relation of $N$ is a commutator between elements of the generating set and thus $N$ is a right-angled Artin group. Hence $N$ is $\mathbb{Z}$-linear. Consequently, since linearity is closed under taking finite extensions of subgroups, we deduce that $G$ is $\Z-$linear.
\end{proof}

\begin{proposition}
	Let $F_{n+1}$ be a free group of finite rank $n+1$ with basis $\{x_{0},x_{1},\dots,x_{n}\}$ and $\phi$ be an automorphism of $F_{n+1}$ such that $\phi(x_{i})=x_{0}^{-i}x_{i}x_{0}^{i}$ for all $i=0,1,\dots,n$. Then the HNN-extension $$G=\langle t, F_{n+1} \hspace{1mm} |\hspace{1mm} tx_{i}t^{-1}=\phi(x_{i}), i=0,1,\dots,n \rangle$$ is $\mathbb{Z}-$linear.
\end{proposition}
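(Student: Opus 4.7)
The plan is to exhibit $G$ as the fundamental group of a star graph of groups whose vertex groups are free abelian of rank two and whose edge groups are cyclic, and then to apply the paper's linearity theorem for this family of graphs of groups.

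First I would exploit the relation $tx_0t^{-1}=x_0$ (the $i=0$ case) to conclude that $\langle x_0,t\rangle$ is abelian; Britton's Lemma for the HNN-extension $G$ rules out any further relation, so $\langle x_0,t\rangle\cong\mathbb{Z}^2$. For each $i\in\{1,\ldots,n\}$, the relation $tx_it^{-1}=x_0^{-i}x_ix_0^i$ rearranges to $(x_0^it)\,x_i\,(x_0^it)^{-1}=x_i$, so setting $y_i\defeq x_0^it$ we obtain $[x_i,y_i]=1$; since $x_i$ is a basis element of $F_{n+1}$ lying outside $\langle x_0,t\rangle$, the subgroup $\langle x_i,y_i\rangle$ is free abelian of rank two.

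Next I would assemble these subgroups into a star graph of groups $(\mathcal{G},\Gamma)$: the graph $\Gamma$ is the star on $n+1$ vertices, with a central vertex $v_0$ carrying the group $\langle x_0,t\rangle\cong\mathbb{Z}^2$ and leaf vertices $v_1,\ldots,v_n$ carrying $\langle x_i,y_i\rangle\cong\mathbb{Z}^2$; the edge $e_i$ from $v_0$ to $v_i$ carries the cyclic group $\mathbb{Z}$, embedded into $v_0$ by sending its generator to $x_0^it$ and into $v_i$ by sending its generator to $y_i$. Since $\Gamma$ is a tree, $\pi_1(\mathcal{G},\Gamma)$ is obtained by iteratively amalgamating the leaves onto the centre; eliminating each $y_i$ via the relation $y_i=x_0^it$ introduced at the amalgamation over $e_i$ recovers exactly the given presentation of $G$, confirming $G\cong\pi_1(\mathcal{G},\Gamma)$.

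Finally I would invoke the paper's linearity theorem for fundamental groups of star graphs of free abelian vertex groups with cyclic edge groups. The proof of that theorem, in the spirit of Lemmas~\ref{Lemma1lin} and~\ref{Lemma2lin}, embeds the group into a larger group admitting a finite-index right-angled Artin subgroup via a splitting of the stable letters and Reidemeister--Schreier rewriting; $\mathbb{Z}$-linearity of $G$ then follows because linearity is preserved under taking finite-index extensions. The main obstacle is the reduction to the star graph of groups: one must verify that the embeddings $\langle x_0^it\rangle\hookrightarrow\langle x_0,t\rangle$ are injective (which holds because $x_0^it$ has infinite order in $\mathbb{Z}^2$ for every $i$) and that the iterated amalgamation introduces no additional relations, both of which reduce to the normal form theorem for the original HNN-extension $G$.
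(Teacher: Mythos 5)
Your proof is correct, but it takes a different route from the paper. The paper proves this proposition directly: it renames $x_{0}$ as a second stable letter $t_{2}$, rewrites each relation $t_{2}^{-i}t_{1}x_{i}t_{1}^{-1}t_{2}^{i}=x_{i}$ by introducing the auxiliary conjugates $r_{i,1},\dots,r_{i,i}$, observes that the resulting actions of $t_{1}$ and $t_{2}$ extend to the mutually inverse (hence commuting) cycles $(x_{i}\ r_{i,1}\cdots r_{i,i})$ and $(x_{i}\ r_{i,i-1}\cdots r_{i,1})$, and then applies Lemma \ref{Lemma1lin} with base the free group on $\{x_{i},r_{i,j}\}$ and $\Lambda=\langle t_{1},t_{2}\mid [t_{1},t_{2}]\rangle$. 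You instead realize $G$ as a star of groups with all vertex groups $\mathbb{Z}^{2}$ (centre $\langle x_{0},t\rangle$, leaves $\langle x_{i},x_{0}^{i}t\rangle$) and cyclic edge groups, check the leaf-side primitivity hypothesis ($\omega_{e_{i}}$ hits the basis element $y_{i}$), and invoke Theorem \ref{prop1}; your presentation check that eliminating $y_{i}=x_{0}^{i}t$ recovers $G$ is right, and there is no circularity since Theorem \ref{prop1} does not depend on this proposition (though it appears later in the text, so the citation is forward). When Theorem \ref{prop1} is unwound on your star graph it reproduces essentially the paper's computation — same auxiliary generators, same commuting permutations, same appeal to Lemma \ref{Lemma1lin} — so the two arguments coincide at bottom. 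What your packaging buys is conceptual economy: the proposition becomes a special case of the star-graph theorem rather than a separate verification. What the paper's direct proof buys is self-containedness within the rose-graph section and an explicit illustration of Lemma \ref{Lemma1lin} applied with a nonabelian free base group and with a vertex generator promoted to a stable letter, which is the trick the author wants to showcase there.
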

\begin{proof}
	Rename $t$ as $t_{1}$ and $x_{0}$ as $t_{2}$. Therefore the presentation of $G$ can be written as $$G=\langle t_{1}, t_{2}, F_{n} \hspace{1mm} |\hspace{1mm} t_{2}^{-i}t_{1}x_{i}t_{1}^{-1}t_{2}^{i}=x_{i}, i=1,\dots,n \rangle$$ where $F_{n}$ is the free group with basis $\{x_{1},\dots,x_{n}\}$.
	
	For all $i\in\{1,\dots,n\}$ we have that  $$t_{2}^{-i}t_{1}x_{i}t_{1}^{-1}t_{2}^{i}=x_{i}\Leftrightarrow $$	$$\underbrace{t_{2}^{-1}\dots t_{2}^{-1}}_{i \text{ times}}t_{1}x_{i}t_{1}^{-1}\underbrace{t_{2}\dots t_{2}}_{i \text{ times}}=x_{i}$$ \vspace{3mm}	Let  \hspace{2mm}
	$\begin{cases}
		r_{i,1} & =  t_{1}x_{i}t_{1}^{-1}
		\\ r_{i,2} & =  t_{2}^{-1}r_{i,1}t_{2}\\ & \vdots  \\ 
		r_{i,i} &=  t_{2}^{-1}r_{i,i-1}t_{2}
	\end{cases}   $.
	\vspace{5mm}

	Using Tietze transformations we add the elements $\{r_{i,j}\}$ in the presentation of $G$ and thus the relations $t_{2}^{-i}t_{1}x_{i}t_{1}^{-1}t_{2}^{i}=x_{i}$  can be replaced by the relations $$\{t_{1}x_{i}t_{1}^{-1}=r_{i,1}, t_{2}r_{i,2}t_{2}^{-1}=r_{i,1},\dots, t_{2}x_{i}t_{2}^{-1}=r_{i,i}\}.$$  In each such set of relations, the actions of $t_{1}, t_{2}$ can be extended to the automorphisms that are defined by the cycles $(x_{i}\hspace{2mm} r_{i,1} \hspace{2mm}r_{i,2} \cdots r_{i,i} )$ and $(x_{i} \hspace{2mm}r_{i, i-1} \cdots r_{i,1})$ respectively. Therefore, their actions can be extended to the automorphisms defined by the permutations $$\bar{t}_{1}=\prod_{i=1}^{n}(x_{i}\hspace{2mm} r_{i,1}\hspace{2mm} r_{i,2} \cdots r_{i,i} ) \hspace{2mm}\text{ and }\hspace{2mm} \bar{t}_{2}=\prod_{i=1}^{n}(x_{i}\hspace{2mm} r_{i, i-1} \cdots r_{i,1})$$ which are mutually commutative. 
	
	Hence, since $F_{n}$ and $\Lambda=\langle t_{1}, t_{2} |[t_{1},t_{2}]\rangle$ are right-angled Artin groups, by Lemma \ref{Lemma1lin}, the group $G$ is $\mathbb{Z}-$linear. 
\end{proof}

\begin{theorem}
	Let $G$ be a multiple HNN-extension with base a finitely generated free abelian group  $K$ and associated cyclic subgroups with a presentation 
	$$G=\langle S_{K},t_{1},\dots,t_{n}\hspace{1mm} | \hspace{1mm} R_{K}, [t_{i},w_{i}]=1\rangle$$
	where $w_{i}$, are words in $S_{K}$ for $i\in \{1,\dots,n\}$. Then $G$ is $\mathbb{Z}-$linear.
\end{theorem}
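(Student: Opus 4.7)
The strategy is to adapt the technique from the Corollary and the Proposition above: pass to a suitable finite-index subgroup $N$ of $G$, use Tietze transformations to reformulate the relations in a shape to which Lemma \ref{Lemma2lin} applies, and then conclude that $G$ is $\mathbb{Z}$-linear because $\mathbb{Z}$-linearity is preserved under finite extensions.

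Concretely, I would first expand each $w_i = k_1^{a_{i,1}}\cdots k_r^{a_{i,r}}$ in the basis $S_K$, let $d_i = \gcd_j(a_{i,j})$, and set $D = \operatorname{lcm}(d_1,\ldots,d_n)$.  The homomorphism $\phi \colon G \to K/K^D$ defined by $k \mapsto [k]$ and $t_i \mapsto 0$ is well defined, since the image of each relator $[t_i, w_i]$ vanishes in the abelian target.  Let $N = \ker\phi$; it has index $D^r$ in $G$.  By Reidemeister--Schreier, $N$ is presented by generators $\{k_j^D\} \cup \{t_i^{(c)}\}$, where $t_i^{(c)} := c\,t_i\,c^{-1}$ and $c$ runs over coset representatives of $K^D$ in $K$ (many of these collapse, since $t_i^{(c)} = t_i^{(c')}$ whenever $c^{-1}c' \in \langle w_i\rangle\cdot K^D$).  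Conjugating $[t_i, w_i]=1$ by $c$ and using that $K$ is abelian, one obtains after rewriting relations of the form $t_i^{(c)}\,u = u\,t_i^{(c')}$, where $u = c w_i c'^{-1} \in K^D$ and $c'$ is the coset representative of $c w_i$; in particular the derived commutator relation $[t_i^{(c)}, w_i^{D/d_i}]=1$ holds in $N$, with $w_i^{D/d_i}$ a primitive element of $K^D$ (its coordinates in $\{k_j^D\}$ are the reduced tuple $(a_{i,j}/d_i)_j$).

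The next step is to apply Tietze transformations and, where possible, a basis change of $K^D$ to turn each $w_i^{D/d_i}$ into a basis element: when this succeeds every relation in $N$ is a commutator between two generators, $N$ is a right-angled Artin group, and we are done by the known $\mathbb{Z}$-linearity of RAAGs.  When a simultaneous basis change fails -- for instance with $K = \mathbb{Z}^2$ and the $w_i$ including $k_1$, $k_2$, and $k_1 k_2$, so that the primitive vectors do not jointly extend to any basis of $K^D$ -- I would follow the pattern of the Corollary and Proposition above: introduce auxiliary Tietze generators $r_{i,c,\ell}$ so that each $t_i^{(c)}$ acts on the enlarged generating set as a finite-order permutation, and choose finite-order automorphisms $\bar{f}_{i,c}$ of the resulting enlarged free abelian group that fix the corresponding $w_i^{D/d_i}$ (for instance by cyclically permuting basis vectors whose exponents in $w_i^{D/d_i}$ coincide).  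The plan is then to arrange that these $\bar{f}_{i,c}$ mutually commute (to activate Lemma \ref{Lemma1lin}) or at least generate a finite subgroup of the automorphism group (to activate Lemma \ref{Lemma2lin}); in either case $N$ is $\mathbb{Z}$-linear, and so is $G$.

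The main obstacle is precisely this last step: producing the auxiliary Tietze generators and the finite-order automorphisms $\bar{f}_{i,c}$ in the general case, and certifying that the whole collection generates a finite subgroup of $\operatorname{Aut}(K^D)$.  For the situations already handled in the Corollary, the Proposition, and the preceding Theorem, this step is transparent because the unfolded cyclic structure is explicit; in the general case, where several $w_i$ interact and their primitive vectors $(a_{i,j}/d_i)_j$ fail to jointly form part of any basis of $K^D$, one must orchestrate the permutations so that they do not produce an infinite subgroup of $\operatorname{Aut}(K^D)$, and this combinatorial check is the heart of the argument.
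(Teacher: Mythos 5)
Your proposal stalls exactly where you say it does, and the missing step is not a routine ``combinatorial check'' but the central idea of the proof. Throughout, you keep the $t_i$ as the stable letters acting on a free abelian base, and you try to force each associated subgroup $\langle w_i\rangle$ (or $\langle w_i^{D/d_i}\rangle$ after passing to $N$) to become a basis element. As your own example $w_1=k_1$, $w_2=k_2$, $w_3=k_1k_2$ in $\mathbb{Z}^2$ shows, this cannot be done simultaneously, and no choice of finite-order automorphisms of an enlarged free abelian group fixing the $w_i$ repairs it: Lemmas \ref{Lemma1lin} and \ref{Lemma2lin} require each $A_i$ to be a \emph{subset of the standard generating set} of the base RAAG, and a word such as $k_1k_2$ never becomes such a subset by permuting basis vectors with equal exponents; Tietze-adding $w_i$ itself as a generator destroys the RAAG shape of the base, since the relation $W_i=k_1k_2$ is not a commutator. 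The preliminary passage to $N=\ker(G\to K/K^D)$ does not address this difficulty either, as it leaves the shape of the offending relations unchanged.

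The paper's proof resolves the problem by reversing the roles of base and stable letters. Writing $w_1=k_{\kappa_1}^{\nu_{\kappa_1}}\cdots k_{\kappa_p}^{\nu_{\kappa_p}}$, it unfolds $[t_1,w_1]=1$ letter by letter: Tietze generators $r_1,\dots,r_{(\sum_i\nu_{\kappa_i})-1}$ are introduced as the successive conjugates of $t_1$ by the single letters of $w_1$, so that the one relation becomes a chain $k^{\epsilon_{\kappa_p}}_{\kappa_p}t_1k^{-\epsilon_{\kappa_p}}_{\kappa_p}=r_1$, $k^{\epsilon_{\kappa_p}}_{\kappa_p}r_1k^{-\epsilon_{\kappa_p}}_{\kappa_p}=r_2$, and so on, closing up back at $t_1$. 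After doing this for every $i$, the group is presented with base the \emph{free} group $F$ on $\{t_1,\dots,t_n\}\cup\{r_{i,\ell}\}$ and with the generators $k_j$ of the free abelian group $K$ playing the role of the stable letters. Each $k_j$ now sends generators of $F$ to generators of $F$; its action extends to a product of disjoint cycles, hence a finite-order automorphism of $F$, and these automorphisms mutually commute because cycles attached to different $w_i$ move disjoint sets of generators while cycles attached to the same $w_i$ are the same cycle or its inverse. Since $F$ and $K$ are both right-angled Artin groups, Lemma \ref{Lemma1lin} applies directly. This role reversal is precisely the device your outline lacks; without it the orchestration of permutations you defer to the end cannot be carried out.
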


\begin{proof}
	Notice that $G$ can be considered as a graph of groups $(\mathcal{G},\Gamma)$ where $\Gamma$ is the rose graph $R_{n}$, $K$ is the vertex group, $G_{e_{i}}=\langle c_{i} \rangle$ are the cyclic edge groups and $\alpha_{e_{i}}$, $\omega_{e_{i}}$ are the monomorphisms from each $G_{e_{i}}$ to $K$ such that $\alpha_{e_{i}}(G_{e_{i}})=\omega_{e_{i}}(G_{e_{i}})$ for all $i\in \{1,\dots,n\}$. 
	Thus we have the monomorphisms 
	\begin{center}
		$ \alpha_{e_{i}}: \begin{array}{rcl}
			\langle c_{i} \rangle& \rightarrowtail& K
			\\ c_{i}  & \mapsto & \alpha_{e_{i}}(c_{i})
		\end{array} $ \hspace{5mm}  $ \omega_{e_{i}}: \begin{array}{rcl}
			\langle c_{i} \rangle& \rightarrowtail& K
			\\ c_{i}  & \mapsto & \omega_{e_{i}}(c_{i})
			
		\end{array} $ \end{center}	
	If $\alpha_{e_{i}}(c_{i})=\omega_{e_{i}}(c_{i})=w_{i}$ a word in the generating set $S_{K}$ of $K$. Therefore $G$ has a presentation $$G=\langle S_{K},t_{1},\dots,t_{n}\hspace{1mm} | \hspace{1mm} R_{K}, t_{i}w_{i}t_{i}^{-1}=w_{i}\rangle, i\in \{1,\dots,n\}$$
	and is the fundamental group of the graph of groups \\
	\begin{center}
		\begin{tikzpicture}
			\begin{polaraxis}[grid=none, axis lines=none]
				\addplot[mark=none,domain=0:360,samples=200] {cos(x*3)};
			\end{polaraxis}
			\node at (3.8,4) (nodeP) {\large{$K$}};
			\fill (3.43cm,3.43cm) circle (1.5pt); 
			\node at (5.7,4.3) (nodeQ) {\small{$[t_{1},w_{1}]=1$}};
			\node at (1.02,4.5) (nodeR) {\small{$[t_{2},w_{2}]=1$}};
			\node at (0.9,2.4) (nodeS) {\small{$[t_{3},w_{3}]=1$}};
			\fill (4.73cm,2.38cm) circle (1.5pt);
			\fill (4.28cm,2.13cm) circle (1.5pt);
			\fill (3.73cm,1.93cm) circle (1.5pt);
		\end{tikzpicture}
	\end{center}
	\vspace{2mm}
	\par Now let $w_{1}=k_{\kappa_{1}}^{\nu_{\kappa_{1}}}k_{\kappa_{2}}^{\nu_{\kappa_{2}}}\dots k_{\kappa_{p}}^{\nu_{\kappa_{p}}}$,   $w_{2}=k_{\lambda_{1}}^{\nu_{\lambda_{1}}}k_{\lambda_{2}}^{\nu_{\lambda_{2}}}\dots k_{\lambda_{q}}^{\nu_{\lambda_{q}}}$ etc.
	The relation $[t_{1},w_{1}]=1$ of the presentation can now be written as $$k_{\kappa_{1}}^{\nu_{\kappa_{1}}}k_{\kappa_{2}}^{\nu_{\kappa_{2}}}\dots k_{\kappa_{p}}^{\nu_{\kappa_{p}}}t_{1}k_{\kappa_{p}}^{-\nu_{\kappa_{p}}}\dots k_{\kappa_{2}}^{-\nu_{\kappa_{2}}} k_{\kappa_{1}}^{-\nu_{\kappa_{1}}}=t_{1} \Leftrightarrow $$ $
	\underbrace{k^{\epsilon_{\kappa_{1}}}_{\kappa_{1}}\cdots k^{\epsilon_{\kappa_{1}}}_{\kappa_{1}}}_{\nu_{\kappa_{1}} \text{times}}\dots  \underbrace{k^{\epsilon_{\kappa_{p}}}_{\kappa_{p}}\cdots k^{\epsilon_{\kappa_{p}}}_{\kappa_{p}}}_{\nu_{\kappa_{p}} \text{times}}t_{1} \underbrace{k^{-\epsilon_{\kappa_{p}}}_{\kappa_{p}}\cdots k^{-\epsilon_{\kappa_{1}}}_{\kappa_{p}}}_{\nu_{\kappa_{p}} \text{times}}\dots\underbrace{t^{-\epsilon_{\kappa_{1}}}_{\kappa_{1}}\cdots k^{-\epsilon_{\kappa_{1}}}_{\kappa_{1}}}_{\nu_{\kappa_{1}} \text{times}} =t_{1} $, where $\epsilon_{\kappa_{i}}\in \{\pm 1\}$. 	\vspace{2mm}

	Let
	
	\begin{minipage}{.3\textwidth}	$\begin{array}{rcl}
			r_{1} & \defeq & k^{\epsilon_{\kappa_{p}}}_{\kappa_{p}}t_{1}k_{\kappa_{p}}^{-\epsilon_{\kappa_{p}}}
			\\ r_{2} &\defeq & k^{\epsilon_{\kappa_{p}}}_{\kappa_{p}}r_{1}k_{\kappa_{p}}^{-\epsilon_{\kappa_{p}}}\\ & \vdots & \\ 
			r_{\nu_{\kappa_{p}}} &\defeq &	 k^{\epsilon_{\kappa_{p}}}_{\kappa_{p}}r_{\nu_{\kappa_{p}}-1}k_{\kappa_{p}}^{-\epsilon_{\kappa_{p}}}
		\end{array} $
		
	\end{minipage}%
	\begin{minipage}{.1\textwidth}
		\centering
		$\dots$
	\end{minipage}%	
	\begin{minipage}{.4\textwidth}
		$\begin{array}{rcl}
			r_{ (\sum_{i=2}^{p} \nu_{\kappa_{i}}) +1} & \defeq & k^{\epsilon_{\kappa_{1}}}_{\kappa_{1}}	r_{\sum_{i=2}^{p} \nu_{\kappa_{i}}}k_{\kappa_{1}}^{-\epsilon_{\kappa_{1}}}
			
			\\ 	r_{ (\sum_{i=2}^{p} \nu_{\kappa_{i}}) +2} & \defeq & k^{\epsilon_{\kappa_{1}}}_{\kappa_{1}}	r_{ (\sum_{i=2}^{p} \nu_{\kappa_{i}})+1}k_{\kappa_{1}}^{-\epsilon_{\kappa_{1}}} \\ & \vdots & \\ r_{(\sum_{i=1}^{p} \nu_{\kappa_{i}})-1} & \defeq & k^{\epsilon_{\kappa_{1}}}_{\kappa_{1}}	r_{ (\sum_{i=1}^{p} \nu_{\kappa_{i}})-2}k_{\kappa_{1}}^{-\epsilon_{\kappa_{1}}}
		\end{array} $
	\end{minipage}%	

	\vspace{3mm}
	\par 
	
	Therefore, using Tietze transformations we add the elements $r_{1}, r_{2},\dots,r_{ (\sum_{i=1}^{p} \nu_{\kappa_{i}})-1}$ in the presentation of $G$ as generators and then the relation $[w_{1},t_{1}]$ will be replaced by the set of the above relations and the relation $k^{\epsilon_{\kappa_{1}}}_{\kappa_{1}}	r_{ (\sum_{i=1}^{p} \nu_{\kappa_{i}})-1}k_{\kappa_{1}}^{-\epsilon_{\kappa_{1}}}=t_{1}$.\\ Let $R_{1}$ be the set of these relations. Similarly, let
	\begin{figure}[h]
		\begin{minipage}{.3\textwidth}		$\begin{array}{rcl}
				s_{1} & \defeq & k^{\epsilon_{\lambda_{q}}}_{\lambda_{q}}t_{2}k_{\lambda_{q}}^{-\epsilon_{\lambda_{q}}}
				\\ s_{2} &\defeq & k^{\epsilon_{\lambda_{p}}}_{\lambda_{q}}s_{1}k_{\lambda_{q}}^{-\epsilon_{\lambda_{q}}}\\ & \vdots & \\ 
				s_{\nu_{\lambda_{q}}} &\defeq & k^{\epsilon_{\lambda_{q}}}_{\lambda_{q}}s_{\nu_{\lambda_{pq}}-1}k_{\lambda_{q}}^{-\epsilon_{\lambda_{q}}}
			\end{array} $
			
		\end{minipage}%
		\begin{minipage}{.1\textwidth}
			\centering
			$\dots$
		\end{minipage}%	
		\begin{minipage}{.4\textwidth}
			$\begin{array}{rcl}
				s_{ (\sum_{i=2}^{q} \nu_{\lambda_{i}}) +1} & \defeq & k^{\epsilon_{\lambda_{1}}}_{\lambda_{1}}	s_{\sum_{i=2}^{q} \nu_{\lambda_{i}}}k_{\lambda_{1}}^{-\epsilon_{\lambda_{1}}}
				
				\\ 	s_{ (\sum_{i=2}^{q} \nu_{\lambda_{i}}) +2} & \defeq & k^{\epsilon_{\lambda_{1}}}_{\lambda_{1}}	s_{ (\sum_{i=2}^{q} \nu_{\lambda_{i}})+1}k_{\lambda_{1}}^{-\epsilon_{\lambda_{1}}} \\ & \vdots & \\ s_{(\sum_{i=1}^{q} \nu_{\lambda_{i}})-1} & \defeq & k^{\epsilon_{\lambda_{1}}}_{\lambda_{1}}	s_{ (\sum_{i=1}^{q} \nu_{\lambda_{i}})-2}k_{\lambda_{1}}^{-\epsilon_{\lambda_{1}}}
			\end{array} $
		\end{minipage}%	
	\end{figure}
	\vspace{2mm}
	
	By using Tietze transformations we can add the elements $s_{1}, s_{2},\dots,s_{ (\sum_{i=1}^{q} \nu_{\lambda_{i}})-1}$ in the presentation of $G$ as generators and then the relation $[w_{2},t_{2}]$ will be replaced by the set of the above relations and the relation $k^{\epsilon_{\lambda_{1}}}_{\lambda_{1}}	r_{ (\sum_{i=1}^{q} \nu_{\lambda_{i}})-1}k_{\lambda_{1}}^{-\epsilon_{\lambda_{1}}}=t_{2}$. Let $R_{2}$ be the set of these relations. We do the same for the rest of the relations $[w_{i},t_{i}]=1$, $i=3,\dots,n$.

	Let $F$ be the free group on the generators $$\{t_{1},\dots,t_n,r_{i},s_{j},\dots\},\hspace{2mm} i\in\{1,\dots,(\sum_{i=1}^{p} \nu_{\kappa_{i}})-1\},\hspace{1mm} j\in\{1,\dots,(\sum_{i=1}^{q} \nu_{\lambda_{i}})-1\}\hspace{2mm} \text{etc}.$$
	Then $G$ has a presentation $$G=\langle S_{F}, S_{K} \hspace{1mm} | \hspace{1mm} R_{K},R_{1},\dots,R_{n} \rangle$$
	\par  In each set $R_{i}$ the actions of $k_{i}$ can be extended to the same permutation or to permutations that are mutually inverse. Indeed, in $R_{1}$ we can extend the action of each $k_{\kappa_{j}}\in w_{1}$ to the cyclic permutation  $$\bar{f}_{\kappa_{j}}=(t_{1}\hspace{2mm} r_{1}\hspace{2mm} r_{2} \cdots r_{ (\sum_{i=1}^{p} \nu_{\kappa_{i}})-1})$$ if $\epsilon_{\kappa_{j}}=1$, while if $\epsilon_{\kappa_{j}}=-1$ we can extend the action of $k_{\kappa_{j}}$ to be the cyclic permutation $$\bar{f}_{\kappa_{j}}=(r_{ (\sum_{i=1}^{p} \nu_{\kappa_{i}})-1} \cdots r_{2} \hspace{2mm} r_{1} \hspace{2mm}  t_{1})$$ which is an automorphism (in both cases).

	Similarly we extend for the rest of $R_{i}$, $i=2,\dots,m$ (whenever $k_{\kappa_{j}}\in w_{i}$) and therefore we will end up in an automorphism that is a product of disjoint cycles of the above form (one cycle for each $w_{i}$ that contains $k_{\kappa_{j}}$). The same holds for all $k\in S_{K}$ and therefore every generator of $K$ can be extended to an automorphism $\bar{f}$ of $F$ such that  $\bar{f}_{i}, \bar{f}_{j}$ commute for all $i,j$. Hence $G$ satisfies the conditions of Lemma \ref{Lemma1lin} and thus is $\mathbb{Z}-$linear.
\end{proof}
\subsection{The case of the Star graph}
At first, we will  prove the linearity of star graph of groups with cyclic edge groups where the group associated to the central vertex is either a free abelian group using Lemma \ref{Lemma1lin} or a free group using Lemma \ref{Lemma2lin} respectively. 
\begin{definition}
	Let $G$ be a free abelian group with free generating set $S=\{s_{1},s_{2},\dots,s_{n}\}$ (i.e. $rank(G)=|S|$) and let $w$ be a word in $S$. We say that $w$ is \emph{primitive} if it can be extended to a new free generating set $S'$ for $G$.
\end{definition}
\begin{remark}
	If $w=s_{k_{1}}^{\nu_{k_{1}}}s_{k_{2}}^{\nu_{k_{2}}}\dots s_{k_{m}}^{\nu_{k_{m}}}$ and $gcd(\nu_{k_{1}},\nu_{k_{1}}, \dots,\nu_{\kappa_{m}})=1$ then there is a matrix over $\mathbb{Z}$ with first row $[\nu_{k_{1}} \nu_{k_{1}} \dots \nu_{\kappa_{m}}]$ and determinant $1$ and thus $w$ can be
	extended to a new free generating set, i.e $w$ is primitive.  (see \cite{Newman})
\end{remark}

\begin{theorem}\label{prop1}
	Let ($\mathcal{G},S_{m}$)  be the star graph of groups with free abelian vertex
	groups and cyclic edge groups. For each edge $e_{i}$
	let $\alpha_{e_{i}}$
	and $\omega_{e_{i}}$ be the monomorphisms of the edge group to the central and the external vertex group respectively.
	Assume that $\omega_{e_{i}}(k)$ is a primitive word for all $i=1,\dots,m$. Then the fundamental
	group of the graph of groups $G=\pi_{1}(\mathcal{G},S_{m})$ is $\mathbb{Z}-$linear.
	
\end{theorem}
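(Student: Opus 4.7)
The plan is to reduce the star graph situation to Lemma \ref{Lemma1lin}, extending the Tietze-transformation template from the preceding rose graph theorem but carrying along the additional commutation relations coming from the external free abelian vertex groups. The fundamental group of the star graph splits as an iterated amalgamated free product $G = K *_{\langle k_1\rangle} V_1 *_{\langle k_2\rangle} V_2 \cdots *_{\langle k_m\rangle} V_m$, where $K$ is the central vertex group and $V_i$ the external vertex group at $e_i$. Since $\omega_{e_i}(k)$ is primitive in $V_i$, one can extend it to a free basis of $V_i$, writing $V_i = \langle \omega_{e_i}(k)\rangle \times \langle v_{i,1},\ldots,v_{i,n_i-1}\rangle$; identifying $\omega_{e_i}(k)$ with $w_i := \alpha_{e_i}(k) \in K$ yields the presentation
\[
G = \langle S_K, \{v_{i,j}\} \mid R_K,\ [v_{i,j},v_{i,l}]=1 \text{ (for the same } i),\ [v_{i,j},w_i]=1\rangle.
\]

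Next, apply the Tietze procedure from the rose graph theorem to each relation $[v_{i,j},w_i]=1$ independently. Writing $w_i = k_{\kappa_1}^{\nu_{\kappa_1}}\cdots k_{\kappa_p}^{\nu_{\kappa_p}}$ and $N_i = \sum_l \nu_{\kappa_l}$, introduce new generators $r_{i,j,1},\ldots,r_{i,j,N_i-1}$ so that the commutator relation is replaced by a chain of conjugation relations under which each $k\in S_K$ appearing in $w_i$ acts by shift-by-one on consecutive positions of the cycle $\mathcal{C}_{i,j}:=\{v_{i,j},r_{i,j,1},\ldots,r_{i,j,N_i-1}\}$. Let $F$ be the RAAG on the enlarged set $\{v_{i,j}, r_{i,j,a}\}$ whose commutation graph places an edge between two generators exactly when they share the first index $i$ and occupy the same position in their cycles, i.e.\ $[v_{i,j},v_{i,l}]=1$ and $[r_{i,j,a},r_{i,l,a}]=1$ for $j\neq l$, and no other commutations.

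For each $k \in S_K$, extend the partial permutation coming from the Tietze relations to an automorphism $\bar{f}_k$ of $F$ by declaring $k$ to act as the full cyclic shift by $\epsilon_{k,i}\in\{\pm 1\}$ on every cycle $\mathcal{C}_{i,j}$ with $k\in w_i$, and as the identity on all other cycles. This is an automorphism of the RAAG $F$, since the shift sends the ``position-$a$ clique'' of the $i$-block to the ``position-$(a+\epsilon_{k,i})$ clique'' and hence preserves the commutation graph, and it clearly has finite order. Moreover, any two such $\bar{f}_k, \bar{f}_{k'}$ commute: on each common cycle they are cyclic shifts, hence lie in a common cyclic group, and they commute trivially on cycles on which at most one of them acts non-trivially. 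Lemma \ref{Lemma1lin}, applied with base RAAG $F$ and the free abelian group $K$ in the role of the RAAG $\Lambda$ of stable letters, then shows that $G$ is $\mathbb{Z}$-linear.

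The main obstacle is the verification that the RAAG structure imposed on $F$ is consistent with $G$: namely, that the relations $[r_{i,j,a},r_{i,l,a}]=1$ (which must be imposed on $F$ in order for each $\bar{f}_k$ to be a graph automorphism) are already derivable from the defining relations of $G$. This is the case because, for a fixed $i$ and a fixed position $a$, the Tietze definitions express $r_{i,j,a} = u_a v_{i,j} u_a^{-1}$ and $r_{i,l,a} = u_a v_{i,l} u_a^{-1}$ with the \emph{same} $u_a \in K$ (the common prefix of $w_i$ of length $a$), so $[r_{i,j,a},r_{i,l,a}] = u_a [v_{i,j},v_{i,l}] u_a^{-1} = 1$ in $G$. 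Once this compatibility is established, the presentation of $G$ coincides with that of the group to which Lemma \ref{Lemma1lin} applies, and the conclusion follows.
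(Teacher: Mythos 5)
Your proposal is correct and follows essentially the same route as the paper: extend each primitive word $\omega_{e_i}(k)$ to a basis of the external vertex group, use Tietze transformations to unfold each relation $[v_{i,j},w_i]=1$ into a chain of conjugations defining new generators, observe that the commutators $[r_{i,j,a},r_{i,l,a}]=1$ are consequences of $[v_{i,j},v_{i,l}]=1$ conjugated by the common prefix, extend the resulting partial permutations to commuting finite-order automorphisms of the enlarged RAAG, and invoke Lemma \ref{Lemma1lin} with the central free abelian group in the role of $\Lambda$. Your explicit verification of the compatibility of the imposed commutation relations with $G$ matches the paper's remark that $[a_i,a_j]=1$ forces $[r_{i,k},r_{j,k}]=1$.
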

\begin{proof}
	Let $K_{0}$ be the group associated to the central vertex and $K_{0},K_{1},\dots,K_{m}$ the
	groups associated to the external vertices (leaves). Let $S_{K_{0}}=\{t_{1}, t_{2},\dots,t_{N_{0}}\}$ be
	a free generating set of $K_{0}$. Since $\omega_{e_{i}}(k)$  are primitive words we can extend these
	words to a new generating sets for each $K_{i}$. Let 
	\vspace{3mm}
	\begin{itemize}
		\item[] $a_{0}\defeq \omega_{e_{1}}(k)$ and $\{a_{0},a_{1},\dots,a_{N_{1}}\}$ the new  generating set for $K_{1}$
		\item[] $b_{0}\defeq \omega_{e_{2}}(k)$ and $\{b_{0},b_{1},\dots,b_{N_{2}}\}$ the new  generating set for $K_{2}$
		%	\item[] $c_{0}\defeq \omega_{e_{3}}(k)$ and $\{c_{0},c_{1},\dots,c_{N_{2}}\}$ the new minimal generating set for $K_{3}$
		\vspace*{-3mm}
		\item[] \hspace{2cm}$\vdots$ \vspace*{-2mm}
		\item[] $z_{0}\defeq \omega_{e_{m}}(k)$ and $\{z_{0},z_{1},\dots,z_{N_{m}}\}$ the new  generating set for $K_{m}$
	\end{itemize} \vspace{2mm}
	
	Let $w_{1}\defeq\alpha_{e_{1}}(k)$, $w_{2}\defeq\alpha_{e_{2}}(k),\dots$, $w_{m}\defeq\alpha_{e_{m}}(k)$ be the words in the generating set $\{t_{1}, t_{2},\dots,t_{N_{0}}\}$ of $K_{0}$. Then $G$ is the fundamental group of the following star of groups. 
	\vspace{3mm}
	\begin{center}
		
		\begin{tikzpicture}[scale=1.11]

			\draw[thick,->] (0,-0.2) -- (2.9,0.2) node[midway,sloped,above] {\footnotesize{$w_{2}=b_{0}$}};
			\draw[thick,->] (0,-0.2) -- (1.7,2) node[midway,sloped,above] {\footnotesize{$w_{1}=a_{0}$}};
			\draw[thick,->] (0,-0.2) -- (1.4,-2.6) node[midway,sloped,below] {\footnotesize{$w_{m}=z_{0}$}};
			\fill (0cm,-0.2cm) circle (1.5pt); 
			\fill (2.9cm,0.2cm) circle (1.5pt); 
			\fill (1.7cm,2cm) circle (1.5pt); 
			\fill (1.4cm,-2.6cm) circle (1.5pt); 
			\node at (-0.4,-0.3) (nodeP) {\large{$K_{0}$}};
			\node at (3.4,0.1) (nodeP) {\large{$K_{2}$}};
			\node at (2.2,2.1) (nodeP) {\large{$K_{1}$}};
			\node at (1.9,-2.7) (nodeP) {\large{$K_{m}$}};
			\fill (1.2cm,-1.2cm) circle (1pt); 
			\fill (1.4cm,-0.85cm) circle (1pt); 
			\fill (1.55cm,-0.5cm) circle (1pt); 
		\end{tikzpicture}
		
	\end{center}
	\vspace{5mm}

	\par Now let $w_{1}=t_{\kappa_{1}}^{\nu_{\kappa_{1}}}t_{\kappa_{2}}^{\nu_{\kappa_{2}}}\dots t_{\kappa_{p}}^{\nu_{\kappa_{p}}}$,   $w_{2}=t_{\lambda_{1}}^{\nu_{\lambda_{1}}}t_{\lambda_{2}}^{\nu_{\lambda_{2}}}\dots t_{\lambda_{q}}^{\nu_{\lambda_{q}}}$ etc.
	Since $a_{0}=w_{1}$ by using Tietze transformations we can remove $a_{0}$ from the generating set of $G$ after we replace it by $t_{\kappa_{1}}^{\nu_{\kappa_{1}}}t_{\kappa_{2}}^{\nu_{\kappa_{2}}}\dots t_{\kappa_{p}}^{\nu_{\kappa_{p}}}$ in all relations of the form $[a_{0},a_{i}]$. Therefore we have relations of the form $$t_{\kappa_{1}}^{\nu_{\kappa_{1}}}t_{\kappa_{2}}^{\nu_{\kappa_{2}}}\dots t_{\kappa_{p}}^{\nu_{\kappa_{p}}}a_{i}t_{\kappa_{p}}^{-\nu_{\kappa_{p}}}\dots t_{\kappa_{2}}^{-\nu_{\kappa_{2}}} t_{\kappa_{1}}^{-\nu_{\kappa_{1}}}=a_{i} \Leftrightarrow $$ $$
	\underbrace{t^{\epsilon_{\kappa_{1}}}_{\kappa_{1}}\cdots t^{\epsilon_{\kappa_{1}}}_{\kappa_{1}}}_{\nu_{\kappa_{1}} \text{times}}\dots  \underbrace{t^{\epsilon_{\kappa_{p}}}_{\kappa_{p}}\cdots t^{\epsilon_{\kappa_{p}}}_{\kappa_{p}}}_{\nu_{\kappa_{p}} \text{times}}a_{i} \underbrace{t^{-\epsilon_{\kappa_{p}}}_{\kappa_{p}}\cdots t^{-\epsilon_{\kappa_{p}}}_{\kappa_{p}}}_{\nu_{\kappa_{p}} \text{times}}\dots\underbrace{t^{-\epsilon_{\kappa_{1}}}_{\kappa_{1}}\cdots t^{-\epsilon_{\kappa_{1}}}_{\kappa_{1}}}_{\nu_{\kappa_{1}} \text{times}} =a_{i},\text{  where }\epsilon_{\kappa_{i}}\in \{\pm 1\}.$$

	\vspace{5mm}
	
	Let
	
	\begin{minipage}{.3\textwidth}		$\begin{array}{rcl}
			r_{i,1} & \defeq & t^{\epsilon_{\kappa_{p}}}_{\kappa_{p}}a_{i}t_{\kappa_{p}}^{-\epsilon_{\kappa_{p}}}
			\\ r_{i,2} &\defeq & t^{\epsilon_{\kappa_{p}}}_{\kappa_{p}}r_{i,1}t_{\kappa_{p}}^{-\epsilon_{\kappa_{p}}}\\ & \vdots & \\ 
			r_{i,\nu_{\kappa_{p}}} &\defeq & t^{\epsilon_{\kappa_{p}}}_{\kappa_{p}}r_{i,\nu_{\kappa_{p}}-1}t_{\kappa_{p}}^{-\epsilon_{\kappa_{p}}}
		\end{array} $
		
	\end{minipage}%
	\begin{minipage}{.1\textwidth}
		\centering
		$\dots$
	\end{minipage}%	
	\begin{minipage}{.4\textwidth}
		$\begin{array}{rcl}
			r_{i, (\sum_{i=2}^{p} \nu_{\kappa_{i}}) +1} & \defeq & t^{\epsilon_{\kappa_{1}}}_{\kappa_{1}}	r_{i, \sum_{i=2}^{p} \nu_{\kappa_{i}}}t_{\kappa_{1}}^{-\epsilon_{\kappa_{1}}}
			
			\\ 	r_{i, (\sum_{i=2}^{p} \nu_{\kappa_{i}}) +2} & \defeq & t^{\epsilon_{\kappa_{1}}}_{\kappa_{1}}	r_{i, (\sum_{i=2}^{p} \nu_{\kappa_{i}})+1}t_{\kappa_{1}}^{-\epsilon_{\kappa_{1}}} \\ & \vdots & \\ r_{i, (\sum_{i=1}^{p} \nu_{\kappa_{i}})-1} & \defeq & t^{\epsilon_{\kappa_{1}}}_{\kappa_{1}}	r_{i, (\sum_{i=1}^{p} \nu_{\kappa_{i}})-2}t_{\kappa_{1}}^{-\epsilon_{\kappa_{1}}}
		\end{array} $
	\end{minipage}%	

	\vspace{5mm}
	
	\par 
	
	By using Tietze transformations we add the elements $r_{i,1}, r_{i,2},\dots,r_{i, (\sum_{i=1}^{p} \nu_{\kappa_{i}})-1}$ in the presentation of $G$ as generators and then the relations $[a_{0},a_{i}]$ will be replaced by the sets of the above relations and the relations $t^{\epsilon_{\kappa_{1}}}_{\kappa_{1}}	r_{i, (\sum_{i=1}^{p} \nu_{\kappa_{i}})-1}t_{\kappa_{1}}^{-\epsilon_{\kappa_{1}}}=a_{i}$, for all $i=1,\dots,N_{1}$. Denote this set of relations by $R_{1}$. Notice here that if $i,j\in\{1,\dots,N_{1}\}$ we have that  $[a_{i},a_{j}]=1$ which means that we also have $[r_{i,k},r_{j,k}]=1$ for $k=1,\dots,r_{i, (\sum_{i=1}^{p} \nu_{\kappa_{i}})-1}$. Let $\bar{R}_{K_{1}}$ be the set of relations $$\bar{R}_{K_{1}}=\{[a_{i},a_{j}],[r_{i,k},r_{j,k}]\}$$

	Similarly, since $b_{0}=t_{\lambda_{1}}^{\nu_{\lambda_{1}}}t_{\lambda_{2}}^{\nu_{\lambda_{2}}}\dots t_{\lambda_{q}}^{\nu_{\lambda_{q}}}$ we can remove the generator $b_{0}$ from the presentation of $G$, add new generators $s_{i,1}, s_{i,2},\dots,s_{i, (\sum_{i=1}^{q} \nu_{\lambda_{i}})-1}$ and replace the relations  $[b_{0},b_{i}]$ by a set of relations $R_{2}$ of the above form for all $i=1,\dots,N_{2}$. Let $\bar{R}_{K_{2}}=\{[b_{i},b_{j}],[s_{i,k},s_{j,k}]\}$. We work in the same way for the rest of the groups $K_{3},\dots,K_{m}$ and define the sets of relations $R_{i}$ and $\bar{R}_{K_{i}}$ for $i=3,\dots,m$.  \par
	
	We define $K$ to be the group with generating set $$S_{K}=\{a_{1},\dots,a_{N_{1}},b_{1},\dots,b_{N_{2}},\dots,r_{i,1}, r_{i,2},\dots,r_{i, (\sum_{i=1}^{p} \nu_{\kappa_{i}})-1}, s_{i,1}, s_{i,2},\dots,s_{i, (\sum_{i=1}^{q} \nu_{\lambda_{i}})-1},\dots\}$$ (i.e. the generators of $K_{1},K_{2}\dots,K_{m}$ except for the generators $a_{0},b_{0}$ etc, as well as the new generators we added in the presentation of $G$ in the above procedure) and set of relations $$R_{K}=\{\bar{R}_{K_{1}},\dots,\bar{R}_{K_{m}}\}.$$ Then $G$ has a presentation $$G=\langle t_{1},\dots,t_{N_{0}}, S_{K} | R_{K},R_{1},\dots,R_{m} \rangle$$ 
	
	\par  Notice here, that in each set $R_{i}$ the actions of $t_{i}$ can be extended to the same permutation or to permutations that are mutually inverse. Indeed, in $R_{1}$ we can extend the action of each $t_{\kappa_{j}}\in w_{1}$ to the permutation  $$\bar{f}_{\kappa_{j}}=\prod_{i=1}^{N_{1}}(a_{i} \hspace{2mm}r_{i,1} \hspace{2mm}r_{i,2} \cdots r_{i, (\sum_{i=1}^{p} \nu_{\kappa_{i}})-1})$$   if  $\epsilon_{\kappa_{j}}=1$, (that is a product of disjoint cycles),  while if $\epsilon_{\kappa_{j}}=-1$ we can extend the action of $t_{\kappa_{j}}$ to be the permutation $$\bar{f}_{\kappa_{j}}=\prod_{i=1}^{N_{1}}(r_{i, (\sum_{i=1}^{p} \nu_{\kappa_{i}})-1} \cdots r_{i,2} \hspace{2mm} r_{i,1} \hspace{2mm}  a_{i}).$$ In both cases this is an automorphism  because of the relations $\bar{R}_{K_{1}}$. Similarly we extend for the rest of $R_{i}$, $i=2,\dots,m$ (whenever $t_{\kappa_{j}}\in w_{i}$) and therefore we end up in an automorphism that is a product of disjoint permutations of the above form (one permutation for each $w_{i}$ that contains $t_{\kappa_{j}}$). Hence,  $\bar{f}_{i}, \bar{f}_{j}$ commute, for all $i,j\in \{1,\dots,N_{0}\}$ and therefore $G$ satisfies the conditions of Lemma \ref{Lemma1lin} and thus is $\mathbb{Z}-$linear.
\end{proof}
\begin{remark}
	Notice now that the fact that the groups $K_{1},\dots, K_{m}$ are free abelian is used initially in order to extend the primitive words $\omega_{e_{i}}(k)$ to new generating sets for each group. After this step, we can apply the procedure described above in the more general case where $K_{1},\dots,K_{m}$ are right-angled Artin groups. Therefore we can state Theorem \ref{prop1} to the following more general form.
\end{remark}

\begin{corollary}
	Let ($\mathcal{G},S_{m}$) be the star graph of groups with a free abelian group
	associated to the central vertex, right-angled Artin groups associated to the external
	vertices and cyclic edge groups. For each edge $e_{i}$
	let  $\alpha_{e_{i}}$
	and $\omega_{e_{i}}$ be the monomorphisms of the edge group to the central and the external vertex group respectively.
	Assume that $\omega_{e_{i}}$ is a generator (or can be extended to a new RAAG generating
	set) of the terminal vertex group for all $i=1,\dots,m$. Then the fundamental group
	of the graph of groups $G=\pi_{1}(\mathcal{G},S_{m})$ is $\mathbb{Z}-$linear.
	
\end{corollary}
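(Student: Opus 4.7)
The plan is to follow the proof of Theorem \ref{prop1} essentially verbatim; the preceding remark already anticipates the generalization, and the role of primitivity in that proof is now played directly by the hypothesis that $\omega_{e_i}(k)$ extends to a RAAG generating set of $K_i$.

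First, for each external vertex group $K_i$, use the hypothesis to choose a RAAG generating set of the form $\{a_0^{(i)} = \omega_{e_i}(k), a_1^{(i)}, \ldots, a_{N_i}^{(i)}\}$, and let $w_i = \alpha_{e_i}(k)$ be the word it is identified with in the free abelian generators $\{t_1, \ldots, t_{N_0}\}$ of the central vertex group $K_0$. Using Tietze transformations on the presentation of $G = \pi_1(\mathcal{G}, S_m)$, eliminate each $a_0^{(i)}$ by substituting $w_i$. The surviving RAAG relations of $K_i$ that originally involved $a_0^{(i)}$, namely $[a_0^{(i)}, a_j^{(i)}] = 1$ for those $j$ in the link of $a_0^{(i)}$ in the defining graph of $K_i$, then take the form $[w_i, a_j^{(i)}] = 1$. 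Exactly as in Theorem \ref{prop1}, introduce auxiliary generators $r_{i,k}^{(j)}$ by successive conjugation of $a_j^{(i)}$ by the letters of $w_i$, converting each such commutator into a chain of HNN-style conjugation relations. The action of each $t_\ell$ then extends to a permutation of the augmented generating set that is a product of disjoint cycles $(a_j^{(i)}\ r_{i,1}^{(j)}\ r_{i,2}^{(j)}\cdots)$, and because $K_0$ is abelian, these permutations mutually commute. Setting $K$ to be the RAAG on the augmented generating set with commutation relations inherited from the $K_i$, apply Lemma \ref{Lemma1lin} to conclude that $G$ is $\mathbb{Z}$-linear.

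The main obstacle lies in fixing the commutation graph of $K$ so that two compatibility conditions hold simultaneously: each $\bar{f}_\ell$ must preserve the commutation relations of $K$ (so that it defines a RAAG automorphism), and each cycle of $\bar{f}_\ell$ must generate a free abelian subgroup of $K$ (the hypothesis of Lemma \ref{Lemma1lin}). The first is handled cycle-by-cycle since conjugation in the ambient group is always an automorphism, so the commutation $[a_j^{(i)}, a_l^{(i)}] = 1$ inherited from $K_i$ forces $[r_{i,k}^{(j)}, r_{i,k}^{(l)}] = 1$ in parallel along each cycle. The second requires declaring commutations between each $a_j^{(i)}$ and its own conjugates $r_{i,k}^{(j)}$ within $K$, exactly as is implicit in the free abelian case of Theorem \ref{prop1}. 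Once this natural choice of commutation graph is fixed, the remainder of the argument — verifying injectivity of the embedding of $G$ into the larger group and invoking Lemma \ref{Lemma1lin} — is formal and identical to Theorem \ref{prop1}.
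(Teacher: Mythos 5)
Your proposal is correct and follows essentially the same route as the paper, which proves this corollary simply by observing (in the preceding remark) that the argument of Theorem \ref{prop1} goes through verbatim once the RAAG generating-set hypothesis replaces primitivity. Your explicit discussion of how to fix the commutation graph of the augmented group $K$ so that the cycles of each $\bar{f}_\ell$ generate free abelian subgroups (as required by the setup of Lemma \ref{Lemma1lin}) addresses a point the paper leaves implicit, and is a welcome clarification rather than a deviation.
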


\vspace{2mm}
We will now prove the theorem in the case where the group associated to the central vertex is a free group.
\begin{theorem}\label{propo2}
	Let ($\mathcal{G},S_{m}$)  be the star graph of groups with a free group
	associated to the central vertex, free abelian groups associated to the external vertices and cyclic edge groups. For each edge $e_{i}$ let $\alpha_{e_{i}}$ and $\omega_{e_{i}}$ be the monomorphisms
	of the edge group to the central and the external vertex group respectively. Assume that $\omega_{e_{i}}(k)$ is a primitive word for all $i=1,\dots,m$. Then the fundamental group of the graph of groups   $G=\pi_{1}(\mathcal{G},S_{m})$ is $\mathbb{Z}-$linear.
\end{theorem}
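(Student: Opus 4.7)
The plan is to follow the blueprint of Theorem \ref{prop1} almost verbatim, but invoke Lemma \ref{Lemma2lin} in place of Lemma \ref{Lemma1lin} at the final step. The only structural change is that the central vertex group is now free rather than free abelian, so the stable letters $t_1,\dots,t_{N_0}$ are no longer required to commute, and the finite-order automorphism hypothesis of Lemma \ref{Lemma2lin} replaces the commuting-automorphism hypothesis of Lemma \ref{Lemma1lin}.

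First, I would extend each primitive word $\omega_{e_i}(k)$ to a free generating set of $K_i$, calling the distinguished generator $a_0^{(i)}$, and write $\alpha_{e_i}(k)=w_i$ as a word in the free basis $\{t_1,\dots,t_{N_0}\}$ of the central group, so that the amalgamation relation becomes $a_0^{(i)}=w_i$. Using Tietze transformations I would eliminate each $a_0^{(i)}$ by substitution, converting every relation $[a_0^{(i)},a_j^{(i)}]=1$ into a long conjugation relation $w_i a_j^{(i)} w_i^{-1}=a_j^{(i)}$. Then, exactly as in Theorem \ref{prop1}, I would introduce new intermediate generators $r_{i,j,\ell}$ one syllable of $w_i$ at a time, breaking each long relation into a chain of single-syllable conjugations $t_\kappa^{\pm 1}\, r_{i,j,\ell-1}\, t_\kappa^{\mp 1}=r_{i,j,\ell}$ closed off by an identification with the original $a_j^{(i)}$.

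Collecting the surviving generators from $K_1,\dots,K_m$ together with all the added $r_{i,j,\ell}$, I would let $K$ be the group they present, with relations consisting of the commutators inherited from each $K_i$ and those transported to the $r$-generators by the chain construction. By design every defining relation of $K$ is a commutator between two generators, so $K$ is a right-angled Artin group. Moreover, each $t_\kappa$ acts by conjugation on $S_K$ as a permutation: on each chain of $r$-generators associated to an occurrence of $t_\kappa$ in some $w_i$ it acts by the disjoint cyclic permutation exhibited in the proof of Theorem \ref{prop1}, and it fixes every generator whose defining chain does not involve $t_\kappa$. By the same verification as in Theorem \ref{prop1}, this permutation $\bar f_\kappa$ preserves the RAAG commutation structure on $K$ and hence lies in $\mathrm{Aut}(K)$.

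At this point the presentation of $G$ has exactly the shape required by Lemma \ref{Lemma2lin}: a RAAG base $K$, free stable letters $t_1,\dots,t_{N_0}$ with no imposed relations, and partial actions extending to automorphisms $\bar f_1,\dots,\bar f_{N_0}$ of $K$. Because $S_K$ is finite, the subgroup $\langle\bar f_1,\dots,\bar f_{N_0}\rangle$ is a subgroup of the finite symmetric group on $S_K$, hence a finite subgroup of $\mathrm{Aut}(K)$; note that this step is where the argument diverges from Theorem \ref{prop1}, since we no longer need the $\bar f_\kappa$ to commute, only to generate a finite group, which is automatic. Lemma \ref{Lemma2lin} then delivers the $\mathbb{Z}$-linearity of $G$.

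The main obstacle, as in Theorem \ref{prop1}, is the bookkeeping required to verify that the prescribed permutations of the enlarged generating set actually preserve every commutation relation of $K$, so that they are bona fide automorphisms. This reduces to checking, relation by relation, that whenever $[x,y]=1$ holds in $K$ then $[\bar f_\kappa(x),\bar f_\kappa(y)]=1$ is already among the relations $\bar R_{K_i}$ introduced along with the $r$-generators—precisely the computation carried out in the proof of Theorem \ref{prop1}, now unencumbered by any need to compare $\bar f_\kappa$ and $\bar f_{\kappa'}$.
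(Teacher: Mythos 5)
Your overall architecture is exactly the paper's: convert the amalgamation relations into syllable-by-syllable conjugation chains as in Theorem \ref{prop1}, observe that the resulting partial actions of $t_1,\dots,t_{N_0}$ extend to permutations of a finite generating set and hence generate a finite subgroup of $\mathrm{Aut}(K)$, and close with Lemma \ref{Lemma2lin} instead of Lemma \ref{Lemma1lin}. However, there is one concrete step where your argument, as written, would fail, and it is precisely the step on which the paper's proof of Theorem \ref{propo2} spends most of its effort. In the free central group the words $w_i$ need not have pairwise distinct letters (the paper's phrase ``$t_i$ are now not necessarily discrete''), and in particular $w_i$ need not be cyclically reduced. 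If, say, $w_1=t_{1}t_{2}t_{1}^{-1}$, the naive chain gives $r_{j,1}=t_{1}^{-1}a_{j}t_{1}$, $r_{j,2}=t_{2}r_{j,1}t_{2}^{-1}$ and the closing relation $t_{1}r_{j,2}t_{1}^{-1}=a_{j}$; then conjugation by $t_{1}$ sends \emph{both} $r_{j,1}$ and $r_{j,2}$ to $a_{j}$, so the partial action is not injective and cannot be extended to a permutation $\bar{f}_{1}$ at all (indeed the chain collapses in the group). Your claim that ``each $t_\kappa$ acts by conjugation on $S_K$ as a permutation'' is therefore false without a preprocessing step.

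The paper repairs this by first peeling off the non-cyclically-reduced part: it introduces auxiliary generators $p_{i,1},p_{i,2},\dots$ equal to the conjugates of $a_{i}$ by the initial cancelling prefix, reducing $[w_{1},a_{i}]=1$ to a chain of single-syllable conjugations terminating in $[\bar{w}_{1},p_{i,r}]=1$ with $\bar{w}_{1}$ cyclically reduced, and then runs the chain construction on $\bar{w}_{1}$; cyclic reducedness is exactly what guarantees that the several partial shifts contributed by repeated occurrences of the same letter $t_{\kappa}$ within one chain are jointly injective and so extend to a single permutation. (Alternatively one could note that replacing $\alpha_{e_i}$ by its composition with an inner automorphism of the central free group does not change $\pi_{1}(\mathcal{G},S_{m})$, so one may assume each $w_i$ is cyclically reduced from the start; but some such reduction must be stated.) With that step supplied, the rest of your outline matches the paper's proof.
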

\begin{proof}
	Let $F$ be the free group on the generators $\{t_{1}, t_{2},\dots,t_{N_{0}}\}$ that is associated to
	the central vertex and let $K_{1},\dots,K_{m}$ be the groups associated to the external vertices
	(leaves).
	Using the same procedure we described in the proof of \ref{prop1} we may assume that
	$G$ is the fundamental group of the following star of groups. 
	\vspace{2mm}
	\begin{center}
		
		\begin{tikzpicture}[scale=1.11]

			\draw[thick,->] (0,-0.2) -- (2.9,0.2) node[midway,sloped,above] {\footnotesize{$w_{2}=b_{0}$}};
			\draw[thick,->] (0,-0.2) -- (1.7,2) node[midway,sloped,above] {\footnotesize{$w_{1}=a_{0}$}};
			\draw[thick,->] (0,-0.2) -- (1.4,-2.6) node[midway,sloped,below] {\footnotesize{$w_{m}=z_{0}$}};
			\fill (0cm,-0.2cm) circle (1.5pt); 
			\fill (2.9cm,0.2cm) circle (1.5pt); 
			\fill (1.7cm,2cm) circle (1.5pt); 
			\fill (1.4cm,-2.6cm) circle (1.5pt); 
			\node at (-0.4,-0.3) (nodeP) {\large{$F$}};
			\node at (3.4,0.1) (nodeP) {\large{$K_{2}$}};
			\node at (2.2,2.1) (nodeP) {\large{$K_{1}$}};
			\node at (1.9,-2.7) (nodeP) {\large{$K_{m}$}};
			\fill (1.2cm,-1.2cm) circle (1pt); 
			\fill (1.4cm,-0.85cm) circle (1pt); 
			\fill (1.55cm,-0.5cm) circle (1pt); 
		\end{tikzpicture}
		
	\end{center}
	\vspace{2mm}
	where $w_{i}$, $i\in\{1,\dots,m\}$ are free words in $\{t_{1}, t_{2},\dots,t_{N_{0}}\}$.
	
	\par Now let $w_{1}=t_{\kappa_{1}}^{\nu_{\kappa_{1}}}t_{\kappa_{2}}^{\nu_{\kappa_{2}}}\dots t_{\kappa_{p}}^{\nu_{\kappa_{p}}}$,   $w_{2}=t_{\lambda_{1}}^{\nu_{\lambda_{1}}}t_{\lambda_{2}}^{\nu_{\lambda_{2}}}\dots t_{\lambda_{q}}^{\nu_{\lambda_{q}}}$ etc,  where $t_{i}$ are now not necessarily discrete.
	Since $a_{0}=w_{1}$ by using Tietze transformations we can remove $a_{0}$ from the generating set of $G$ after we replace it by $t_{\kappa_{1}}^{\nu_{\kappa_{1}}}t_{\kappa_{2}}^{\nu_{\kappa_{2}}}\dots t_{\kappa_{p}}^{\nu_{\kappa_{p}}}$ in all relations of the form $[a_{0},a_{i}]$. Therefore we will have relations of the form $t_{\kappa_{1}}^{\nu_{\kappa_{1}}}t_{\kappa_{2}}^{\nu_{\kappa_{2}}}\dots t_{\kappa_{p}}^{\nu_{\kappa_{p}}}a_{i}t_{\kappa_{p}}^{-\nu_{\kappa_{p}}}\dots t_{\kappa_{2}}^{-\nu_{\kappa_{2}}} t_{\kappa_{1}}^{-\nu_{\kappa_{1}}}=a_{i}$. Assume now that $w_{1}$ is not cyclically reduced, that is $t_{\kappa_{1}}^{\nu_{\kappa_{1}}}=t_{\kappa_{p}}^{-\nu_{\kappa_{p}}}$. Thus we have 
	$$t_{\kappa_{1}}^{\nu_{\kappa_{1}}}t_{\kappa_{2}}^{\nu_{\kappa_{2}}}\dots t_{\kappa_{1}}^{-\nu_{\kappa_{1}}}a_{i}t_{\kappa_{1}}^{\nu_{\kappa_{1}}}\dots t_{\kappa_{2}}^{-\nu_{\kappa_{2}}} t_{\kappa_{1}}^{-\nu_{\kappa_{1}}}=a_{i}\Leftrightarrow$$
	$$t_{\kappa_{2}}^{\nu_{\kappa_{2}}}\dots t_{\kappa_{1}}^{-\nu_{\kappa_{1}}}a_{i}t_{\kappa_{1}}^{\nu_{\kappa_{1}}}\dots t_{\kappa_{2}}^{-\nu_{\kappa_{2}}} =t_{\kappa_{1}}^{-\nu_{\kappa_{1}}}a_{i}t_{\kappa_{1}}^{\nu_{\kappa_{1}}}$$
	We define $p_{i,1}$ to be  $p_{i,1}=t_{\kappa_{1}}^{-\nu_{\kappa_{1}}}a_{i}t_{\kappa_{1}}^{\nu_{\kappa_{1}}}$. Then the above relation can be replaced by the following two relations.
	$$t_{\kappa_{2}}^{\nu_{\kappa_{2}}}\dots t_{\kappa_{p}-1}^{\nu_{\kappa_{p}-1}}p_{i,1}t_{\kappa_{p}-1}^{-\nu_{\kappa_{p}-1}}\dots t_{\kappa_{2}}^{-\nu_{\kappa_{2}}} =p_{i,1}$$
	$$t_{\kappa_{1}}^{\nu_{\kappa_{1}}}p_{i,1}t_{\kappa_{1}}^{-\nu_{\kappa_{1}}}=a_{i}$$
	We now check if $t_{\kappa_{2}}^{\nu_{\kappa_{2}}}=t_{\kappa_{p}-1}^{-\nu_{\kappa_{p}-1}}$. If this equation holds we again apply the same procedure as before. Continuing in the same same way, we end up replacing the relation $[w_{1},a_{i}]$ with a set of relations 
	$$t_{\kappa_{1}}^{\nu_{\kappa_{1}}}p_{i,1}t_{\kappa_{1}}^{-\nu_{\kappa_{1}}}=a_{i}$$
	$$t_{\kappa_{2}}^{\nu_{\kappa_{1}}}p_{i,2}t_{\kappa_{2}}^{-\nu_{\kappa_{2}}}=p_{i,1}$$
	$$\vdots$$
	$$t_{\kappa_{r}}^{\nu_{\kappa_{r}}}p_{i,r}t_{\kappa_{r}}^{-\nu_{\kappa_{r}}}=p_{i,r-1}$$
	$$[\bar{w}_{1},p_{i,r}]=1$$ where $\bar{w}_{1}$ is a cyclically reduced word. Let $\bar{w}_{1}=t_{\lambda_{1}}^{\nu_{\lambda_{1}}}t_{\lambda_{2}}^{\nu_{\lambda_{2}}}\dots t_{\lambda_{q}}^{\nu_{\lambda_{q}}}=t_{\kappa_{r+1}}^{\nu_{\kappa_{r+1}}}t_{\kappa_{r+2}}^{\nu_{\kappa_{r+2}}}\dots t_{\kappa_{r+q}}^{\nu_{\kappa_{r+q}}}$\hspace{2mm}$(1)$. Then we can replace the relation $[\bar{w}_{1},p_{i,r}]=1$ with the following set of relations. 
	$$t_{\kappa_{1}}^{\nu_{\lambda_{q}}}p_{i,r}t_{\lambda_{q}}^{-\nu_{\lambda_{q}}}=p_{i,r+1}$$
	$$t_{\lambda_{q-1}}^{\nu_{\lambda_{q-1}}}p_{i,r+1}t_{\lambda_{q-1}}^{-\nu_{\lambda_{q-1}}}=p_{i,r+2}$$
	$$\vdots$$
	$$t_{\lambda_{1}}^{\nu_{\lambda_{1}}}p_{i,r+q-1}t_{\lambda_{1}}^{-\nu_{\lambda_{1}}}=p_{i,r}$$
	One can easily check (using equality $(1)$ and the fact that $\bar{w}_{1}$ is cyclically reduced) that the previous two sets of relations are mutually consistent.

	Now using the same procedure described in Theorem \ref{prop1} we can add generators $q_{i,j}$ in order to replace the above relations by a set of relations that are actions of the stable letters $t_{1}, t_{2},\dots,t_{N_{0}}$ on the generators of the right-angled Artin group $\bar{K}_{1}$ with generating set $S_{\bar{K}_{1}}=\{a_{i},p_{i,j},q_{j}\}$. These actions can be extended to automorphisms of $\bar{K}_{1}$ that are defined by permutations on $S_{\bar{K}_{1}}$ and thus they generate a finite subgroup of a Symmetric group. 
	
	Working in the same way for the rest of the vertex groups $K_{i}$, $i\in \{2,\dots,m\}$ we can end up in a group with stable letters $t_{1}, t_{2},\dots,t_{N_{0}}$ whose action on the generators of the group $\bar{K}=\langle \bar{K}_{1},\bar{K}_{2},\dots,\bar{K}_{m}\rangle$ can be extended to automorphisms defined by permutations (which are products of disjoint cycles) and thus generate a finite subgroup of a symmetric group. Therefore, using Lemma \ref{Lemma2lin} we have that $G$ is $\mathbb{Z}-$linear.
\end{proof}

We can again generalize the above proposition to the following.
\begin{corollary}
	Let ($\mathcal{G},S_{m}$)  be the star graph of groups with a free group associated to the central vertex, right-angled Artin groups associated to the external
	vertices and cyclic edge groups. For each edge $e_{i}$  let $\alpha_{e_{i}}$ and  $\omega_{e_{i}}(k)$ be the monomorphisms of the edge group to the central and the external vertex group respectively. Assume that $\omega_{e_{i}}(k)$ is a generator (or can be extended to a new RAAG generating set) for all $i=1,\dots,m$. Then the fundamental group of the graph of groups   $G=\pi_{1}(\mathcal{G},S_{m})$ is $\mathbb{Z}-$linear.
\end{corollary}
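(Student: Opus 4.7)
The plan is to follow essentially the same strategy as in Theorem \ref{propo2}, exploiting the fact that the free abelian hypothesis on the external vertex groups was used only to extend the primitive word $\omega_{e_{i}}(k)$ to a full free generating set of each $K_{i}$. Under the present hypothesis that $\omega_{e_{i}}(k)$ either is itself a standard RAAG generator of $K_{i}$ or can be extended to a full RAAG generating set of $K_{i}$, that preparatory step is given for free, and the rest of the argument needs only to be carried through while keeping careful track of which pairs of generators are required to commute.

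For each external vertex I would fix a RAAG generating set $\{a_{0}^{(i)},a_{1}^{(i)},\dots,a_{N_{i}}^{(i)}\}$ of $K_{i}$ with $a_{0}^{(i)}=\omega_{e_{i}}(k)$, and write $w_{i}=\alpha_{e_{i}}(k)$ as a word in the free generating set $\{t_{1},\dots,t_{N_{0}}\}$ of the central free group $F$. The amalgamation relations read $w_{i}=a_{0}^{(i)}$, so Tietze transformations let me delete each $a_{0}^{(i)}$ from the presentation, rewriting each surviving RAAG commutation relation $[a_{0}^{(i)},a_{j}^{(i)}]=1$ (whenever $a_{0}^{(i)}$ and $a_{j}^{(i)}$ are adjacent in the defining graph of $K_{i}$) as a relation $[w_{i},a_{j}^{(i)}]=1$. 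The commutation relations not involving $a_{0}^{(i)}$ are simply retained.

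Next, for each relation $[w_{i},a_{j}^{(i)}]=1$, I would apply the iterative breakdown used in the proof of Theorem \ref{propo2}: first introduce auxiliary generators $p_{i,j,s}$ that strip off the non-cyclically-reduced boundary of $w_{i}$, then introduce generators $r_{i,j,s}$ (one per letter of the cyclically reduced core of $w_{i}$) so that the original relation is replaced by a chain of conjugation relations of the form $t_{\kappa}^{\pm 1}(\text{gen})t_{\kappa}^{\mp 1}=(\text{next gen})$. Because the surviving RAAG relations among the $a_{j}^{(i)}$ with $j\neq 0$ are preserved, at every level indexed by $s$ the new generators inherit the same commutation pattern, and the collection of all original and auxiliary generators defines a new RAAG $\bar{K}$ whose defining graph is a finite blow-up of the defining graphs of $K_{1},\dots,K_{m}$.

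To finish, I would verify, as in Theorems \ref{prop1} and \ref{propo2}, that the action of each $t_{\kappa}$ extends to an automorphism of $\bar{K}$ defined by a permutation of the generating set that is a product of disjoint cycles, with one cycle per index $i$ such that $t_{\kappa}$ occurs in $w_{i}$. These permutations have finite order and, being finite in number, generate a finite subgroup of $\mathrm{Aut}(\bar{K})$, after which Lemma \ref{Lemma2lin} delivers $\mathbb{Z}$-linearity of $G$. The step I expect to demand the most care is checking that the constructed permutations genuinely preserve all commutation relations of $\bar{K}$, rather than merely permuting its generating set; equivalently, I must track that the auxiliary generators $p_{i,j,s}$ and $r_{i,j,s}$ introduced at each level commute in exactly the pattern dictated by the original defining graphs of the $K_{i}$, so that the extensions are honest graph automorphisms and hence genuine elements of $\mathrm{Aut}(\bar{K})$.
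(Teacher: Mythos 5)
Your proposal follows the same route the paper intends for this corollary: the paper gives no separate proof but relies on the observation (made explicitly in the remark after Theorem \ref{prop1}) that the free abelian hypothesis on the leaves is used only to extend $\omega_{e_i}(k)$ to a generating set, after which the machinery of Theorem \ref{propo2} — Tietze elimination of $a_0^{(i)}$, the chain of auxiliary conjugation generators, extension of the $t_\kappa$-actions to finite-order permutations, and Lemma \ref{Lemma2lin} — applies verbatim. Your added attention to the fact that only the $a_j^{(i)}$ adjacent to $a_0^{(i)}$ in the defining graph yield relations $[w_i,a_j^{(i)}]=1$, and that the resulting permutations must be honest graph automorphisms of the blown-up RAAG, correctly identifies the only point where the RAAG case differs from the free abelian one.
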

\begin{definition}
	The \emph{diameter} of a tree is defined as the number of vertices in the longest path between external vertices (leaves) of a tree.
\end{definition}
Notice that if the diameter of a tree is less or equal to 3 we have a star graph. Notice also that in a tree of diameter 4 there exists a unique edge that connects internal vertices (i.e. vertices that are not leaves) and it is a bridge for the graph, that is, if we remove it, it disconnects the graph. By assigning to the vertices incident with the bridge the role that the central vertex played in the star-graph, using the same arguments, we can obtain the following result.
\begin{corollary}\label{prop2}
	Let ($\mathcal{G},T$) be a tree of groups with free abelian vertex groups and cyclic edge groups, where $T$ has diameter equal to 4. For each edge $e_{i}$  that connects a vertex incident to the bridge to an external vertex let $\alpha_{e_{i}}$ and  $\omega_{e_{i}}(k)$  be the monomorphism from the edge group to the initial vertex group that corresponds to a vertex of the bridge and to the terminal vertex group that corresponds to an external vertex respectively. Assume that $\alpha_{e}(k)$,  $\omega_{e}(k)$ and $\omega_{e_{i}}(k)$ are primitive words in their respective vertex groups. Then the fundamental group of the graph of groups   $G=\pi_{1}(\mathcal{G},T)$ is $\mathbb{Z}-$linear.
\end{corollary}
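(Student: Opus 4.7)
The plan is to reduce the statement directly to Theorem \ref{prop1} by collapsing the two bridge-incident internal vertex groups into a single free abelian central vertex. A tree of diameter $4$ has precisely two non-leaf vertices $u, v$ joined by the unique bridge edge $e$, with the remaining vertices (leaves) attached on either side of the bridge. We denote the associated vertex groups by $K_u, K_v$ and the external vertex groups by $L_1, \dots, L_m$.

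First, since $\alpha_e(k)$ is primitive in $K_u$ and $\omega_e(k)$ is primitive in $K_v$, by the remark following the definition of primitivity we may extend each to a new free generating set of its ambient vertex group. After this change of basis, $\alpha_e(k)$ and $\omega_e(k)$ are each basis elements on their respective sides, and the sole relation imposed by the bridge edge is their identification. Consequently the amalgamated free product
\[
\tilde K \;:=\; K_u \ast_{\langle k \rangle} K_v
\]
is again a finitely generated free abelian group, with basis obtained by taking the union of the two chosen bases and identifying the two distinguished generators.

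With $\tilde K$ in hand, $G = \pi_1(\mathcal{G},T)$ is visibly the fundamental group of a star graph of groups $(\tilde{\mathcal G}, S_m)$ whose central vertex group is $\tilde K$, whose external vertex groups are $L_1, \dots, L_m$, whose edge groups are the original cyclic edge groups $\langle k \rangle$, and whose edge monomorphisms are inherited from the original tree (the maps $\alpha_{e_i}$ are now regarded as landing in $\tilde K$). Crucially, the hypothesis that $\omega_{e_i}(k)$ is primitive in each $L_i$ transfers verbatim, so the star $(\tilde{\mathcal G}, S_m)$ satisfies all the hypotheses of Theorem \ref{prop1}. Applying that theorem yields the $\mathbb{Z}$-linearity of $G$. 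The one step requiring genuine verification is the claim that amalgamating two free abelian groups along a primitive cyclic subgroup of each is again free abelian; this is the main obstacle but it follows from the universal property of the direct sum once the distinguished generators have been aligned as basis elements on both sides, which is precisely what the primitivity hypothesis on $\alpha_e(k)$ and $\omega_e(k)$ provides.
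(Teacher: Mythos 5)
The reduction to a single star collapses at its key step: the amalgamated free product $\tilde K = K_u \ast_{\langle k\rangle} K_v$ is \emph{not} free abelian unless one of the two factors coincides with the image of the edge group. After the change of basis that primitivity provides, $\tilde K$ has presentation
$\langle t_{0},t_{1},\dots,t_{r},\bar t_{1},\dots,\bar t_{s} \mid [t_{i},t_{j}]=1,\ [\bar t_{i},\bar t_{j}]=1,\ [t_{0},\bar t_{j}]=1\rangle$,
with no relation $[t_{i},\bar t_{j}]=1$ for $i,j\geq 1$; indeed, by the normal form theorem for amalgamated products, any $a\in K_u\setminus\langle k\rangle$ and $b\in K_v\setminus\langle k\rangle$ fail to commute. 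The universal property you invoke gives the pushout in the category of \emph{abelian} groups (the quotient $K_u\oplus K_v$ by the identification of the two copies of $k$); the amalgamated free product is the pushout in the category of \emph{groups}, which is strictly larger here. So $\tilde K$ is a right-angled Artin group — the join of two complete graphs along the single vertex $t_{0}$ — but not free abelian, and Theorem \ref{prop1}, whose hypothesis requires a free abelian central vertex group, does not apply to the star you construct. This is a genuine gap, not a presentational one.

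The statement can be rescued along lines close to yours, but you must descend to the \emph{proof} of Theorem \ref{prop1} rather than cite its statement. This is what the paper does: it keeps the two bridge vertices separate, identifies $t_{0}$ with $\bar t_{0}$ by a Tietze transformation, runs the rewriting procedure of Theorem \ref{prop1} on each of the two stars, and then observes that the stable letters $t_{0},t_{1},\dots,t_{r},\bar t_{1},\dots,\bar t_{s}$ generate exactly the RAAG $\tilde K$ above while the associated permutation automorphisms are mutually commutative. Lemma \ref{Lemma1lin} explicitly allows the stable letters to generate an arbitrary right-angled Artin group $\Lambda$, and that extra generality is precisely what makes the diameter-$4$ case go through. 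Your intuition about collapsing the bridge is the right geometric picture; the fix is to recognize the collapsed central group as a RAAG and feed it into Lemma \ref{Lemma1lin} directly, instead of asserting it is free abelian.
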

\begin{proof}
	Let $K_{0}$ and $\bar{K}_{0}$ be the groups associated to the vertices incident to the bridge $e$ and let  $K_{1},\dots,K_{n}$ be the  groups associated to the external vertices connected to $K_{0}$ and $\bar{K}_{1},\dots,\bar{K}_{m}$ be the  groups associated to the external vertices connected to $\bar{K}_{0}$. 
	Using the procedure described in the proof of \ref{prop1} where we extend all primitive words to new minimal generating sets  we may assume that
	$G$ is the fundamental group of the following graph of groups, where $S_{K_{0}}=\{t_{0},t_{1},\dots,t_{r}\}$ and $S_{\bar{K}_{0}}=\{\bar{t}_{0},\bar{t}_{1},\dots,\bar{t}_{s}\}$. 
	\vspace{2mm}
	\begin{center}
		
		\begin{tikzpicture}[scale=1.25]
			\draw[thick,-] (0,-0.2) -- (-2.9,-0.2) node[midway,sloped,above] {\footnotesize{$t_{0}=\bar{t}_{0}$}};
			\fill (-2.9,-0.2) circle (1.5pt);
			\node at (-2.6,-0.5) (nodeP) {\large{${K}_{0}$}};
			\draw[thick,->] (-2.9,-0.2) -- (-4.5,2) node[midway,sloped,above] {\footnotesize{${a}_{0}={w}_{1}$}};
			\fill (-4.5,2) circle (1.5pt); 
			\node at (-4.9,2.1) (nodeP) {\large{${K}_{1}$}};
			\draw[thick,->] (-2.9,-0.2) -- (-5.6,0.2) node[midway,sloped,above] {\footnotesize{${b}_{0}={w}_{2}$}};
			\fill (-5.6,0.2) circle (1.5pt);
			\node at (-6.1,0.1) (nodeP) {\large{${K}_{2}$}};
			\draw[thick,->] (-2.9,-0.2) -- (-3.95,-2.6) node[midway,sloped,below] {\footnotesize{${z}_{0}={w}_{n}$}};
			\fill (-3.95,-2.6) circle (1.5pt);
			\node at (-4.3,-2.7) (nodeP) {\large{${K}_{n}$}};
			
			\fill (-4.1cm,-1.2cm) circle (1pt); 
			\fill (-4.2cm,-0.85cm) circle (1pt); 
			\fill (-4.35cm,-0.5cm) circle (1pt); 
			
			\draw[thick,->] (0,-0.2) -- (2.9,0.2) node[midway,sloped,above] {\footnotesize{$\bar{w}_{2}=\bar{b}_{0}$}};
			\draw[thick,->] (0,-0.2) -- (1.7,2) node[midway,sloped,above] {\footnotesize{$\bar{w}_{1}=\bar{a}_{0}$}};
			\draw[thick,->] (0,-0.2) -- (1.4,-2.6) node[midway,sloped,below] {\footnotesize{$\bar{w}_{m}=\bar{z}_{0}$}};
			\fill (0cm,-0.2cm) circle (1.5pt); 
			\fill (2.9cm,0.2cm) circle (1.5pt); 
			\fill (1.7cm,2cm) circle (1.5pt); 
			\fill (1.4cm,-2.6cm) circle (1.5pt); 
			\node at (-0.2,-0.5) (nodeP) {\large{$\bar{K}_{0}$}};
			\node at (3.4,0.1) (nodeP) {\large{$\bar{K}_{2}$}};
			\node at (2.2,2.1) (nodeP) {\large{$\bar{K}_{1}$}};
			\node at (1.9,-2.7) (nodeP) {\large{$\bar{K}_{m}$}};
			\fill (1.2cm,-1.2cm) circle (1pt); 
			\fill (1.4cm,-0.85cm) circle (1pt); 
			\fill (1.55cm,-0.5cm) circle (1pt); 
		\end{tikzpicture}
		
	\end{center}
	Now let $S$ be the star of groups that is defined be the vertex groups $K_{0}, K_{1},\dots, K_{n}$ and $\bar{S}$ be the star of groups that is defined be the vertex groups $\bar{K}_{0}, \bar{K}_{1},\dots, \bar{K}_{m}$. Using Tietze transformations we can remove $\bar{t}_{0}$ from the presentation of the group. Now $\bar{K}_{0}$ is the free abelian group on the generators $\{{t}_{0},\bar{t}_{1},\dots,\bar{t}_{s}\}$.  Using the procedure described in the proof of \ref{prop1} for $S$ and $\bar{S}$ we can deduce that $G$ is $\mathbb{Z}-$linear.
\end{proof}
\vspace{2mm}
In the degenerated case where the graph is a segment, the fundamental group of the graph of groups is the amalgamated free product. In that case the assumptions made for the cyclic edge groups and the monomorphisms $\alpha_{e}$ and $\omega_{e}$ in Theorem \ref{prop1} are not necessary. We will now provide a proof of that result, for which we will need the following Lemma.
\begin{lemma}\label{Ker}[see \cite{Wehrfritz}]\ \\
	Let $G$ be a group and $\phi_{1},\dots,\phi_{n}$ homomorphisms
	of $G$ to linear groups $L_{1},\dots,L_{n}$, $n\geq 2$ such that $ \bigcap_{i=1}^{n}Ker\phi_{i}=\{1\} $. Then $G$ is linear.
\end{lemma}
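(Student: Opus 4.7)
The plan is to embed $G$ diagonally into the direct product $L_{1}\times\cdots\times L_{n}$ and then to realize this finite direct product itself as a matrix group via block-diagonal representations.

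First I would define $\phi : G \to L_{1}\times\cdots\times L_{n}$ by $\phi(g) = (\phi_{1}(g),\ldots,\phi_{n}(g))$. This is clearly a group homomorphism, and its kernel equals $\bigcap_{i=1}^{n}\mathrm{Ker}\,\phi_{i}$, which is trivial by hypothesis; hence $\phi$ is injective and $G$ embeds into the direct product.

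Second, I would check that a finite direct product of linear groups is itself linear. Fixing faithful representations $L_{i}\hookrightarrow GL_{m_{i}}(K_{i})$, one chooses a common commutative ring with unity $R$ together with ring homomorphisms $K_{i}\to R$ under which each induced map $L_{i}\to GL_{m_{i}}(R)$ remains injective. The block-diagonal assignment $(g_{1},\ldots,g_{n})\mapsto \mathrm{diag}(g_{1},\ldots,g_{n})$ then embeds $L_{1}\times\cdots\times L_{n}$ into $GL_{m_{1}+\cdots+m_{n}}(R)$, and composing with $\phi$ gives a faithful representation of $G$ in a single matrix group.

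The only real technical point is the choice of the common coefficient ring $R$. In every application of this lemma in the present paper the rings $K_{i}$ all equal $\Z$, so $R=\Z$ works immediately and the issue vanishes; in full generality one can take $R$ to be a suitable tensor product of the $K_{i}$ or a common overfield when the $K_{i}$ are integral domains, and the block-diagonal construction goes through. This is the only step in the argument that I would expect to require more than a perfunctory justification.
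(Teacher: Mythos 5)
Your argument is correct, and it is the standard proof of this fact; the paper itself offers no proof at all, simply quoting the statement from Wehrfritz's book, so you have supplied the argument the paper leaves implicit. The diagonal map $\phi=(\phi_{1},\dots,\phi_{n})$ into $L_{1}\times\cdots\times L_{n}$ with kernel $\bigcap_{i}\mathrm{Ker}\,\phi_{i}=\{1\}$ is exactly right, as is the block-diagonal embedding of the product. On the one technical point you flag: your proposed fixes (tensor products, common overfields) are the shakiest part, since for general commutative rings $K_{i}$ there need not be injective unital maps $K_{i}\to K_{1}\otimes_{\Z}\cdots\otimes_{\Z}K_{n}$ (e.g.\ rings of different positive characteristic). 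The cleaner general resolution is to take $R=K_{1}\times\cdots\times K_{n}$ and use the isomorphism $M_{m}(R)\cong M_{m}(K_{1})\times\cdots\times M_{m}(K_{n})$, sending $A\in GL_{m_{i}}(K_{i})$ to the element of $GL_{m_{i}}(R)$ that is $A$ in the $i$-th coordinate and the identity elsewhere, then forming the block-diagonal matrix; this requires no compatibility among the $K_{i}$. In the present paper all representations are over $\Z$, so, as you observe, the issue disappears and your proof goes through verbatim with $R=\Z$.
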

\begin{theorem}\label{amalfreeprod}
	Let $G$ be the amalgamated free product of the free abelian groups $G_{1}$ and $G_{2}$. Then $G$ is $\mathbb{Z}-$linear.
\end{theorem}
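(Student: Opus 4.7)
The plan is to apply Wehrfritz's Lemma~\ref{Ker} to just two homomorphisms. Write $G=G_1\ast_H G_2$ and let $\iota_i\colon H\hookrightarrow G_i$, $i=1,2$, denote the two amalgamation embeddings. The key preliminary observation is that $H$ is \emph{central} in $G$: since $G_1$ is abelian, $\iota_1(H)$ commutes with every element of $G_1$; because $\iota_1(H)=\iota_2(H)$ as a subset of $G$ and $G_2$ is also abelian, it also commutes with every element of $G_2$; hence it commutes with a generating set of $G$, so $H\subseteq Z(G)$.

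Given this, I would take $\phi_1\colon G\twoheadrightarrow G/H$ and $\phi_2\colon G\twoheadrightarrow G^{\mathrm{ab}}$. Because $H$ is central, quotienting by $H$ in the amalgamated presentation yields
\[
G/H \;\cong\; \bigl(G_1/\iota_1(H)\bigr)\ast\bigl(G_2/\iota_2(H)\bigr),
\]
a free product of finitely generated abelian groups. Each factor is $\mathbb{Z}$-linear, so by Nisnevi\v{c}'s theorem (cited in the introduction) $G/H$ is $\mathbb{Z}$-linear. The second target $G^{\mathrm{ab}}$ is a finitely generated abelian group, hence $\mathbb{Z}$-linear as well.

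The decisive step is to establish
\[
\ker\phi_1\cap\ker\phi_2 \;=\; H\cap[G,G] \;=\;\{1\},
\]
that is, that $H$ injects into $G^{\mathrm{ab}}$. I would realise $G^{\mathrm{ab}}$ as the pushout in the category of abelian groups,
\[
G^{\mathrm{ab}}\;\cong\;(G_1\oplus G_2)\big/\bigl\{(\iota_1(h),-\iota_2(h)):h\in H\bigr\},
\]
and then verify directly that the composite $H\hookrightarrow G\twoheadrightarrow G^{\mathrm{ab}}$ sending $h\mapsto[(\iota_1(h),0)]$ is injective: if $(\iota_1(h),0)=(\iota_1(h'),-\iota_2(h'))$ in $G_1\oplus G_2$ for some $h'\in H$, then $\iota_2(h')=0$ forces $h'=0$ (since $\iota_2$ is injective), after which $\iota_1(h)=0$ forces $h=0$. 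Once this injectivity is in place, Lemma~\ref{Ker} embeds $G$ into the $\mathbb{Z}$-linear group $(G/H)\times G^{\mathrm{ab}}$, yielding the $\mathbb{Z}$-linearity of $G$.

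I do not anticipate a genuine obstacle here: centrality of $H$ is exactly what trivialises what would otherwise be the main difficulty (controlling conjugation by the amalgamated subgroup), and the intersection-of-kernels check becomes transparent once $G^{\mathrm{ab}}$ is written as a pushout. The only point worth flagging is the implicit use of the $\mathbb{Z}$-linear version of Nisnevi\v{c}'s theorem for the free product of finitely generated abelian groups, which is standard.
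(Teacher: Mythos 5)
Your proof is correct, and it takes a genuinely different and noticeably more direct route than the paper's. The paper first normalizes the amalgamation by repeatedly passing to finite-index subgroups (one for each amalgamated generator) until the edge identifications have the shape $\{a_{i}\}=\{y_{i,j}^{m_{i}}\}$, re-reads the resulting finite-index subgroup $G_{N}$ as the fundamental group of a star of groups, and only then applies Lemma~\ref{Ker} to two quotients of $G_{N}$: one killing the $a_{i}$ (a free product of finitely generated abelian groups, handled by Nisnevi\v{c}) and one collapsing all outer vertex groups to a single free abelian group; $\mathbb{Z}$-linearity of $G$ then follows since it is a finite extension of $G_{N}$. You instead isolate the one structural fact that makes everything work --- the amalgamated subgroup $H$ is central because both factors are abelian --- and apply Lemma~\ref{Ker} directly to $G$ with the quotients $G/H$ and $G^{\mathrm{ab}}$; the kernel condition collapses to the injectivity of $H\to G^{\mathrm{ab}}$, which your pushout computation of $G^{\mathrm{ab}}$ verifies in two lines. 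What your version buys is the elimination of all the finite-index bookkeeping, including the paper's delicate opening claim that the two inclusions of $H$ into $G_{1}$ and $G_{2}$ can be simultaneously put in the diagonal form $\{x_{i}^{n_{i}}\}=\{y_{i}^{m_{i}}\}$ by a single choice of basis of $H$; what it shares with the paper are exactly the same two external inputs, Wehrfritz's lemma and the $\mathbb{Z}$-linearity of free products of finitely generated abelian groups. The caveat you flag about the latter is common to both arguments and is harmless here: such a free product is virtually a free product of free abelian groups and a free group, hence virtually a right-angled Artin group on a disjoint union of complete graphs, hence $\mathbb{Z}$-linear.
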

\begin{proof}
	Let $\{x_{1},\dots,x_{N_{1}}\}$ and  $\{y_{1},\dots,y_{N_{2}}\}$ be the generating sets of $G_{1}$ and $G_{2}$ respectively. By changing appropriately the generating sets we may assume that $\{x_{i}^{n_{i}}\}=\{y_{i}^{m_{i}}\}$ is the amalgamation, for $i=\{1,\dots,N\}$, where $N\leq N_{1}$ and $N\leq N_{2}$. 
	Therefore $G$ is the fundamental group of the following the graph of groups, where $i\in \{1,\dots,N\}$.
	\vspace{2mm}
	\begin{center}

		\begin{tikzpicture}[scale=1.21]

			\draw[thick,-] (1,-0.2) -- (5,-0.2);

			\fill (1cm,-0.2cm) circle (1.5pt); \fill (5cm,-0.2cm) circle (1.5pt);

			\node at (1,-0.6) (nodeB) {$\langle x_{1},\dots,x_{N_{1}}\rangle$}; \node at (5,-0.6) (nodeP) {$\langle y_{1},\dots,y_{N_{2}}\rangle$};

			\node at (3,0.1) (nodeE) {\footnotesize{$\{x_{i}^{n_{i}}\}=\{y_{i}^{m_{i}}\}$}};

		\end{tikzpicture} 
		
	\end{center}
	
	\vspace{2mm}
	Let $G_{1}$ be the subgroup of $G$ of finite index $n_{1}$ with respect to $x_{1}$.  Then $G_{1}$ can be considered as the fundamental group of the following graph of groups where all the vertex groups are free abelian and $i\in \{2,\dots,N\}$.
	\vspace{2mm}
	\begin{center}
		
		\begin{tikzpicture}[scale=1.25] 		
			\draw[thick,->] (-2,-0.2) -- (2.9,0.2) node[midway,sloped,above] {\footnotesize{$a_{1}=y_{1,2}^{m_{1}}, \{x_{i}^{n_{i}}\}=\{y_{i,2}^{m_{i}}\}$}};
			\draw[thick,->] (-2,-0.2) -- (1.7,2.7) node[midway,sloped,above] {\footnotesize{$a_{1}=y_{1,1}^{m_{1}}, \{x_{i}^{n_{i}}\}=\{y_{i,1}^{m_{i}}\}$}};
			\draw[thick,->] (-2,-0.2) -- (1.4,-2.6) node[midway,sloped,below] {\footnotesize{$a_{1}=y_{1,n_{1}}^{m_{1}}, \{x_{i}^{n_{i}}\}=\{y_{i,n_{1}}^{m_{i}}\}$}};
			\fill (-2cm,-0.2cm) circle (1.5pt); 
			\fill (2.9cm,0.2cm) circle (1.5pt); 
			\fill (1.7cm,2.7cm) circle (1.5pt); 
			\fill (1.4cm,-2.6cm) circle (1.5pt); 
			\node at (-3.5,-0.2) (nodeP) {$\langle a_{1},x_{2},\dots,x_{N_{1}}\rangle$};
			\node at (4.7,0.2) (nodeP) {$\langle y_{1,2},y_{2,2},\dots,y_{N_{2},2}\rangle$};
			\node at (3.5,2.7) (nodeP) {$\langle y_{1,1},y_{2,1},\dots,y_{N_{2},1}\rangle$};
			\node at (3.35,-2.6) (nodeP) {$\langle y_{1,n},y_{2,n},\dots,y_{N_{2},n_{1}}\rangle$};
			\fill (1.2cm,-1.2cm) circle (1pt); 
			\fill (1.4cm,-0.85cm) circle (1pt); 
			\fill (1.55cm,-0.5cm) circle (1pt); 
		\end{tikzpicture}
		
	\end{center}
	\vspace{2mm}
	
	Let $G_{2}$ be the subgroup of $G_{1}$ of finite index $n_{2}$ with respect to $x_{2}$.  Then $G_{2}$ can be considered as the fundamental group of the following graph of groups where all the vertex groups are free abelian and $i\in \{3,\dots,N\}$.
	\vspace{1mm}
	\begin{center}
		
		\begin{tikzpicture} [scale=1.1] 			
			\draw[thick,->] (-4,-0.2) -- (2.9,0.2) node[midway,sloped,above] {\footnotesize{$a_{1}=y_{1,2}^{m_{1}},a_{2}=y_{2,2}^{m_{2}}, \{x_{i}^{n_{i}}\}=\{y_{i}^{m_{i}}\}$}};
			\draw[thick,->] (-4,-0.2) -- (1.7,3.3) node[midway,sloped,above] {\footnotesize{$a_{1}=y_{1,1}^{m_{1}},a_{2}=y_{2,1}^{m_{2}}, \{x_{i}^{n_{i}}\}=\{y_{i}^{m_{i}}\}$}};
			\draw[thick,->] (-4,-0.2) -- (1.8,-2.8) node[midway,sloped,below] {\footnotesize{$a_{1}=y_{1,n_{1}}^{m_{1}},a_{2}=y_{2,n_{1}n_{2}}^{m_{2}}, \{x_{i}^{n_{i}}\}=\{y_{i}^{m_{i}}\}$}};
			\fill (-4cm,-0.2cm) circle (1.5pt); 
			\fill (2.9cm,0.2cm) circle (1.5pt); 
			\fill (1.7cm,3.3cm) circle (1.5pt); 
			\fill (1.8cm,-2.8cm) circle (1.5pt); 
			\node at (-5.6,-0.2) (nodeP) {$\langle a_{1},a_{2},x_{3}\dots,x_{N_{1}}\rangle$};
			\node at (4.7,0.2) (nodeP) {$\langle y_{1,2},y_{2,2},\dots,y_{N_{2},2}\rangle$};
			\node at (3.5,3.3) (nodeP) {$\langle y_{1,1},y_{2,1},\dots,y_{N_{2},1}\rangle$};
			\node at (3.8,-2.8) (nodeP) {$\langle y_{1,n},y_{2,n},\dots,y_{N_{2},n_{1}n_{2}}\rangle$};
			\fill (1.2cm,-1.2cm) circle (1pt); 
			\fill (1.4cm,-0.85cm) circle (1pt); 
			\fill (1.55cm,-0.5cm) circle (1pt); 
		\end{tikzpicture}
		
	\end{center}
	\vspace{5mm}

	Doing the same for the rest of the $i\in\{3,\dots,N\}$ we end up in a group $G_{N}$ which is the fundamental group of the following graph of groups. ($i\in \{1,\dots, N\}$)
	\vspace{3mm}
	\begin{center}
		
		\begin{tikzpicture}[scale=1.21]  		
			\draw[thick,->] (-1,-0.2) -- (2.9,0.2) node[midway,sloped,above] {\footnotesize{$ \{a_{i}\}=\{y_{i,2}^{m_{i}}\}$}};
			\draw[thick,->] (-1,-0.2) -- (1.7,2.2) node[midway,sloped,above] {\footnotesize{$ \{a_{i}\}=\{y_{i,1}^{m_{i}}\}$}};
			\draw[thick,->] (-1,-0.2) -- (1.4,-2.6) node[midway,sloped,below] {\footnotesize{$ \{a_{i}\}=\{{y_{i,\prod_{i=1}^{N}n_{i}}}^{m_{i}}\}$}};
			\fill (-1cm,-0.2cm) circle (1.5pt); 
			\fill (2.9cm,0.2cm) circle (1.5pt); 
			\fill (1.7cm,2.2cm) circle (1.5pt); 
			\fill (1.4cm,-2.6cm) circle (1.5pt); 
			\node at (-3.5,-0.2) (nodeP) {$\langle a_{1},\dots,a_{N},x_{N+1},\dots,x_{N_{1}}\rangle$};
			\node at (4.7,0.2) (nodeP) {$\langle y_{1,2},y_{2,2},\dots,y_{N_{2},2}\rangle$};
			\node at (3.5,2.2) (nodeP) {$\langle y_{1,1},y_{2,1},\dots,y_{N_{2},1}\rangle$};
			\node at (3.6,-2.6) (nodeP) {$\langle y_{1,n},y_{2,n},\dots,y_{N_{2},\prod_{i=1}^{N}n_{i}}\rangle$};
			\fill (1.2cm,-1.2cm) circle (1pt); 
			\fill (1.4cm,-0.85cm) circle (1pt); 
			\fill (1.55cm,-0.5cm) circle (1pt); 
		\end{tikzpicture}
		
	\end{center}
	\vspace{2mm}

	Now let $H_{1}$ be the group $\faktor{G_{N}}{\langle \{a_{i}\}_{i=1}^{N} \rangle}$ and  $H_{2}$ be the group $\faktor{G_{N}}{S^{G_{N}}}$, where $S$ is the set $\{y_{j,1}y_{j,i}^{-1} \}\cup\{y_{\alpha,\beta}\}\cup\{x_{\gamma}\}$ for $j=1,\dots,N$, $i=2,\dots,\prod_{i=1}^{N}n_{i}$, $\alpha=N+1,\dots, N_{2}$, $\beta=1,\dots,\prod_{i=1}^{N}n_{i}$ and $\gamma=N+1,\dots, N_{1}$. Notice that $\langle \{a_{i}\}_{i=1}^{N} \rangle$ is a normal subgroup of $G_{N}$.
	
	The group $H_{1}$ is isomorphic to the following free product of groups $$\mathbb{Z}^{N_{1}-N}*(*_{\prod\limits_{i=1}^{N}n_{i}}\mathbb{Z}^{N_{2}-N}\times\mathbb{Z}_{m_{1}}\times\dots,\times \mathbb{Z}_{m_{N}})$$ or as a fundamental group of the graph of groups:  
	\vspace{3mm}
	\begin{center}
		
		\begin{tikzpicture}[scale=1.21]

			\draw[thick,->] (0,-0.2) -- (2.9,0.2);
			\draw[thick,->] (0,-0.2) -- (1.7,2);
			\draw[thick,->] (0,-0.2) -- (1.6,-2.1);
			\fill (0cm,-0.2cm) circle (1.5pt); 
			\fill (2.9cm,0.2cm) circle (1.5pt); 
			\fill (1.7cm,2cm) circle (1.5pt); 
			\fill (1.6cm,-2.1cm) circle (1.5pt); 
			\node at (-0.7,-0.3) (nodeP) {\large{$\mathbb{Z}^{N_{1}-N}$}};
			\node at (5.9,0.1) (nodeP) {\large{$\mathbb{Z}^{N_{2}-N}\times\mathbb{Z}_{m_{1}}\times\dots,\times \mathbb{Z}_{m_{N}}$}};
			\node at (4.7,2.01) (nodeP) {\large{$\mathbb{Z}^{N_{2}-N}\times\mathbb{Z}_{m_{1}}\times\dots,\times \mathbb{Z}_{m_{N}}$}};
			\node at (4.6,-2.15) (nodeP) {\large{$\mathbb{Z}^{N_{2}-N}\times\mathbb{Z}_{m_{1}}\times\dots,\times \mathbb{Z}_{m_{N}}$}};
			\fill (1.2cm,-1.2cm) circle (1pt); 
			\fill (1.4cm,-0.85cm) circle (1pt); 
			\fill (1.55cm,-0.5cm) circle (1pt); 
		\end{tikzpicture}
		
	\end{center}
	\vspace{2mm}
	Since $H_{1}$ is a free product of linear groups, from the result of  Nisnevi\v c (see \cite{Nisne})   we have that $G_{1}$ is linear.
	
	On the other hand $H_{2}$ can be considered as the fundamental group of the graph of groups  
	\vspace{2mm}
	\begin{center}

		\begin{tikzpicture}

			\draw[thick,-] (0,-0.2) -- (5,-0.2);

			\fill (0cm,-0.2cm) circle (1.5pt); \fill (5cm,-0.2cm) circle (1.5pt);

			\node at (0,-0.6) (nodeB) {$\langle a_{1},\dots,a_{N}\rangle$}; \node at (5,-0.6) (nodeP) {$\langle y_{1},\dots,y_{N}\rangle$};

			\node at (2.5,0.1) (nodeE) {\footnotesize{$\{a_{i}\}=\{y_{i}^{m_{i}}\}$}};

		\end{tikzpicture} 
		\vspace{2mm}
	\end{center}
	where  $i\in \{1,\dots,N\}$. Now a presentation of $H_{2}$ is $$H_{2}=\langle a_{1},\dots,a_{N}, y_{1},\dots, y_{N}\hspace{1mm}|\hspace{1mm} [a_{i},a_{j}]=1, [y_{i},y_{j}]=1, \{a_{i}\}=\{y_{i}^{m_{i}}\} \rangle$$
	for $i,j\in \{1,\dots, N\}$. Using Tietze transformations we can remove the generators $\{a_{i}\}$ from the presentation and end up in the new presentation $$H_{2}=\langle  y_{1},\dots, y_{N}\hspace{1mm}|\hspace{1mm}  [y_{i},y_{j}]=1 \rangle.$$
	Thus $H_{2}$ is a free abelian group and hence $\mathbb{Z}$-linear.
	
	Therefore, since $\langle \{a_{i}\}_{i=1}^{N} \rangle\cap S^{G_{N}}={1}$, from Lemma \ref{Ker} we deduce that $G_{N}$, and thus $G$, is $\mathbb{Z}$-linear. 
\end{proof}

\section*{Acknowledgments}
This paper has been developed as part of my PhD research. I am deeply grateful to my supervisor, V. Metaftsis, whose insights were instrumental in the development of this manuscript.

\end{document}